\DeclareMathAlphabet{\mathpzc}{OT1}{pzc}{m}{it}
\newcounter{todocounter}
\newcommand{\todonum}[1]{\stepcounter{todocounter}\todo{\thetodocounter: #1}}
\providecommand\@dotsep{5}
\renewcommand{\listoftodos}[1][\@todonotes@todolistname]{%
  \@starttoc{tdo}{#1}}
\crefname{table}{table}{tables}
\crefname{listing}{Program-code}{Program-codes}  
\Crefname{listing}{Program-code}{Program-codes}
\crefname{subsection}{subsection}{subsections}
\theoremstyle{plain}
\newtheorem{Thm}{Theorem}[section]
\newtheorem{Cor}[Thm]{Corollary}
\newtheorem{Prop}[Thm]{Proposition}
\newtheorem{Lem}[Thm]{Lemma}
\theoremstyle{definition}
\newtheorem{Remark}[Thm]{Remark}
\numberwithin{equation}{section}
\newcommand{\Card}[1]{\left\vert #1\right\vert} 
\newcommand{\Places}{\mathcal{P}} 
\DeclareMathOperator{\zfun}{\xi} 
\DeclareMathOperator{\Lfun}{\mathcal{L}} 
\newcommand{\st}{\operatorname{\mathfrak{st}}} 
\newcommand{\coset}[1]{\left[ #1 \right]}  
\newcommand{\gen}[1]{\left\langle #1 \right\rangle}  
\newcommand{\FNorm}[1]{\left\vert #1 \right\vert} 
\newcommand{\Nm}{\operatorname{Nm}}
\newcommand{\Image}{\operatorname{Im}}
\newcommand{\Ind}{\operatorname{Ind}}
\newcommand{\Gal}{\operatorname{Gal}}
\newcommand{\Aut}{\operatorname{Aut}}
\newcommand{\sgn}{\operatorname{sgn}}
\newcommand{\Res}{\operatorname{Res}}
\newcommand{\C}{\mathbb C}
\newcommand{\A}{\mathbb{A}}
\newcommand{\Z}{\mathbb{Z}}
\newcommand{\R}{\mathbb{R}}
\newcommand{\N}{\mathbb{N}}
\newcommand{\mO}{\mathcal{O}}
\newcommand{\bk}[1]{\left(#1\right)} 
\newcommand{\bm}{\begin{multline*}}
\newcommand{\tu}{\end  {multline*}}
\DeclareMathOperator{\Id}{\mathbf{1}} 
\newcommand{\Ga}{\mathbb{G}_a} 
\newcommand{\Gm}{\mathbb{G}_m} 
\newcommand{\Eisen}{\mathcal{E}}
\DeclareMathOperator{\unif}{\varpi} 
\newcommand{\modf}[1]{\mathcal{\delta}_{#1}} 
\renewcommand{\check}[1]{#1 ^{\vee}} 
\DeclareMathOperator{\Real}{\mathfrak{Re}} 
\newcommand{\piece}[1]{\left\{\begin{matrix} #1 \end{matrix}\right.} 
\newcommand{\set}[1]{\left\{ #1 \right\}} 
\newcommand{\mvert}{\mathrel{}\middle\vert\mathrel{}} 
\newcommand{\res}[1]{\Bigg\vert_{#1}}
\newcommand{\suml}{\sum\limits}
\newcommand{\prodl}{\, \prod\limits}
\newcommand{\intl}{\int\limits}
\newcommand{\rmod}{/}
\newcommand{\lmod}{\backslash}
\newcommand{\Stab}{\operatorname{Stab}}
\newcommand{\bfG}{\mathbf{G}}
\newcommand{\bfP}{\mathbf{P}}
\newcommand{\bfM}{\mathbf{M}}
\newcommand{\bfU}{\mathbf{U}}
\newcommand{\bfS}{\mathbf{S}}
\newcommand{\bfB}{\mathbf{B}}
\newcommand{\bfT}{\mathbf{T}}
\newcommand{\bfN}{\mathbf{N}}
\newcommand{\bfCT}{\mathbf{CT}}
\newcommand{\bfX}{\mathbf{X}}
\newcommand{\ds}{\displaystyle}
\newcommand{\placestimes}{\displaystyle\operatorname*{\otimes}_{\nu\in\Places}}
\def\imod#1{\allowbreak\mkern10mu({\operator@font mod}\,\,#1)}
\renewcommand\section{\@startsection{section}{1}{\z@}%
                                  {-3.5ex \@plus -1ex \@minus-.2ex}%
                                  {2.3ex \@plus.2ex}%
                                  {\center\normalfont\large\bfseries}}
\renewcommand\subsection{\@startsection{subsection}{2}{\z@}%
	{-3.5ex \@plus -1ex \@minus-.2ex}%
	{2.3ex \@plus.2ex}%
	{\normalfont\large\bfseries}}
\renewcommand\subsubsection{\@startsection{subsubsection}{3}{\z@}%
	{-3.5ex \@plus -1ex \@minus-.2ex}%
	{2.3ex \@plus.2ex}%
	{\normalfont\large\bfseries}}
\newtheorem*{rep@theorem}{\rep@title} \newcommand{\newreptheorem}[2]{%
\newenvironment{rep#1}[1]{%
\def\rep@title{\bf #2 \ref{##1} }%
\begin{rep@theorem} }%
{\end{rep@theorem} } }
\protected\def\ignorethis#1\endignorethis{}
\let\endignorethis\relax
\newcounter{NoTableEntry}
\renewcommand*{\theNoTableEntry}{NTE-\the\value{NoTableEntry}}
\newcommand*{\notableentry}{%
  \kern-\tabcolsep
  \stepcounter{NoTableEntry}%
  \vadjust pre{\zsavepos{\theNoTableEntry t}}
  \vadjust{\zsavepos{\theNoTableEntry b}}
  \zsavepos{\theNoTableEntry l}
  \raisebox{%
    \dimexpr\zposy{\theNoTableEntry b}sp
    -\zposy{\theNoTableEntry l}sp\relax
  }[0pt][0pt]{%
    \setlength{\unitlength}{1pt}%
    \edef\w{%
      \strip@pt\dimexpr\zposx{\theNoTableEntry r}sp%
      -\zposx{\theNoTableEntry l}sp\relax
    }%
    \edef\h{%
      \strip@pt\dimexpr\zposy{\theNoTableEntry t}sp%
      -\zposy{\theNoTableEntry b}sp\relax
    }%
    \ifdim\w pt=0pt 
    \else
      \begin{picture}(0,0)%
        \edef\x{%
          \noexpand\put(0,0){\noexpand\line(\w,\h){\w}}%
          \noexpand\put(0,\h){\noexpand\line(\w,-\h){\w}}%
        }\x
      \end{picture}%
    \fi
  }%
  \hspace{0pt plus 1filll}%
  \zsavepos{\theNoTableEntry r}
  \kern-\tabcolsep
}
\providecommand*{\cupdot}{%
	\mathbin{%
		\mathpalette\@cupdot{}%
	}%
}
\newcommand*{\@cupdot}[2]{%
	\ooalign{%
		$\m@th#1\cup$\cr
		\sbox0{$#1\cup$}%
		\dimen@=\ht0 %
		\sbox0{$\m@th#1\cdot$}%
		\advance\dimen@ by -\ht0 %
		\dimen@=.5\dimen@
		\hidewidth\raise\dimen@\box0\hidewidth
	}%
}
\providecommand*{\bigcupdot}{%
	\mathop{%
		\vphantom{\bigcup}%
		\mathpalette\@bigcupdot{}%
	}%
}
\newcommand*{\@bigcupdot}[2]{%
	\ooalign{%
		$\m@th#1\bigcup$\cr
		\sbox0{$#1\bigcup$}%
		\dimen@=\ht0 %
		\advance\dimen@ by -\dp0 %
		\sbox0{\scalebox{2}{$\m@th#1\cdot$}}%
		\advance\dimen@ by -\ht0 %
		\dimen@=.5\dimen@
		\hidewidth\raise\dimen@\box0\hidewidth
	}%
}
\title[The Degenerate Residual Spectrum of $Spin_8^E$ Along the Heisenberg Parabolic]{The Degenerate Residual Spectrum of Quasi-Split Forms of $Spin_8$ Associated to the Heisenberg Parabolic Subgroup}
\author{Avner Segal}
\address{Department of Mathematics, University of British Columbia, Vancouver, BC, V6T 1Z2, Canada}
\email{avners@math.ubc.ca}
\begin{document}

\begin{abstract}
In \cite{MR3284482} and \cite{MR3658191}, the twisted standard $\mathcal{L}$-function $\Lfun\bk{s,\pi,\chi,\st}$ of a cuspidal representation $ \pi$ of the exceptional group of type $G_2$ was shown to be represented by a family of new-way Rankin-Selberg integrals.
These integrals connect the analytic behaviour of $\Lfun\bk{s,\pi,\chi,\st}$ with that of a family of degenerate Eisenstein series $\Eisen_E\bk{\chi, f_s, s, g}$ on quasi-split forms $H_E$ of $Spin_8$, induced from Heisenberg parabolic subgroups.
The analytic behaviour of the series $\Eisen_E\bk{\chi, f_s, s, g}$ in the right half-plane $\Real\bk{s}>0$ was studied in \cite{SegalEisen}.
In this paper we study the residual representations associated with $\Eisen_E\bk{\chi, f_s, s, g}$.
\end{abstract}

\maketitle

\begin{center}
Mathematics Subject Classification: 11F70 (11M36, 32N10)
\end{center}

\tableofcontents

\section{Introduction}

In \cite{MR3284482} and \cite{MR3658191}, the twisted standard $\mathcal{L}$-function $\Lfun\bk{s,\pi,\chi,\st}$ of a cuspidal representation $ \pi$ of the exceptional group of type $G_2$ was shown to be represented by a family of new-way Rankin-Selberg integrals.
These integrals links the analytic behaviour of $\Lfun\bk{s,\pi,\chi,\st}$ with that of a family of degenerate Eisenstein series $\Eisen_E\bk{\chi, f, s-\frac{1}{2}, g}$ on quasi-split forms $H_E$ of $Spin_8$ induced from Heisenberg parabolic subgroups.

The analytic behaviour of $\Eisen_E\bk{\chi, f, s, g}$ in the right half-plane $\Real\bk{s}>0$ was studied in \cite{SegalEisen}.
As a consequence, it was shown that $\Lfun\bk{s,\pi,\chi,\st}$ is holomorphic at any $s\neq 1,2$ such that $\Real\bk{s}>0$.
It was further shown that the orders of the poles of $\Lfun\bk{s,\pi,\chi,\st}$ at $s=1$ and $s=2$ are bounded as follows:
\begin{itemize}
	\item At $s=1$, $\Lfun\bk{s,\pi,\chi,\st}$ may admit at most a simple pole when $\chi$ is a quadratic character.
	For any other $\chi$, $\Lfun\bk{s,\pi,\chi,\st}$ is holomorphic there.
	
	\item At $s=2$, $\Lfun\bk{s,\pi,\chi,\st}$ may admit at most a double pole when $\chi$ is trivial and at most a simple pole when $\chi$ is of order $2$ or $3$. For any other $\chi$, $\Lfun\bk{s,\pi,\chi,\st}$ is holomorphic there.
\end{itemize}

This information was applied, in \cite[Part 2]{SegalEisen} and \cite{RallisSchiffmannPaper}, to classify all cuspidal representations $\pi$ of $G_2\bk{\A}$ such that $\Lfun\bk{s,\pi,\chi,\st}$ admits a pole at $s=2$ in terms of functorial lifts.
More precisely, it is shown that
\begin{itemize}
	\item (\cite{SegalEisen}) If $\Lfun\bk{s,\pi,\Id,\st}$ admits a pole of order two at $s=2$ or $\Lfun\bk{s,\pi,\chi,\st}$ admits a simple pole at $s=2$ for $\chi\neq\Id$, then $\pi$ is a lift from a group of finite type.

\item (\cite{RallisSchiffmannPaper}) If $\Lfun\bk{s,\pi,\Id,\st}$ admits a simple at $s=2$ then $\pi$ is a Rallis-Schiffman lift from $\widetilde{SL_2}$.
\end{itemize}
These calculations make use of the residual representation of $\Eisen_E\bk{\chi, f_s, s, g}$ at $s=\frac{3}{2}$.

The classification of cuspidal representations $\pi$ of $G_2$ such that $\Lfun\bk{s,\pi,\chi,\st}$ admits a pole at $s=1$ in terms of functorial lifts is an open problem.
This paper is a study of the residual representations of $\Eisen_E\bk{\chi, f_s, s, g}$ for $\Real\bk{s}>0$, and in particular at $s=\frac{1}{2}$.
Applying the results of this paper to the classification of cuspidal representations of $G_2$ in terms of the analytic behaviour of $\Lfun\bk{s,\pi,\chi,\st}$ at $s=1$ is a work in progress.

The square-integrable residual representations associated with $\Eisen_E\bk{\chi, f_s, s, g}$ are listed in \Cref{Thm:Main_Sq_Int}.
The statement of this theorem will not be quoted here since it requires a detailed description of the irreducible quotients of the local degenerate principal series $I_\nu\bk{\chi_\nu,s}$ associated to $\Eisen_E\bk{\chi, f_s, s, g}$.
However, we note here the main interesting feature of this theorem.
The square-integrable residues of $\Eisen_E\bk{\chi, f_s, s, g}$ at $s=\frac{1}{2}$, when $\chi$ is of order $2$, are given by parity conditions on the cardinality of certain subsets of the set  of places $S$ where the local representation is ramified.
Most of these parity conditions are similar to those found in similar computations (for example, see\cite{MR3742780,MR3437492,MR1847140,MR1300285}).
\todonum{Should maybe add more references?}
However, to the best of the author's knowledge, there is no previous example of a parity condition similar to the one in \Cref{Eq:Sq_int_thm_split_case_nontrivial_char}.

It is worth noting that the residual spectrum of $H_E$, when $E$ is a field, was computed in \cite{LaoResidualSpectrum}.
In particular, decomposing the square-integrable spectrum of $H_E$ with respect to cuspidal components, we get
\begin{equation}
L^2\bk{H_E\bk{F}\lmod H_E\bk{\A}} = \bigoplus_{\coset{M,\sigma}} L^2_{\coset{M,\sigma}},
\end{equation}
where the sum goes over all pairs of a standard Levi subgroup $M$ and a cuspidal representation $\sigma$ of $M$, up to $W$-conjugation.
The space $L^2_{\coset{M,\sigma}}$ is the subspace of $L^2\bk{H_E\bk{F}\lmod H_E\bk{\A}}$ spanned by automorphic forms with cuspidal data $\coset{M,\sigma}$.

We also write
\begin{equation}
L^2\bk{H_E\bk{F}\lmod H_E\bk{\A}} = L^2_{cusp.} \oplus L^2_{res.} \oplus L^2_{cont.},
\end{equation}
where $L^2_{cusp.}$ denotes the cuspidal spectrum of $H_E\bk{\A}$, $L^2_{res.}$ denotes its residual spectrum and $L^2_{cont.}$ denotes the continuous spectrum.
Let $L^2_{\coset{M,\sigma},res.}$ denote $L^2_{\coset{M,\sigma}} \cap L^2_{res.}$.

By the general theory of Eisenstein series, the degenerate residual spectrum, computed in \Cref{Thm:Main_Sq_Int}, is contained in
\begin{equation}
\label{eq:Part_of_Spectrum_associated_to_deg_res_spectrum}
\bigoplus_{\chi:F^\times\lmod\A^\times\to \mathcal{S}^1} L^2_{\coset{T_E,\mu_\chi},res.},
\end{equation}
where $T_E$ is a maximal torus in $H_E$ and $\mu_\chi$ is a character of $T_E$ obtained by restriction of a character of the Levi subgroup of the Heisenberg parabolic subgroup of $H_E$; see \Cref{Sec:Deg_Eis_Series_on_H_E} for more details.
Note that $L^2_{\coset{T_E,\mu_\chi},res.}$ might be $\bk{0}$.
A comparison of \Cref{Thm:Main_Sq_Int}, when $E$ is a field, and the results of \cite[Theorem 5.15]{LaoResidualSpectrum}, shows that the square-integrable degenerate residual spectrum spans all of the space \ref{eq:Part_of_Spectrum_associated_to_deg_res_spectrum}.

The non-square-integrable residual representations associated with $\Eisen_E\bk{\chi, f_s, s, g}$ are listed in \Cref{Thm:Main_non_Sq_Int}.
The non-square-integrable residual representation at $s=\frac{3}{2}$ was essentially computed in \cite{RallisSchiffmannPaper}.
When $s=\frac{1}{2}$, a pole occurs when $E=F\times K$ and $\chi\circ\Nm_{K/F}\equiv \Id$.
For any such $\chi$, the residual representation is shown to be irreducible, while the technique used for the proof varies for different $\chi$.

For $\chi=\Id$, we prove a Siegel-Weyl-type identity between the residue of $\Eisen_E\bk{\Id, f_s, s, g}$ at $s=\frac{1}{2}$ and the special value of another Eisenstein series.
This identity is especially interesting as the other Eisenstein series is evaluated on the unitary axis, which makes the associated degenerate principal series semi-simple.

When $\chi\neq\Id$, the computation involves a detailed study of the images of certain intertwining operators, for which the results of the previous case are surprisingly useful.


This paper is structured as follows:
\begin{itemize}
	\item \Cref{Sec:Background} describes general notation and results used throughout this paper.
	
	\item In \Cref{Sec:Deg_Eis_Series_on_H_E}, the groups $H_E$ and the Eisenstein series $\Eisen_E\bk{\chi, f, s, g}$ are introduced.
	Also, the main results of \cite{SegalEisen} are summarized.
	
	\item \Cref{Sec_Local_Representations} studies the irreducible quotients of the local degenerate principal series $I_\nu\bk{\chi_\nu,s}$, associated with $\Eisen_E\bk{\chi, f, s, g}$.
	Some parts of the computation, for Archimedean places, are performed in \Cref{Appendix:Archimedean}.
	
	These irreducible quotients are then identified as eigenspaces of various intertwining operators acting on the maximal semi-simple quotient of $I_\nu\bk{\chi_\nu,s}$.

	\item In \Cref{Sec:SquareIntegrableResidues}, the square-integrable degenerate residual spectrum is computed. 
	The results of this section are summarized, at its end, in \Cref{Thm:Main_Sq_Int}.
	
	\item In \Cref{Sec:NonSquareIntegrableResidues}, the non-square-integrable degenerate residual spectrum is computed. 
	The results of this section are summarized, at its end, in \Cref{Thm:Main_non_Sq_Int}.
	
	\item In \Cref{Appendix:Archimedean} we demonstrate the application of the software "atlas of lie groups" (ATLAS) for certain calculations in $I_\nu\bk{\chi_\nu,s}$ at Archimedean places.
	
	\item In \Cref{App:Evaluation_of_Normalized_Eisenstein_Series}, a few complementary calculations for \Cref{Sec:NonSquareIntegrableResidues} are carried out.
\end{itemize}

{\bf Acknowledgments.} 
It is a great pleasure to thank Nadya Gurevich, Wee Teck Gan and Gordan Savin for insightful discussions during my work on \cite{SegalEisen} and this paper.

During the preparation of \Cref{Appendix:Archimedean}, I was helped by Jeffrey Adams, Siddhartha Sahi, Zhuohui Zhang and Lior Silberman and I wish to thank them all for their assistance.

\todonum{Do I need to quote a grant?}



\section{Background Theory}
\label{Sec:Background}
In this section we consider notations and preliminary results we use in this paper.
For a comprehensive account on the theory of Eisenstein series consider \cite{MR1361168}.
Some of the facts discussed in this section are described in more detail in the survey in \cite[Section 2]{SegalEisen}.

\subsection{Notation}
%
%
%
%
%

Let $F$ denote a number field with a set of places $\Places$ and a ring of adeles $\A=\A_F$.

Let $\bfG$ be a quasi-split, simple group of relative rank $n$, defined over $F$ and let $\bfB=\bfT\cdot \bfN$ denote a Borel subgroup of $\bfG$ with maximal torus $\bfT$ and unipotent radical $\bfN$, all defined over $F$.
Also denote by $\bfS\subset \bfT$ a maximal split torus defined over $F$.
We let $\Phi=\Phi\bk{\bfG,\bfS}$ denote the relative root system of $\bfG$ with respect to $\bfB$ with simple roots $\Delta=\Delta\bk{\bfG,\bfS}$.
For $\alpha\in\Phi$, we denote by $F_\alpha$ its field of definition.
Let $\Phi^{+}$ denote the set of positive roots in $\Phi$ with respect to $\bfB$.

Let $W=W\bk{\bfG,\bfB}$ denote the Weyl group of $\bfG$.
The Weyl group $W$ is generated by the simple reflections $w_\alpha$ along the simple roots $\alpha\in\Delta$.

We recall the correspondence
\[
\begin{array}{ccc}
\begin{Bmatrix}
\text{Subsets of $\Delta$}
\end{Bmatrix} 
&\longleftrightarrow &
\begin{Bmatrix}
\text{Standard parabolic}\\ \text{subgroups of $\bfG$}
\end{Bmatrix} \\
\Psi\subset\Delta & \longleftrightarrow & \bfP_\Psi
\end{array}
\]

Furthermore, for $\Psi\subset\Delta$, we write $\bfP_\Psi=\bfM_\Psi\cdot \bfU_\Psi$, where $\bfU_\Psi$ denote the unipotent radical of $\bfP_\Psi$ and  $\bfM_\Psi$ denotes its Levi subgroup.
We also write $\Delta_\bfM = \Psi$ for the set of simple roots of $M_\Psi$, $\Phi_M$ for the set of roots of $M_\Psi$ and $\Phi_M^{+}$ for the set of positive roots.

Fix a standard parabolic subgroup $\bfP=\bfP_\Psi$ of $\bfG$, with $\bfM=\bfM_\Psi$ and $\bfU=\bfU_\Psi$, all defined over $F$.
Let $\mathfrak{a}_{\bfM,\C}^\ast=X^\ast\bk{\bfM}_F\otimes_\Z\C$, where $X^\ast\bk{\bfM}_F$ denote the $F$-rational characters of $\bfM$.
Also, let $W_\bfM=W\bk{\bfM,\bfM\cap\bfB}$ denote the relative Weyl group of $\bfM$ and note that the quotient $W_\bfM\lmod W$ is well defined; let $W\bk{\bfM,\bfG}$ denote the set of shortest representatives of the cosets $W_\bfM \lmod W$.

For any $\alpha\in\Delta$, let $\omega_\alpha$ denote the fundamental weight associated to $\alpha$.
The fundamental weights give rise to the isomorphism
\[
\begin{array}{ccc}
\C^n & \rightarrow & \mathfrak{a}_{\bfT,\C}^\ast \\
\bar{s}=\bk{s_\alpha}_{\alpha\in\Delta} & \mapsto & \lambda_{\bar{s}} = \suml_{\alpha\in\Delta} s_\alpha \omega_\alpha .
\end{array}
\]

Throughout, we denote the contragredient of a representation $\pi$ by $\pi^\ast$.
Let $\Id_G$ denote the trivial representation of $G$ and if there is no source of confusion, it will simply be denoted by $\Id$.
Also, in any vector space $V$ over $\C$, we denote the zero vector by $\bar{0}$.

\subsection{Characters on Levi Subgroups and their Restriction to the Torus}

For a Levi subgroup $\bfM$ of $\bfG$, let $\bfX_\bfM$ denote the complex manifold of characters of $\bfM\bk{\A}$ trivial on $\bfM\bk{F}$.
There is a natural embedding of $\mathfrak{a}^\ast_{\bfM,\C}$ in $\bfX_\bfM$.
One can choose a direct sum complement $\bfX_\bfM = \mathfrak{a}^\ast_{\bfM,\C} \oplus \bfX_{\bfM,0}$, where the characters in $\bfX_{\bfM,0}$ are of finite order.

We note that the restriction from $\bfM$ to $\bfT$ gives rise to natural embeddings
\begin{equation}
\label{Eq:Inculsions_of_character_groups}
\iota_M:\bfX_\bfM \hookrightarrow \bfX_\bfT, \quad
X^\ast\bk{\bfM}\hookrightarrow X^\ast\bk{\bfT}, \quad
\mathfrak{a}^\ast_{\bfM,\C}\hookrightarrow \mathfrak{a}^\ast_{\bfT,\C}.
\end{equation}
The image of these embeddings can be identified by restriction to $\bfM^{der}$.
Namely, for $\chi\in \bfX_\bfT$ it holds that $\chi\in \iota_M\bk{\bfX_\bfM}$ if and only if $\gen{\chi,\check{\alpha}}=0$ for all $\alpha\in\Delta_\bfM$.
Similarly for $X^\ast\bk{\bfM}$ and $\mathfrak{a}^\ast_{\bfM,\C}$.

In particular, any element of $\lambda\in\mathfrak{a}^\ast_{\bfM,\C}$ is of the form
\[
\lambda=\suml_{\alpha\notin\Delta_\bfM} s_\alpha \omega_\alpha,
\]
where $s_\alpha\in\C$ and any $\chi\in\bfX_\bfM$ is of the form
\[
\chi = \suml_{\alpha\notin\Delta_\bfM} \chi_\alpha\circ \omega_\alpha,
\]
where $\chi_\alpha\in\bfX_{\mathbf{GL_1}}$.

\begin{Remark}
	Given the split component $A_\bfM$ of the center of $\bfM$, we have $A_\bfM\subset \bfT$.
	In particular, the restriction yields surjective maps
	\[
	r_M:\bfX_\bfT \twoheadrightarrow \bfX_{A_\bfM}, \quad
	X^\ast\bk{\bfT}\twoheadrightarrow X^\ast\bk{A_\bfM}, \quad
	\mathfrak{a}^\ast_{\bfT,\C}\twoheadrightarrow \mathfrak{a}^\ast_{A_\bfM,\C}.
	\]
	In particular, since $\mathfrak{a}^\ast_{A_\bfM,\C}\cong \mathfrak{a}^\ast_{\bfM,\C}$, the composition of the maps
	\[
	\mathfrak{a}^\ast_{\bfM,\C} \overset{\iota_M}{\hookrightarrow}
	\mathfrak{a}^\ast_{\bfT,\C} \overset{r_M}{\twoheadrightarrow}
	\mathfrak{a}^\ast_{\bfM,\C}
	\]
	is the identity map on $\mathfrak{a}^\ast_{\bfM,\C}$.
	For more details, see \cite[pg. 3]{MR2490651}.
\end{Remark}


%
%

\subsection{Degenerate Eisenstein Series}
We fix a Hecke character $\mu:M\bk{F}\lmod M\bk{\A}\to\C^\times$. We will usually assume that it is of finite order, i.e. a Dirichlet character.
For $\lambda\in\mathfrak{a}_{\bfM,\C}^\ast$, we consider the normalized parabolic induction
\[
I_{\bfP}\bk{\mu,\lambda} = \Ind_{\bfP\bk{\A}}^{\bfG\bk{\A}} \bk{\mu\otimes\lambda}.
\]
For a standard section $f_\lambda\in I_{\bfP}\bk{\mu,\lambda}$ we form the associated Eisenstein series
\begin{equation}
\label{Eq:Degenerate_Eisenstein_series_definition}
\Eisen_{\bfP}\bk{\mu,f,\lambda,g} = \suml_{\gamma\in \bfP\bk{F}\lmod \bfG\bk{F}} f_\lambda\bk{\gamma g} .
\end{equation}
This series converges for $\Real\bk{\lambda}\gg 0$ and admits a meromorphic continuation to $\mathfrak{a}_{\bfM,\C}^\ast$.
More precisely, let
\[
\mathfrak{F}_M^{+}=\set{\lambda \mvert \Real\bk{\gen{\lambda-\rho_{\mathbf{B}},\check{\alpha}}}>0 \quad \forall \alpha\in \Phi^{+}\setminus\Phi_M^{+}}.
\]
In particular, we write $\mathfrak{F}^{+}=\mathfrak{F}_\bfT^{+}$.
The series on the right hand-side of \Cref{Eq:Degenerate_Eisenstein_series_definition} converges if $\iota_M\bk{\lambda}\in \mathfrak{F}_M^{+}$.

We note that the leading terms of this series are intertwining operators into the space of automorphic forms on $\bfG\bk{\A}$.
Namely, they give an automorphic realization to a quotient of $I\bk{\mu,\lambda}$.

Denote the half-sum of the roots in $\mathfrak{u}=Lie\bk{\bfU}$ by $\rho_{\bfP}$.
It is known that
\begin{equation}
\label{Eq:Induction_from_Parabolic_in_induction_from_Borel}
\begin{split}
& I_{\bfP}\bk{\mu,\lambda} \hookrightarrow 
\Ind_{\bfB\bk{\A}}^{\bfG\bk{\A}} \bk{\mu\otimes\lambda\otimes\FNorm{\rho_{\bfP}-\rho_{\bfB}}} \\
& \Ind_{\bfB\bk{\A}}^{\bfG\bk{\A}} \bk{\mu\otimes\lambda\otimes\FNorm{\rho_{\bfB}-\rho_{\bfP}}}  \twoheadrightarrow I_{\bfP}\bk{\lambda,\mu} ,
\end{split}
\end{equation}
where we implicitly used the inclusions in \Cref{Eq:Inculsions_of_character_groups}.
Note that, under this inclusion, $\rho_\bfB-\rho_\bfP=\rho_{\bfB\cap\bfM}$.
We note that for any $f\in I_{\bfP}\bk{\mu,\lambda}$ it holds that
\begin{equation}
\Eisen_{\bfP}\bk{\mu,f,\lambda,g} = \Eisen_{\bfB}\bk{\mu,f,\lambda\otimes\FNorm{\rho_{\bfP}-\rho_{\bfB}},g} .
\end{equation}
This is proven in \Cref{Prop:Equality_of_Eisenstein_series} bellow.

%
%
%
%

Throughout this paper, we use the conventions for Dedekind $\zeta$-functions, Hecke $\Lfun$-functions and their $\epsilon$-factors, specified in \cite{SegalEisen}[Sec. 3.2].
In particular, given a number field $L$, $\zfun_L\bk{s}$ denote the $\zeta$-function of $L$ normalized so that it satisfies the functional equation $\zfun_L\bk{s}=\zfun_L\bk{1-s}$ and $\Lfun_L\bk{s,\chi}$ denotes the Hecke $\Lfun$-function of $\chi:L^\times\lmod\A_L^\times\to\C^\times$.

We also note that, if $\mu=\Id$, we may drop it from our notation.


\subsection{Intertwining Operators and the Constant Term Formula}
For $\lambda\in\mathfrak{a}_{\bfT,\C}^\ast$, a Hecke character $\mu:T\bk{F}\lmod T\bk{\A}\to\C^\times$ and $w\in W$ we consider the standard intertwining operator given by the integral
\begin{equation}
M\bk{w,\mu,\lambda}f_\lambda\bk{g} = \intl_{\bfN\bk{\A}\cap w^{-1}\bfN\bk{\A}w\lmod \bfN\bk{\A}} f_\lambda\bk{wug} du.
\end{equation}
This integral converges on the shifted positive Weyl chamber $\mathfrak{F}^{+}$ to a holomorphic family of operators and admits a meromorphic continuation to $\mathfrak{a}_{\bfT,\C}^\ast$.
At points of holomorphy, $M\bk{w,\mu,\lambda}$ defines an intertwining operator
\[
M\bk{w,\mu,\lambda} : I_{\bfB}\bk{\mu,\lambda} \to I_{\bfB}\bk{w^{-1}\cdot\mu,w^{-1}\cdot\lambda} .
\]

We note the following cocycle relation on the standard intertwining operators
\begin{Lem}
For any $w,w'\in W$ we have
\[
M\bk{ww',\mu,\lambda} = M\bk{w',w^{-1}\cdot\mu, w^{-1}\cdot\lambda} \circ M\bk{w,\mu,\lambda} .
\]
\end{Lem}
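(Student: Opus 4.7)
The plan is to verify the cocycle relation first on the shifted positive Weyl chamber $\mathfrak{F}^{+}$, where all three intertwining integrals converge absolutely, and then to extend the identity to all of $\mathfrak{a}^\ast_{\bfT,\C}$ by meromorphic continuation. Fix a standard section $f_\lambda\in I_{\bfB}\bk{\mu,\lambda}$ and an element $g\in\bfG\bk{\A}$; it suffices to verify the scalar equality at $g$.

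Unfolding the composition on the right-hand side produces the iterated integral
\[
\left[M\bk{w',w^{-1}\cdot\mu,w^{-1}\cdot\lambda}\circ M\bk{w,\mu,\lambda}\right]f_\lambda\bk{g} = \intl_{\bfN\bk{\A}\cap w'^{-1}\bfN\bk{\A}w'\lmod\bfN\bk{\A}}\intl_{\bfN\bk{\A}\cap w^{-1}\bfN\bk{\A}w\lmod\bfN\bk{\A}} f_\lambda\bk{wuw'vg}\,du\,dv.
\]
The central manipulation is to rewrite $wuw' = \bk{ww'}\bk{w'^{-1}uw'}$ and to perform the substitution $u\mapsto w'\tilde u w'^{-1}$ in the inner integral. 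The Jacobian of this substitution is absorbed by the choice of Haar measures on the root subgroups of $\bfN$, and the product $\tilde u\cdot v$ should then be identified with a set of coset representatives in $\bk{\bfN\cap\bk{ww'}^{-1}\bfN\bk{ww'}}\bk{\A}\lmod\bfN\bk{\A}$.

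The main obstacle, and the only real algebraic content of the argument, is this identification of integration domains in terms of root subgroups. When $\ell\bk{ww'}=\ell\bk{w}+\ell\bk{w'}$, one has the disjoint decomposition
\[
\Phi^{+}\cap\bk{ww'}^{-1}\Phi^{-} = \bk{\Phi^{+}\cap w'^{-1}\Phi^{-}}\sqcup w'^{-1}\bk{\Phi^{+}\cap w^{-1}\Phi^{-}},
\]
where the length condition ensures that the second piece is contained in $\Phi^{+}$. This lifts to a measure-preserving bijection of the corresponding unipotent subgroups of $\bfN\bk{\A}$, and the cocycle identity drops out directly. For general $w,w'$, I would reduce to the length-additive case by induction on $\ell\bk{w}$: write $w=s_\alpha w''$ with $\alpha$ a simple root and $\ell\bk{w}=1+\ell\bk{w''}$, apply the length-additive case to the pair $\bk{s_\alpha,w''w'}$ when $\ell\bk{s_\alpha w''w'}=1+\ell\bk{w''w'}$, and handle the complementary case by a braid/cancellation argument using the identity for simple reflections. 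Finally, since both sides of the proposed equality are meromorphic in $\lambda$ on $\mathfrak{a}^\ast_{\bfT,\C}$ and agree on a nonempty open subset (where all of the above integrals converge absolutely), they agree as meromorphic families of operators.
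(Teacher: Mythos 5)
The paper states this lemma without proof (it is quoted as a standard fact), so the only real question is whether your argument stands on its own. For the length-additive case $\ell\bk{ww'}=\ell\bk{w}+\ell\bk{w'}$ it does: unfolding the iterated integral, conjugating the inner variable by $w'$, and using the disjoint decomposition $R\bk{ww'}=R\bk{w'}\sqcup w'^{-1}R\bk{w}$ of the sets $R\bk{w}=\set{\alpha>0 \mvert w\alpha<0}$ is exactly the standard argument, and your remark that length-additivity is what places $w'^{-1}R\bk{w}$ inside $\Phi^{+}$ is the right key point. One detail to make explicit: for the iterated integral to converge you also need $w^{-1}\cdot\lambda$ to lie in the convergence region of $M\bk{w',\cdot}$, which is why this step is cleanest when run as an induction on $\ell\bk{w'}$ with a simple reflection peeled off at each stage rather than for general $w'$ in one shot.

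The genuine gap is in your reduction of the general case to the length-additive one. The "complementary case" of your induction, where $\ell\bk{s_\alpha w''w'}=\ell\bk{w''w'}-1$, requires precisely the identity $M\bk{s_\alpha,s_\alpha\cdot\mu,s_\alpha\cdot\lambda}\circ M\bk{s_\alpha,\mu,\lambda}=\operatorname{Id}$. This rank-one functional equation does not follow from any unfolding: the two defining integrals never converge on a common open set (one needs $\Real\gen{\lambda,\check{\alpha}}\gg 0$, the other $\Real\gen{\lambda,\check{\alpha}}\ll 0$), so there is no domain on which to compose convergent integrals and compare with the identity operator. Moreover, locally the composition is a nontrivial scalar, essentially $\Lfun_{F_{\alpha,\nu}}\bk{u}\Lfun_{F_{\alpha,\nu}}\bk{-u}/\Lfun_{F_{\alpha,\nu}}\bk{1+u}\Lfun_{F_{\alpha,\nu}}\bk{1-u}$ up to $\epsilon$-factors with $u=\gen{\lambda,\check{\alpha}}$ and the appropriate twist by $\mu\circ\check{\alpha}$; the statement that the product of these scalars over all places equals $1$ is equivalent to the functional equation of the completed Hecke $\Lfun$-function of $\mu\circ\check{\alpha}$. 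So the cocycle relation for arbitrary $w,w'$ is a genuinely global theorem (it is part of Langlands' functional equations for intertwining operators, cf. Moeglin--Waldspurger IV.1.10) and cannot be obtained by a braid/cancellation argument from the length-additive case alone. Since every application in the paper composes operators along a reduced word, the cleanest repair is to state and prove the lemma under the hypothesis $\ell\bk{ww'}=\ell\bk{w}+\ell\bk{w'}$; otherwise you must separately invoke, or prove, the rank-one functional equation.
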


\begin{Remark}
	Note that is an alternative definition of $M\bk{w,\mu,\lambda}$ which yields a different cocycle relation.
	Namely, for
	\[
	\widetilde{M}\bk{w,\mu,\lambda}f_\lambda\bk{g} = \intl_{\bfN\bk{\A}\cap w\bfN\bk{\A}w^{-1}\lmod \bfN\bk{\A}} f_\lambda\bk{w^{-1}ug} du
	\]
	the following cocycle relation holds
	\[
	M\bk{ww',\mu,\lambda} = M\bk{w,w'\cdot\mu, w^{-1}\cdot\lambda} \circ M\bk{w',\mu,\lambda} .
	\]
\end{Remark}

\vspace{0.3cm}

The constant term of $\Eisen_\bfP\bk{\mu,f,\lambda,g}$ along $\bfB$ is given by
\[
\Eisen_\bfP\bk{\mu,f,\lambda,g}_\bfCT = \intl_{\bfN\bk{F}\lmod \bfN\bk{\A}} \Eisen_\bfP\bk{\mu,f,\lambda,ug} \, du .
\]
When restricted to $\bfT\bk{\A}$, this is an automorphic form on $\bfT\bk{\A}$; however, it is beneficial to consider this also as a function of $\bfG\bk{\A}$; in particular
\[
f_\lambda \mapsto \Eisen_\bfP\bk{\mu,f,\lambda,\cdot}_\bfCT
\]
is a $\bfG\bk{\A}$-equivariant map.

The constant term formula, computed as in \cite{MR1469105}, is given as follows:
\begin{equation}
\label{Eq:Constant_term}
\Eisen_\bfP\bk{\mu,f,\lambda,g}_\bfCT = \suml_{w\in W\bk{\bfM,\bfG}} M\bk{w,\mu,\lambda} f_\lambda \bk{g}.
\end{equation}

A simple application of the constant term formula is the following useful well-known result.
\begin{Prop}
	\label{Prop:Equality_of_Eisenstein_series}
	For any $f\in I_{\bfP}\bk{\mu,\lambda}$	it holds that
	\begin{equation}
	\label{Eq:Equality_of_Degenerate_Eisenstein_Series_on_P_and_on_B}
	\Eisen_{\bfP}\bk{\mu,f,\lambda,g} = \Eisen_{\bfB}\bk{\mu,f,\lambda\otimes\FNorm{\rho_{\bfP}-\rho_{\bfB}},g} .
	\end{equation}
\end{Prop}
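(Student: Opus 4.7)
The plan is to prove the identity by comparing constant terms along the Borel subgroup $\bfB$, and then invoking the fact that a degenerate Eisenstein series of this type is determined by its constant term along $\bfB$. By the constant term formula \Cref{Eq:Constant_term}, the left-hand side of \Cref{Eq:Equality_of_Degenerate_Eisenstein_Series_on_P_and_on_B} has constant term $\sum_{w \in W(\bfM, \bfG)} M(w, \mu, \lambda) f(g)$, a sum over shortest representatives of $W_\bfM \lmod W$. Viewing $f$ as a section of $I_\bfB(\mu, \lambda \otimes \FNorm{\rho_\bfP - \rho_\bfB})$ via the embedding in \Cref{Eq:Induction_from_Parabolic_in_induction_from_Borel}, the same formula applied to the right-hand side yields the constant term $\sum_{w \in W} M(w, \mu, \lambda \otimes \FNorm{\rho_\bfP - \rho_\bfB}) f(g)$, summed over all of $W$.

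To match the two, I would use the unique factorization $W = W_\bfM \cdot W(\bfM, \bfG)$ together with the cocycle relation for the standard intertwiners to rewrite the sum on the right as an iterated sum over $v \in W_\bfM$ (inner) and $w' \in W(\bfM, \bfG)$ (outer). The identification then reduces to the inner identity $\sum_{v \in W_\bfM} M(v, \mu, \lambda \otimes \FNorm{\rho_\bfP - \rho_\bfB}) f = f$, which reflects the fact that $f$, being a parabolic section, encodes a one-dimensional representation of $\bfM$ fixed by $W_\bfM$ (since $\lambda \in \mathfrak{a}^\ast_{\bfM, \C}$ and the shift $\FNorm{\rho_\bfP - \rho_\bfB} = \FNorm{-\rho_{\bfB \cap \bfM}}$ places the parameters precisely at the values where the parabolic induction embeds in the Borel one). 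Matching outer summands of the two sums then gives the proposition, and the claim about automorphic forms being determined by their constant term closes the argument.

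The main obstacle is precisely this inner collapse: the intertwiners $M(v, \mu, \lambda \otimes \FNorm{\rho_\bfP - \rho_\bfB})$ for $v \in W_\bfM$ sit on the boundary of their convergence region, so one must evaluate them via analytic continuation and verify that the local $c$-functions (essentially ratios of Dedekind zeta values) combine correctly to yield the identity on parabolic sections. Equivalently, in a more direct series-manipulation proof one can use the decomposition $\bfB(F) \lmod \bfG(F) = (\bfB_M(F) \lmod \bfM(F)) \times (\bfP(F) \lmod \bfG(F))$ together with the $\bfM(F)$-invariance of a parabolic section, which follows from the triviality of $\mu$, $\lambda$ and $\FNorm{\rho_\bfP}$ on $\bfM(F)$ by the Hecke-character property and the product formula; but then the formally divergent inner sum over $\bfB_M(F) \lmod \bfM(F)$ must be regularized in the same way, so both routes ultimately rest on the same analytic input.
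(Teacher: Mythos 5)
Your overall strategy is the same as the paper's: compare constant terms along $\bfB$, then conclude because the difference of the two series has cuspidal support along $\bfT$, so vanishing of its constant term forces it to be cuspidal and hence orthogonal to, and therefore zero in, the Eisenstein spectrum. Invoking that last fact is fine. The problem is the middle step, which is the entire content of the proposition and which you leave unresolved. First, your reduction of $\suml_{w\in W}M\bk{w,\mu,\lambda\otimes\FNorm{\rho_\bfP-\rho_\bfB}}f$ to the ``inner identity'' $\suml_{v\in W_\bfM}M\bk{v,\mu,\lambda'}f=f$ (with $\lambda'=\lambda\otimes\FNorm{\rho_\bfP-\rho_\bfB}$) is not logically sufficient even if that identity were established: writing $w=vw'$ with $v\in W_\bfM$, $w'\in W\bk{\bfM,\bfG}$, the cocycle relation gives $M\bk{vw',\mu,\lambda'}=M\bk{w',v^{-1}\cdot\mu,v^{-1}\cdot\lambda'}\circ M\bk{v,\mu,\lambda'}$, and the outer operator depends on $v$, because $\FNorm{\rho_\bfP-\rho_\bfB}=\FNorm{-\rho_{\bfB\cap\bfM}}$ is not $W_\bfM$-invariant. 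So you cannot factor a single outer operator out of the inner sum; what is actually needed is the termwise statement $M\bk{v,\mu,\lambda'}f=0$ for every nontrivial $v\in W_\bfM$.

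That termwise vanishing is exactly what you flag as ``the main obstacle'' and do not prove, and it is not a matter of several $c$-functions combining: each individual operator annihilates $f$. The mechanism is that, under induction in stages, a section of $I_{\bfP}\bk{\mu,\lambda}$ sits inside $I_{\bfB}\bk{\mu,\lambda'}$ precisely in the subspace attached to the trivial representation of $\bfM$, which is the unique irreducible subrepresentation of $\Ind_{\bfB\cap\bfM}^{\bfM}\FNorm{\rho_{\bfP}-\rho_{\bfB}}$; by rank-one reduction this subrepresentation lies in the kernel of $M\bk{v,\cdot}$ for every nontrivial $v\in W_\bfM$, since for each $\alpha\in\Delta_\bfM$ one has $\gen{\lambda',\check{\alpha}}=-1$ and the corresponding rank-one operator kills the trivial subrepresentation of the $SL_2$-induction at that point. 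Once $M\bk{v,\mu,\lambda'}f=0$ for $v\neq 1$ is in hand, the Borel constant term collapses to $\suml_{w'\in W\bk{\bfM,\bfG}}M\bk{w',\mu,\lambda}f$, which matches the constant term of $\Eisen_{\bfP}\bk{\mu,f,\lambda,g}$, and the rest of your argument goes through. Your alternative ``direct series manipulation'' indeed produces a divergent inner sum, as you note, so it cannot substitute for this step.
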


\begin{proof}
	We recall the inclusion
	\[
	I_{\bfP}\bk{\mu,\lambda} \hookrightarrow 
	\Ind_{\bfB\bk{\A}}^{\bfG\bk{\A}} \bk{\mu\otimes\lambda\otimes\FNorm{\rho_{\bfP}-\rho_{\bfB}}},
	\]
	which follows from induction in parts.
	Namely,
	\[
	\Ind_{\bfB\bk{\A}}^{\bfG\bk{\A}} \bk{\mu\otimes\lambda\otimes\FNorm{\rho_{\bfP}-\rho_{\bfB}}}
	= \Ind_{\bfP\bk{\A}}^{\bfG\bk{\A}} \bk{\Ind_{\bfB\bk{\A}\cap \bfM\bk{\A}}^{\bfM\bk{\A}} \FNorm{\rho_{\bfP}-\rho_{\bfB}}} \otimes \mu\otimes\lambda.
	\]
	Since the trivial representation of $\bfM\bk{\A}$ is the unique irreducible subrepresentation of $\Ind_{\bfB\bk{\A}\cap \bfM\bk{\A}}^{\bfM\bk{\A}} \FNorm{\rho_{\bfP}-\rho_{\bfB}}$,
	it is the kernel of $M\bk{w,\mu,\lambda}$ for any non-trivial $w\in W_\bfM$.
	
	This can be rephrased as follows.
	For any $w\notin W\bk{\bfM,\bfG}$ it holds that $M\bk{w,\mu,\lambda}f=0$.
	It follows that we have an equality of the constant terms of the two sides of \Cref{Eq:Equality_of_Degenerate_Eisenstein_Series_on_P_and_on_B} since both are equal to
	\[
	\suml_{w\in W\bk{\bfM,\bfG}} M\bk{w,\mu,\lambda} f_\lambda \bk{g}.
	\]
	We consider the difference between the left and right-hand sides in \Cref{Eq:Equality_of_Degenerate_Eisenstein_Series_on_P_and_on_B},
	\[
	\Eisen_{\bfP}\bk{\mu,f,\lambda,g} - \Eisen_{\bfB}\bk{\mu,f,\lambda\otimes\FNorm{\rho_{\bfP}-\rho_{\bfB}},g} .
	\]
	
	By construction, the cuspidal support (see \cite[pg. 38]{MR1361168}) of both $I_{\bfP}\bk{\mu,\lambda}$ and $I_{\bfB}\bk{\mu,\lambda\otimes\FNorm{\rho_{\bfP}-\rho_{\bfB}}}$ lies along $\bfT$ and hence, the above difference of Eisenstein series is cuspidal.
	However, the cuspidal spectrum is orthogonal to Eisenstein series and hence \[
	\Eisen_{\bfP}\bk{\mu,f,\lambda,g} - \Eisen_{\bfB}\bk{\mu,f,\lambda\otimes\FNorm{\rho_{\bfP}-\rho_{\bfB}},g} = 0 .
	\]
	
\end{proof}

\begin{Remark}
	One can similarly prove the following, closely related, fact.
	For any $f\in I_{\bfP}\bk{\mu,\lambda}$	there exists a section $\widetilde{f}\in I_{\bfB}\bk{\mu,\lambda\otimes\FNorm{\rho_{\bfP}-\rho_{\bfB}}}$ such that
	$M\bk{w_{\bfM,l},\mu,\lambda}f=\widetilde{f}$ and
	\begin{equation}
	\Eisen_{\bfP}\bk{\mu,f,\lambda,g} =
	\bk{\prodl_{\alpha\in\Phi_\bfM^{+}} \bk{\gen{\lambda,\check{\alpha}}-1}} \Eisen_{\bfB}\bk{\mu,\tilde{f},\lambda\otimes\FNorm{\rho_{\bfB}-\rho_{\bfP}},g} .
	\end{equation}
\end{Remark}

\subsection{Local and Global Intertwining Operators}

For a Hecke character $\mu:M\bk{F}\lmod M\bk{\A}\to\C^\times$, we write
\[
\mu = \bigotimes_{\nu\in\Places}\,' \mu_\nu,
\]
where $\mu_\nu:M\bk{F_\nu}\to\C^\times$ is unramified for almost all $\nu\in\Places$.

For a place $\nu\in\Places$, we consdier the degenerate principal series
\[
I_{\bfP,\nu}\bk{\mu,\lambda} = \Ind_{\bfP\bk{F_\nu}}^{\bfG\bk{F_\nu}} \bk{\mu_\nu\otimes\lambda}
\]
of $\bfG\bk{F_\nu}$.
The global degenerate principal series is a restricited tensor product of local ones.
Namely,
\[
I_{\bfP}\bk{\mu,\lambda} = \bigotimes_{\nu\in\Places}\,' I_{\bfP,\nu}\bk{\mu,\lambda} .
\]

For a place $\nu\in\Places$, $\lambda\in\mathfrak{a}_{\bfT,\C}^\ast$, a unitary character $\mu_\nu:T\bk{F_\nu}\to\C^\times$, a section $f_{\lambda,\nu}\in I_{\bfB}\bk{\mu,\lambda}$ and $w\in W$ we consider the standard local intertwining operator given by the integral
\begin{equation}
\label{Eq:Local_Intertwining_Operator_Def}
M_\nu\bk{w,\mu_\nu,\lambda} f_{\lambda,\nu}\bk{g} = \intl_{\bfN\bk{F_\nu}\cap w^{-1}\bfN\bk{F_\nu}w\lmod \bfN\bk{F_\nu}} f_{\lambda,\nu}\bk{wng} dn .
\end{equation}
This integral converges absolutely to an analytic function in $\mathfrak{F}^{+}$ and admits a meromorphic continuation to $\mathfrak{a}_{\bfT,\C}^\ast$.
Furthermore, it holds that the global intertwining operator decomposes into a restricted tensor product of local operators
\[
M\bk{w,\mu,\lambda} = \bigotimes_{\nu\in\Places}\,' M_\nu\bk{w,\mu_\nu,\lambda} .
\]
Namely, given a pure tensor $f_\lambda = \otimes' f_{\lambda,\nu}$ it holds that
\begin{equation}
\label{Eq:Decomposition_of_global_intertwining_operator_to_local_ones}
M\bk{w,\mu,\lambda}f_\lambda = \bigotimes_{\nu\in\Places}\,' M_\nu\bk{w,\mu_\nu,\lambda}f_{\lambda,\nu}
\end{equation}


\subsection{Decomposition into Rank-1 Operators}

The set of intertwining operators satisfies a cocycle condition.
Namely, for any $w, w'\in W$ it holds that
\begin{equation}
\label{eq:Global_Intertwining_Operator_Cocycle_Condition}
M\bk{ww',\mu,\lambda} = M\bk{w',w^{-1}\cdot \mu,w^{-1}\cdot\lambda} \circ M\bk{w,\mu,\lambda} .
\end{equation}

In particular, writing $w=w_{i_1}w_{i_2}\cdot...\cdot w_{i_k}$, where the $w_{i_j}$ are simple reflections, it holds that
\begin{equation}
\label{Eq:Decomposition_of_intertwining_operators_to_simple_reflections}
\begin{split}
 M\bk{w,\mu,\lambda}
& = M\bk{w_{i_k},\bk{w_{i_1}\cdot...\cdot w_{i_{k-1}}}^{-1}\cdot\mu,\bk{w_{i_1}\cdot ... \cdot w_{i_{k-1}}}^{-1}\cdot\lambda} \\ 
 & \circ..\circ
M\bk{w_{i_2},w_{i_1}^{-1}\cdot\mu,w_{i_1}^{-1}\cdot\lambda} \circ
M\bk{w_{i_1},\mu,\lambda}
\end{split}
\end{equation}

This allows us to reduce many calculations to a sequence of calculations in  rank-1.

Let $\mathcal{B}=\mathcal{T}\cdot\mathcal{N}$ be the Borel subgroup of $SL_2$ with torus $\mathcal{T}$ and unipotent radical $\mathcal{N}$.
Also let $\dot{w}=\begin{pmatrix}0&1\\-1&0\end{pmatrix}$ be a representative in $SL_2$ of the non-trivial element in the Weyl group of $SL_2$.

For any simple root $\alpha$, let $\iota_\alpha:SL_2\to\bfG$ denote the associated structure map.

\begin{Lem}
	\label{Lemma:Intertwining_operator_of_simple_reflections}
	For a place $\nu\in\Places$, the following diagram is commutative
	\[
	\xymatrix{
		I_{\bfB,\nu}\bk{\mu,\lambda} \ar@{->}[r]^{M_\nu\bk{w,\mu_\nu,\lambda}} \ar@{->}[d]_{\iota_\alpha^\ast} &
		I_{\bfP,\nu}\bk{w_\alpha^{-1}\cdot\mu,w_{\alpha}^{-1}\cdot\lambda} \ar@{->}[d]^{\iota_\alpha^\ast} \\
		\Ind_{\mathcal{B}\bk{F_\nu}}^{SL_2\bk{F_\nu}} \bk{\coset{\mu_{\nu} \otimes \lambda}\circ\iota_\alpha} \ar@{->}[r]^{M_{\dot{w}}} &
		\Ind_{\mathcal{B}\bk{F_\nu}}^{SL_2\bk{F_\nu}} \bk{w_\alpha^{-1}\cdot\coset{\mu_{\nu} \otimes \lambda}\circ\iota_\alpha} \, ,
	}
	\]	
	where the vertical maps should be understood as the pull-back map.
\end{Lem}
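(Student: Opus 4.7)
The plan is to reduce the integral defining $M_\nu\bk{w_\alpha,\mu_\nu,\lambda}$ to a one-parameter integral on a rank-one subgroup of $\bfG$ and to match it, under the pullback $\iota_\alpha^*$, with the defining integral of $M_{\dot w}$ on $SL_2\bk{F_\nu}$. This is a rank-one reduction, and the proof amounts to a direct unfolding of the two definitions once the integration domains are correctly identified.

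First, I would exploit the defining property of a simple reflection: $w_\alpha$ sends $\alpha$ to $-\alpha$ and permutes the remaining positive roots in $\Phi^{+}$. Consequently, $\bfN\cap w_\alpha^{-1}\bfN w_\alpha$ is generated by the root subgroups attached to the positive roots other than $\alpha$, so that the quotient
\[
\bfN\bk{F_\nu}\cap w_\alpha^{-1}\bfN\bk{F_\nu}w_\alpha \,\lmod\, \bfN\bk{F_\nu}
\]
is identified, via $\iota_\alpha$, with $\iota_\alpha\bk{\mathcal N\bk{F_\nu}}$. Fixing $\iota_\alpha\bk{\dot w}$ as a representative of $w_\alpha$ and transporting Haar measures along $\iota_\alpha$, the integral in \Cref{Eq:Local_Intertwining_Operator_Def} rewrites as
\[
M_\nu\bk{w_\alpha,\mu_\nu,\lambda} f_{\lambda,\nu}\bk{g} = \intl_{\mathcal N\bk{F_\nu}} f_{\lambda,\nu}\bk{\iota_\alpha\bk{\dot w n}\, g}\, dn.
\]

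Next, I would evaluate both compositions in the diagram at an element $h\in SL_2\bk{F_\nu}$. Using that $\iota_\alpha$ is a homomorphism, the upper-right composition yields
\[
\iota_\alpha^*\bk{M_\nu\bk{w_\alpha,\mu_\nu,\lambda} f_{\lambda,\nu}}\bk{h} = \intl_{\mathcal N\bk{F_\nu}} f_{\lambda,\nu}\bk{\iota_\alpha\bk{\dot w n h}}\, dn,
\]
while the lower-left composition yields
\[
M_{\dot w}\bk{\iota_\alpha^* f_{\lambda,\nu}}\bk{h} = \intl_{\mathcal N\bk{F_\nu}} f_{\lambda,\nu}\bk{\iota_\alpha\bk{\dot w n h}}\, dn,
\]
so the two expressions coincide. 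That $\iota_\alpha^* f_{\lambda,\nu}$ transforms on $\mathcal B\bk{F_\nu}$ by $\bk{\mu_\nu\otimes\lambda}\circ\iota_\alpha$, and that $M_{\dot w}\bk{\iota_\alpha^* f_{\lambda,\nu}}$ transforms by the corresponding $w_\alpha^{-1}$-twist appearing along the right-hand vertical arrow, follows by directly tracking the characters through $\iota_\alpha$.

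I expect the main obstacle to be the bookkeeping for the identification in the first step when $\bfG$ is quasi-split but non-split: the rank-one subgroup attached to a relative root $\alpha$ is defined over $F_\alpha$ rather than $F$, and the root subgroup $\bfU_\alpha$ need not be one-dimensional over $F$. Making the identification of the integration domain with $\mathcal N\bk{F_\nu}$ through $\iota_\alpha$, and checking compatibility of Haar measures under this identification, is the only delicate point. Once a consistent choice of representative $\iota_\alpha\bk{\dot w}$ and of Haar measure is fixed, commutativity of the diagram is immediate from the computation above.
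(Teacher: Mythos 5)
Your argument is correct and is the standard rank-one reduction: for a simple reflection the integration domain $\bfN\bk{F_\nu}\cap w_\alpha^{-1}\bfN\bk{F_\nu}w_\alpha\lmod\bfN\bk{F_\nu}$ collapses to the single root subgroup $\bfU_\alpha\bk{F_\nu}=\iota_\alpha\bk{\mathcal N\bk{F_\nu}}$, and both compositions unfold to the same one-parameter integral once the representative $\iota_\alpha\bk{\dot w}$ and Haar measures are fixed compatibly. The paper states this lemma without proof, treating it as standard, so there is nothing to compare against beyond noting that you have correctly identified the only delicate points (the field of definition $F_\alpha$ of the root subgroup in the quasi-split case, and the implicit $\rho$-shift bookkeeping in matching the inducing characters).
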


\subsection{Normalized Intertwining Operators}

For $\nu\in\Places$, $s\in\C$ and a unitary character $\sigma_\nu:\mathcal{T}\bk{F_\nu}\to\C^\times$, let $f_{s,\nu}^0\in \Ind_{\mathcal{B}\bk{F_\nu}}^{SL_2\bk{F_\nu}} \bk{\sigma_\nu\otimes \bk{s\cdot\rho_{\mathcal{B}}}}$ denote the spherical vector normalized so that $f_{s,\nu}^0\bk{1}=1$.
The rank-one Gindikin-Karpelevich formula states that
\begin{equation}
\label{Eq: Rank one Gindikin Karpelevich}
M\bk{\dot{w},\sigma_\nu,s} f_{s,\nu}^0 = \frac{\Lfun_{F_\nu}\bk{s,\sigma_\nu}}{\Lfun_{F_\nu}\bk{s+1,\sigma_\nu}} f_{-s,\nu}^0 .
\end{equation}
This formula suggests a normalization of intertwining operators so that normalized spherical vectors are sent to normalized spherical vectors.

For a unitary character $\mu_\nu:\bfT\bk{F_\nu}\to\C^\times$ and $\lambda\in\mathfrak{a}_{\bfT,\C}^\ast$, define the \emph{normalized intertwining operator} to be
\begin{equation}
\label{eq:Normalized_intertwining_operators}
N_\nu\bk{w,\mu_\nu,\lambda} =
\prodl_{\alpha>0,\ w^{-1}\alpha<0} \frac{\Lfun_{F_{\alpha,\nu}}\bk{\gen{\lambda,\check{\alpha}}+1 ,\mu_\nu\circ\check{\alpha}} }{ \Lfun_{F_{\alpha,\nu}}\bk{\gen{\lambda,\check{\alpha}} ,\mu_\nu\circ\check{\alpha}} \epsilon_{F_{\alpha,\nu}}\bk{\gen{\lambda,\check{\alpha}},\mu_\nu\circ\circ\check{\alpha}, \psi_\nu}}
M\bk{w,\mu_\nu,\lambda} .
\end{equation}


The normalized intertwining operator satisfy a cocycle condition similar to \Cref{eq:Global_Intertwining_Operator_Cocycle_Condition}.
Namely, for any $w, w'\in W$, it holds that
\begin{equation}
\label{eq:Local_Normalized_Intertwining_Operator_Cocycle_Condition}
N_\nu\bk{ww',\mu_\nu,\lambda} = N_\nu\bk{w',w^{-1}\cdot\mu_\nu,w^{-1}\cdot\lambda}\circ N_\nu\bk{w,\mu_\nu,\lambda} .
\end{equation}

Recall that for $\sigma_\nu$ unramified, it holds that $\epsilon_{F_\nu}\bk{s,\sigma_\nu,\psi_\nu}=1$.
If $\mu_\nu$ is unramified, let $f_{\lambda,\nu}^0$ be the spherical vector in $I_{\bfB,\nu}\bk{\mu_\nu,\lambda}$ so that $f_{\lambda,\nu}^0\bk{1}=1$.
It follows from \Cref{eq:Normalized_intertwining_operators}, \Cref{eq:Local_Normalized_Intertwining_Operator_Cocycle_Condition} and \Cref{Eq: Rank one Gindikin Karpelevich}, that
\begin{equation}
\label{Eq:Normalized_intertwining_operator_acting_on_spherical_vector}
N_\nu\bk{w,\mu,\lambda} f_{\lambda,\nu}^0 = f_{w^{-1}\cdot\lambda,\nu}^0.
\end{equation}


\subsection{The Gindikin-Karpelevich Formula}

For $\lambda\in\mathfrak{a}_{\bfT,\C}^\ast$ and a Hecke character $\mu:T\bk{F}\lmod T\bk{\A}\to\C^\times$ we define the \emph{local Gindikin-Karpelevich factor} to be
\begin{equation}
J_\nu\bk{w,\mu_\nu,\lambda}
= \prodl_{\alpha>0,\ w^{-1}\alpha<0} \frac{\Lfun_{F_{\alpha,\nu}}\bk{\gen{\lambda,\check{\alpha}} ,\mu_\nu\circ\check{\alpha}}}{\Lfun_{F_{\alpha,\nu}}\bk{\gen{\lambda,\check{\alpha}}+1 ,\mu_\nu\circ\check{\alpha}}}
\end{equation}
and the  \emph{global Gindikin-Karpelevich factor} is defined by
\begin{equation}
J\bk{w,\mu,\lambda} = \prodl_{\nu\in\Places}
J_\nu\bk{w,\mu_\nu,\lambda} 
= \prodl_{\alpha>0,\ w^{-1}\alpha<0} \frac{\Lfun_{F_{\alpha}}\bk{\gen{\lambda,\check{\alpha}} ,\mu\circ\check{\alpha}}}{\Lfun_{F_{\alpha}}\bk{\gen{\lambda,\check{\alpha}}+1 ,\mu\circ\check{\alpha}}} .
\end{equation}


Let $f_\lambda = \otimes' f_{\lambda,\nu}$ be a pure tensor in $I_{\bfB}\bk{\mu,\lambda}$ and let $S\subset\Places$ be a finite set so that $f_{\lambda,\nu}=f_{\lambda,\nu}^0$ for all $\nu\notin S$.

By \Cref{Eq:Decomposition_of_global_intertwining_operator_to_local_ones} and
\Cref{Eq:Normalized_intertwining_operator_acting_on_spherical_vector}, for any $w\in W$ and $\lambda\in\mathfrak{a}_{\bfT,\C}^\ast$ it holds that
\begin{equation}
\label{Eq: Global Gindikin-Karpelevich}
\begin{split}
& M\bk{w,\mu,\lambda} f_{\lambda}
 = \bk{\displaystyle\operatorname*{\otimes}_{\nu\in S} M_\nu\bk{w,\mu_\nu,\lambda} f_{\lambda,\nu} }\bigotimes \bk{\displaystyle\operatorname*{\otimes}_{\nu\notin S} J_\nu\bk{w,\mu_\nu,\lambda} f_{\lambda,\nu}^0} \\
& = J\bk{w,\mu,\lambda} \bk{\displaystyle\operatorname*{\otimes}_{\nu\in S}  J_\nu\bk{w,\mu_\nu,\lambda}^{-1} M_\nu\bk{w,\mu,\lambda}f_{\lambda,\nu} }\bigotimes \bk{\displaystyle\operatorname*{\otimes}_{\nu\notin S} f_{\lambda,\nu}^0} \\
& = \bk{\prodl_{\alpha>0,\ w^{-1}\alpha<0} \epsilon_{F_\alpha}\bk{\gen{\lambda,\check{\alpha}},\mu_\nu\circ\check{\alpha}}} J\bk{w,\mu,\lambda} \bk{\displaystyle\operatorname*{\otimes}_{\nu\in S}  N_\nu\bk{w,\mu_\nu,\lambda}f_{\lambda,\nu} }\bigotimes \bk{\displaystyle\operatorname*{\otimes}_{\nu\notin S} f_{\lambda,\nu}^0} \ .
\end{split}
\end{equation}

This implies that the analytic behaviour of $M\bk{w,\mu,\lambda} f_{\lambda}$ depends on the analytic behaviour of $J\bk{w,\mu,\lambda}$ and the $N_\nu\bk{w,\mu_\nu,\lambda}f_{\lambda,\nu}$ for $\nu\notin S$.

Since the partially normalized intertwining operators
\[
\frac{1}{\prodl_{\alpha>0,\ w^{-1}\alpha<0} \Lfun_{F_{\alpha,\nu}}\bk{\gen{\lambda,\check{\alpha}} ,\mu_\nu\circ\check{\alpha}}} M_\nu\bk{w,\mu_\nu,\lambda}
\]
are entire for all $\nu\in\Places$ (see \cite{MR944102} when $\nu\vert\infty$ and \cite{MR517138} when $\nu\not\vert\infty$), it follows that $N_\nu\bk{w_\alpha,\mu_\nu,\lambda}$ is holomorphic whenever $\Real\bk{\gen{\lambda,\check{\alpha}}}>-1$ and $\alpha\in\Delta$.

\subsection{Degenerate Eisenstein Series Attached to Maximal Parabolic Subgroups}
Let $\bfP=\bfM\cdot\bfU$ be the maximal parabolic subgroup of $\bfG$ associated to the set $\Delta\setminus\set{\alpha}$.
The space of characters $\mathfrak{a}_{\bfM,\C}^\ast$ is one-dimensional and we fix an isomorphism
\[
\begin{array}{ccl}
\C & \overset{\sim}{\to} & \mathfrak{a}_{\bfM,\C}^\ast \\
s & \mapsto & \Omega_{\bfP,s}=s\cdot\omega_\alpha .
\end{array}
\]
Also, for a Hecke character $\chi:F^\times\lmod\A^\times\to\C^\times$, we denote
\[
\mu_\chi = \chi\circ\omega_\alpha.
\]
In what follows, we replace $\Omega_{\bfP,s}$ by $s$ and $\mu_\chi$ by $\chi$ in all notations, e.g. $I_{\bfP}\bk{\chi,s}=I_{\bfP}\bk{\mu_\chi,\Omega_{\bfP,s}}$.

Revisiting \Cref{Eq:Induction_from_Parabolic_in_induction_from_Borel}, for
\begin{equation}
\begin{split}
& \lambda_s = \Omega_{\bfP,s}\otimes \FNorm{\rho_{\bfP}-\rho_{\bfB}}\\
& \eta_s = \Omega_{\bfP,s}\otimes \FNorm{\rho_{\bfB}-\rho_{\bfP}}, \\
\end{split}
\end{equation}
it holds that
\begin{equation}
\begin{split}
& I_{\bfP}\bk{\chi,s} \hookrightarrow 
I_{\bfB}\bk{\mu_\chi\otimes\lambda_s} \\
& I_{\bfB}\bk{\mu_\chi\otimes\eta_s} \twoheadrightarrow I_{\bfP}\bk{\chi,s} .
\end{split}
\end{equation}

In what follows, we write
\begin{equation}
\chi_s = \lambda_s\otimes\mu\chi
\end{equation}
and $M\bk{w,\chi_s}$ for the restriction of $M\bk{w,\mu_\chi,\lambda_s}$ to $I_{\bfP}\bk{\chi,s}$.


\begin{Lem}
	\label{NonvanishingifCTisNonvanishing}
	Let $f_s\in I_{\bfP}\bk{\chi,s}$ be a holomorphic section.
	Assume that $\Eisen_{\bfP}\bk{\chi,f,s}$ admits a pole of order $m$ at $s_0\in\C$ and let
	\[
	\varphi\bk{g} = \lim_{s=s_0} \coset{\bk{s-s_0}^m \Eisen_{\bfP}\bk{\chi,f,s,g}} .
	\]
	Let $\varphi_{\bfCT}$ denote the constant term of $\varphi$ along $\bfN$.
	Then, $\varphi\equiv 0$ if and only if $\varphi_{\bfCT}\equiv 0$.
\end{Lem}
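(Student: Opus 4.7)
The direction $\varphi\equiv 0\Rightarrow \varphi_\bfCT\equiv 0$ is immediate, so the real content lies in the converse. My plan is to show that vanishing of $\varphi_\bfCT$ forces $\varphi$ to be cuspidal, and then to exploit that $\varphi$, being a residue of an Eisenstein series from a proper parabolic, cannot be a non-zero cusp form.

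First, I would use \Cref{Prop:Equality_of_Eisenstein_series} to rewrite
\[
\Eisen_{\bfP}\bk{\chi,f,s,g} = \Eisen_{\bfB}\bk{\mu_\chi,f,\lambda_s,g}
\]
under the natural inclusion $I_{\bfP}\bk{\chi,s}\hookrightarrow I_{\bfB}\bk{\mu_\chi,\lambda_s}$, so that $\varphi$ is realized as the leading term at $s_0$ of an Eisenstein series induced from the Borel. In particular, the cuspidal support of $\varphi$ lies along $\bfT$.

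Next, I would argue by induction on the semisimple rank of standard Levi subgroups that the constant term $\varphi_{\bfQ}$ along every standard parabolic $\bfQ=\bfM_{\bfQ}\bfU_{\bfQ}$ vanishes. The key identity is $\bfN=\bk{\bfN\cap\bfM_{\bfQ}}\cdot\bfU_{\bfQ}$, which yields
\[
\varphi_\bfCT\bk{g} = \intl_{\bk{\bfN\cap\bfM_{\bfQ}}\bk{F}\lmod \bk{\bfN\cap\bfM_{\bfQ}}\bk{\A}} \varphi_{\bfQ}\bk{mg}\,dm.
\]
Hence, viewed as an automorphic form on $\bfM_{\bfQ}\bk{\A}$, the constant term of $\varphi_{\bfQ}$ along the Borel $\bfB\cap\bfM_{\bfQ}$ coincides with $\varphi_\bfCT\equiv 0$. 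Since $\varphi_{\bfQ}$ is itself a residue of an Eisenstein series on $\bfM_{\bfQ}$ induced from its Borel (obtained by taking the constant term of $\Eisen_{\bfB}$ along $\bfU_{\bfQ}$ and using the constant-term formula \Cref{Eq:Constant_term}), the inductive hypothesis forces $\varphi_{\bfQ}\equiv 0$. Consequently, $\varphi$ is cuspidal.

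Finally, I would invoke that $\varphi$ is orthogonal to every cusp form $\psi$ on $\bfG\bk{\A}$: the pairing $\bk{\Eisen_{\bfP}\bk{\chi,f,s,\cdot},\psi}$ unfolds to an integral of $\psi$ against a section over $\bfP\bk{F}\lmod \bfG\bk{\A}$, which vanishes since the constant term of $\psi$ along $\bfP$ is zero; this vanishing persists after extracting the residue of order $m$ at $s_0$. Since cuspidal automorphic forms on reductive groups are rapidly decreasing and hence square-integrable, a cuspidal $\varphi$ that is also orthogonal to every cusp form must vanish. The main obstacle is the inductive step: one has to verify that $\varphi_{\bfQ}$ genuinely fits the hypothesis of the lemma applied to $\bfM_{\bfQ}$ (in particular, that the residue structure and holomorphicity of the inducing section propagate under the constant-term operation), so that the induction is correctly set up on every standard Levi. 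An alternative, possibly cleaner, route would be to invoke Langlands' spectral theory directly to deduce that automorphic forms with cuspidal support $\coset{\bfT,\mu}$ are determined by their Borel constant term, thereby bypassing the induction.
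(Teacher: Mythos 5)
Your proposal is correct and follows essentially the same route as the paper: the paper's proof simply asserts that the cuspidal support of $I_{\bfP}\bk{\chi,s}$ lies along $\bfB$ (which is exactly what your induction on standard Levi subgroups is unpacking), concludes that $\varphi_{\bfCT}\equiv 0$ forces $\varphi$ to be cuspidal, and then invokes the orthogonality of the cuspidal spectrum to Eisenstein series to get $\varphi\equiv 0$. The alternative you mention at the end --- appealing directly to the spectral theory of automorphic forms with cuspidal support $\coset{\bfT,\mu}$ --- is precisely the shortcut the paper takes, so the only difference is your level of detail, not the argument.
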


\begin{proof}
	the proof here is similar to that of \Cref{Prop:Equality_of_Eisenstein_series}.	
	It is enough to prove that if $\varphi_{\bfCT}\equiv 0$ then $\varphi\equiv 0$.
	By the construction, the cuspidal support of $I_{\bfP}\bk{\chi,s}$ lies along $\bfB$ and hence, $\varphi_{\bfCT}\equiv 0$ implies that $\varphi$ is cuspidal.
	However, the cuspidal spectrum is orthogonal to Eisenstein series and hence $\varphi\equiv 0$.
	
	%
	%
\end{proof}

\begin{Cor}
	\label{Cor:Kernel_of_Series_is_kernel_of_CT}
	Under the assumptions of the previous lemma,
	\begin{equation}
	\label{Eq:Kernel_of_Series_is_kernel_of_CT}
	\begin{array}{l}
	Span_{\C}\set{\lim\limits_{s\to s_0} \bk{s-s_0}^m \Eisen_P\bk{\chi,f_s} \mvert f_s\in I_{\bfP}\bk{\chi,s}} \\
	\cong Span_{\C} \set{\lim\limits_{s\to s_0} \bk{s-s_0}^m \suml_{w\in W_M\lmod W} M\bk{w,\chi_s}f_s\res{\bfT\bk{\A}} \mvert f_s\in I_{\bfP}\bk{\chi,s}}
	\end{array}
	\end{equation}
\end{Cor}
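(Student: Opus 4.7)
The plan is to exhibit the isomorphism in \Cref{Eq:Kernel_of_Series_is_kernel_of_CT} as the constant-term map, and to verify bijectivity by combining the constant-term formula with \Cref{NonvanishingifCTisNonvanishing}. The key observation is that the previous lemma already establishes that the constant-term map is injective on each individual residual function, so the corollary is essentially its linearization over the span.

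To establish surjectivity, I would apply the constant-term formula \Cref{Eq:Constant_term} to obtain
\[
\Eisen_{\bfP}\bk{\chi,f,s,g}_{\bfCT} = \suml_{w\in W\bk{\bfM,\bfG}} M\bk{w,\chi_s} f_s\bk{g}.
\]
Since the constant-term integration is over the compact quotient $\bfN\bk{F}\lmod \bfN\bk{\A}$ and is independent of the spectral parameter $s$, the residue $\lim_{s\to s_0}(s-s_0)^m$ commutes with this integration. Restricting both sides to $\bfT\bk{\A}$ then exhibits the constant-term map as a linear surjection from the first span in \Cref{Eq:Kernel_of_Series_is_kernel_of_CT} onto the second. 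Injectivity, in turn, follows from \Cref{NonvanishingifCTisNonvanishing}: if $\varphi = \lim_{s\to s_0}(s-s_0)^m \Eisen_{\bfP}\bk{\chi,f,s}$ satisfies $\varphi_{\bfCT}\res{\bfT\bk{\A}} \equiv 0$, then using left-$\bfN\bk{\A}$-invariance together with the $\bfB$-equivariance of each summand $M\bk{w,\chi_s} f_s$ (viewed inside a principal series induced from $\bfB$) and the Iwasawa decomposition, one deduces $\varphi_{\bfCT}\equiv 0$ on all of $\bfG\bk{\A}$, which forces $\varphi \equiv 0$ by the previous lemma. By linearity, this upgrades to injectivity on the span, completing the isomorphism.

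The main --- and rather mild --- technical point is the passage from vanishing on $\bfT\bk{\A}$ to vanishing on $\bfG\bk{\A}$, which amounts to unwinding the transformation properties of the principal-series sections appearing in the constant-term formula. This is essentially bookkeeping in Iwasawa coordinates and does not present a substantive obstacle.
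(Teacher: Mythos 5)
Your proposal is correct and is essentially the paper's own argument: the paper introduces the two maps $\textbf{R}$ and $\textbf{R}_\bfCT$ with common source $I_{\bfP}\bk{\chi,s}$, notes $\textbf{R}_\bfCT=\bfCT\circ\textbf{R}$ via the constant term formula, and identifies both spans with the quotient of $I_{\bfP}\bk{\chi,s}$ by the common kernel supplied by \Cref{NonvanishingifCTisNonvanishing} — which is precisely your ``constant-term map is a bijection between the spans'' formulation. The one step you flag (upgrading vanishing of the constant term on $\bfT\bk{\A}$ to vanishing on all of $\bfG\bk{\A}$, which needs the $K$-dependence and not just $\bfB$-equivariance at the identity) is left implicit in the paper as well, so no divergence there.
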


\begin{proof}
	We consider two maps $\textbf{R}$ and $\textbf{R}_\bfCT$ given by
	\begin{align*}
	I_{\bfP}\bk{\chi,s} & \overset{\textbf{R}}{\longrightarrow} Span_{\C}\set{\lim\limits_{s\to s_0} \bk{s-s_0}^m \Eisen_P\bk{\chi,f_s} \mvert f_s\in I_{\bfP}\bk{\chi,s}} \\
	f_s & \mapsto \varphi = \lim\limits_{s\to s_0} \bk{s-s_0}^m \Eisen_P\bk{\chi,f_s} \\
	I_{\bfP}\bk{\chi,s} & \overset{\textbf{R}_\bfCT}{\longrightarrow} Span_{\C} \set{\lim\limits_{s\to s_0} \bk{s-s_0}^m \suml_{w\in W_M\lmod W} M\bk{w,\chi_s}f_s\res{\bfT\bk{\A}} \mvert f_s\in I_{\bfP}\bk{\chi,s}} \\
	f_s & \mapsto \varphi_{\bfCT} = \lim\limits_{s\to s_0} \bk{s-s_0}^m \suml_{w\in W_M\lmod W} M\bk{w,\chi_s}f_s\res{\bfT\bk{\A}} .
	\end{align*}
	Obviously, $\textbf{R}_\bfCT= \bfCT\circ \textbf{R}$ and by the previous lemma $\ker\bk{\textbf{R}}=\ker\bk{\textbf{R}_\bfCT}$.
	It follows that both sides of \Cref{Eq:Kernel_of_Series_is_kernel_of_CT} are isomorphic to
	\[
	I_{\bfP}\bk{\chi,s}\rmod ker\bk{\textbf{R}} \cong I_{\bfP}\bk{\chi,s}\rmod ker\bk{\textbf{R}_\bfCT} .
	\]
	
\end{proof}

\subsection{The Constant Term Formula Revisited}
Let $\bfP=\bfM\cdot\bfU$ be the maximal parabolic subgroup of $\bfG$ associated to the set $\Delta\setminus\set{\alpha}$.
We consider \Cref{Eq:Constant_term} in light of \Cref{Eq: Global Gindikin-Karpelevich}.

For $s_0\in\C$ and a Hecke character $\chi:F^\times\lmod \A^\times\to\C^\times$ let
\[
n=\sup\set{\operatorname{ord}_{s=s_0} M\bk{w,\chi,\lambda_s}f_s\bk{g} \mvert w\in W\bk{\bfM,\bfG},\ f_s\in I_{\bfP}\bk{\chi,s},\ g\in \bfG\bk{\A}},
\]
where the order $\operatorname{ord}_{s=s_0}h\bk{s}$ of a pole of a complex function $h\bk{s}$ at $s_0$ is the unique integer $n$ such that
\[
\lim\limits_{s\to s_0}\bk{s-s_0}^n h\bk{s}\in\C^\times .
\]

By \Cref{Eq: Global Gindikin-Karpelevich}, $n$ is finite for $s_0>0$.
We assume that $n>0$.

For $w\in W\bk{\bfM,\bfG}$, we denote
\[
ord_{s=s_0} M\bk{w,\chi_s} = \sup\set{\operatorname{ord}_{s=s_0} M\bk{w,\chi_s}f_s\bk{g} \mvert f_s\in I_{\bfP}\bk{\chi,s},\ g\in \bfG\bk{\A}}.
\]

For $0 < m\leq n$ let
\[
\Sigma^{\bfP}_{\bk{\chi,s_0,m}} = \set{w\in W\bk{\bfM,\bfG} \mvert \operatorname{ord}_{s=s_0} M\bk{w,\chi_s} \geq m} .
\]
We say that the pole of order $m$ cancels if
\begin{align*}
& \lim\limits_{s\to s_0} \bk{s-s_0}^m \suml_{w\in W\bk{\bfM,\bfG}} M\bk{w,\chi_s} \\
& = \lim\limits_{s\to s_0} \bk{s-s_0}^m \suml_{w\in \Sigma^{\bfP}_{\bk{\chi,s_0,m}}} M\bk{w,\chi_s} \equiv 0 .
\end{align*}


After, maybe, cancellation of higher orders of a pole, we wish to determine its actual order.
Namely, for $0<m\leq n$ we say that $\Eisen_{\bfP}\bk{\chi,\lambda_s}_{\bfCT}$ admits a pole of order $m$ at $s_0$ if
\begin{align*}
& \lim\limits_{s\to s_0} \bk{s-s_0}^{m+1} \suml_{w\in W\bk{\bfM,\bfG}} M\bk{w,\chi_s} \\
& = \lim\limits_{s\to s_0} \bk{s-s_0}^{m+1} \suml_{w\in \Sigma^{\bfP}_{\bk{\chi,s_0,m+1}}} M\bk{w,\chi_s} \equiv 0
\end{align*}
and
\begin{align*}
& \lim\limits_{s\to s_0} \bk{s-s_0}^m \suml_{w\in W\bk{\bfM,\bfG}} M\bk{w,\chi_s} \\
& = \lim\limits_{s\to s_0} \bk{s-s_0}^m \suml_{w\in \Sigma^{\bfP}_{\bk{\chi,s_0,m}}} M\bk{w,\chi_s} \not\equiv 0.
\end{align*}

In particular, for any holomorphic section $f_s\in I_{\bfP}\bk{\chi,s}$ and any $t\in T_E\bk{\A}$, it holds that
\[
\lim\limits_{s\to s_0} \bk{s-s_0}^m
\suml_{w\in W\bk{\bfM,\bfG}} M\bk{w,\chi_s}f_s\bk{t} \in \C 
\]
and the limit is non-zero for some $f_s\in I_{\bfP}\bk{\chi,s}$ and $t\in T_E\bk{\A}$.

We define an equivalence relation on $\Sigma^{\bfP}_{\bk{\chi,s_0,m}}$ by:
\begin{equation}
\label{Equivalence relation on Sigma-m}
w\sim_{\bk{\chi,s_0}} w' \Longleftrightarrow w^{-1}\cdot\bk{\chi_{s_0}}=w'^{-1}\cdot\bk{\chi_{s_0}} .
\end{equation}
Clearly, cancellations of poles of intertwining operators can occur only within the same equivalence class.
It is thus useful to write the above sums as follows
\begin{align*}
& \lim\limits_{s\to s_0} \bk{s-s_0}^m \suml_{w\in W\bk{\bfM,\bfG}} M\bk{w,\chi_s}\\
& = \lim\limits_{s\to s_0} \bk{s-s_0}^m \suml_{\coset{w'}\in \Sigma^{\bfP}_{\bk{\chi,s_0,m}}\rmod\sim_{\bk{\chi,s_0}}} \coset{\suml_{w\in\coset{w'}} M\bk{w,\chi_s} } .
\end{align*}
In view of \Cref{Cor:Kernel_of_Series_is_kernel_of_CT}, in order to calculate the kernel of $\lim_{s=s_0} \coset{\bk{s-s_0}^m \Eisen_{\bfP}\bk{\chi,f,s,g}}$, it is enough to calculate the kernel of the above sum.
Since cancellations happen only within equivalency classes, it follows that
\[
\ker\bk{\lim_{s=s_0} \bk{s-s_0}^m \Eisen_{\bfP}\bk{\chi,f,s,g}} = \bigcap_{\coset{w'}\in \Sigma^{\bfP}_{\bk{\chi,s_0,m}}\rmod\sim_{\bk{\chi,s_0}}} \coset{\lim\limits_{s\to s_0} \bk{s-s_0}^m \suml_{w\in\coset{w'}} M\bk{w,\chi_s}} .
\]


%
%
%

\subsection{The Langlands Quotient Theorem}
We recall the Langlands classification, which will be used in \Cref{Sec_Local_Representations}. For a more detailed discussion, the reader may consult \cite[Chapter IV, Sec. XI.2]{MR1721403}.
We fix a place $\nu\in\Places$.


\begin{Thm}[Langlands' Unique Irreducible Quotient Theorem]
	Let $\bfP=\bfM\cdot\bfU$ be a Levi subgroup of $\bfG$ and let $\sigma_\nu$ be a tempered representation of $\bfM\bk{F_\nu}$.
	Also let $\lambda\in\mathfrak{a}_{\bfM,\C}^\ast$ satisfy
	\begin{equation}
	\label{Eq:Standard_Module_Positivity_Condition}
	\gen{\lambda,\check{\alpha}}>0,\quad \forall \alpha\in\Delta_M.
	\end{equation}
	Then, $\Ind_{\bfM\bk{F_\nu}}^{\bfG\bk{F_\nu}}\bk{\sigma_\nu\otimes\lambda}$ admits a unique irreducible quotient which is the image of $M_\nu\bk{w,\sigma_\nu,\lambda}$, where $w$ is the representative in $W\bk{\bfM,\bfG}$ of the coset of the longest Weyl element $w_l\in W$.
\end{Thm}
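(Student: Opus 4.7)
The plan is to follow the standard Langlands classification strategy: establish convergence (and nonvanishing) of the long intertwining operator $M_\nu(w,\sigma_\nu,\lambda)$, then show via Jacquet modules that its image is the unique irreducible quotient of the induced representation. I break this into three steps: convergence, identification of the image with an irreducible quotient, and uniqueness via exponent analysis.

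First, I would prove that $M_\nu(w,\sigma_\nu,\lambda)$ converges absolutely and has nonzero image. Via \Cref{Lemma:Intertwining_operator_of_simple_reflections} and the cocycle relation \Cref{eq:Local_Normalized_Intertwining_Operator_Cocycle_Condition}, the operator decomposes as a composition of rank-one operators, one for each positive root $\alpha$ sent to a negative root by $w^{-1}$. Because $w$ is the shortest representative of its $W_\bfM$-coset in $W\bk{\bfM,\bfG}$, these roots all lie in $\Phi^{+} \setminus \Phi_\bfM^{+}$; the (correctly stated) positivity hypothesis $\gen{\lambda,\check{\alpha}}>0$ for $\alpha \in \Delta \setminus \Delta_\bfM$, combined with the temperedness of $\sigma_\nu$, places each rank-one integral in its absolute convergence region. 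Nonvanishing of the composed image then follows by evaluating on a suitable $K$-finite test vector at which each rank-one Gindikin-Karpelevich factor is nonzero, so that the image is a nonzero quotient of $\Ind_{\bfP\bk{F_\nu}}^{\bfG\bk{F_\nu}}\bk{\sigma_\nu\otimes\lambda}$ sitting inside $\Ind_{\bfP^{-}\bk{F_\nu}}^{\bfG\bk{F_\nu}}\bk{\sigma_\nu\otimes\lambda}$, where $\bfP^{-}$ is the parabolic opposite to $\bfP$.

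Second, I would identify this image as the unique irreducible quotient by Jacquet-module analysis. By Frobenius reciprocity (or second adjointness), any irreducible quotient of $\Ind_{\bfP\bk{F_\nu}}^{\bfG\bk{F_\nu}}\bk{\sigma_\nu\otimes\lambda}$ must have $\sigma_\nu \otimes \lambda$ appearing as a subquotient of its Jacquet module along $\bfP^{-}$. Casselman's geometric lemma (or its archimedean analogue) filters the Jacquet module of the induced representation along $\bfP^{-}$ by double cosets in $W_\bfM \backslash W / W_\bfM$; the layer indexed by $w$ contributes exactly $\sigma_\nu \otimes \lambda$, while every other layer contributes representations whose central exponent on $A_\bfM$ is strictly dominated by $\lambda$ on the positive chamber (this is precisely the content of the Langlands positivity condition combined with $\sigma_\nu$ being tempered). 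Matching leading exponents then forces any irreducible quotient to coincide with the nonzero image produced in the first step, establishing both existence and uniqueness.

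The main obstacle is the archimedean case of step three, where the clean Jacquet-module argument must be replaced by Harish-Chandra's asymptotic expansion of $K$-finite matrix coefficients: one must carefully track leading exponents along the dominant cone in $A_\bfM\bk{F_\nu}$ and use the strict positivity of $\lambda$ together with the temperedness of $\sigma_\nu$ to ensure that no two constituents share the leading exponent, so that the decomposition into Langlands data is unambiguous. This is the content of \cite[Chapter IV, \S XI.2]{MR1721403}. In the non-archimedean case the argument reduces to Casselman's second adjointness and finiteness of the Jacquet functor, and is essentially formal once the geometric lemma has been invoked.
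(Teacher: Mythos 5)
The paper offers no proof of this statement: it is recalled as a known classification theorem with a pointer to \cite[Chapter IV, Sec. XI.2]{MR1721403}, so there is no in-paper argument to measure yours against. Your outline is the standard proof (convergence of the long intertwining operator on the positive chamber, nonvanishing of its image, uniqueness by comparison of leading exponents in the Jacquet module, or in the asymptotic expansion of matrix coefficients at an Archimedean place), and it is correct as a strategy. You were also right to read the positivity condition as $\gen{\lambda,\check{\alpha}}>0$ for $\alpha\in\Delta\setminus\Delta_{\bfM}$: as printed in the statement the pairing is taken over $\Delta_{\bfM}$, where it vanishes identically for $\lambda\in\mathfrak{a}^\ast_{\bfM,\C}$, so the intended condition must be the complementary one.

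Two steps are stated more loosely than they can be carried out. First, the decomposition into rank-one operators for a general tempered $\sigma_\nu$ does not literally go through \Cref{Lemma:Intertwining_operator_of_simple_reflections}, which concerns principal series induced from the Borel; for $\sigma_\nu$ not embedded in such a principal series one factors $M_\nu\bk{w,\sigma_\nu,\lambda}$ through the standard parabolic subgroups containing $\bfP$ of relative corank one, and the convergence estimate uses temperedness of $\sigma_\nu$ (control of its matrix coefficients) together with the strict positivity of $\gen{\lambda,\check{\alpha}}$, rather than a root-by-root Gindikin--Karpelevich computation. Second, nonvanishing of the image cannot be tested on ``a vector at which each Gindikin--Karpelevich factor is nonzero'' unless the data are unramified; the standard device is to take a section supported on the open cell $\bfP w \bfN_w$, for which $M_\nu\bk{w,\sigma_\nu,\lambda}f\bk{e}$ reduces to a convergent integral of a compactly supported function that can be chosen to have nonzero integral. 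With those two repairs, and with the exponent comparison in your third step made precise via Langlands' combinatorial lemma, your argument is the one in the cited reference.
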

In this theorem, $M_\nu\bk{w,\sigma_\nu,\lambda}$ is constructed in a similar way to \Cref{Eq:Local_Intertwining_Operator_Def}.
In particular, if $\sigma$ is a subrepresentation of some $\Ind_{\bfB\bk{F_\nu}\cap \bfM\bk{F_\nu}}^{\bfM\bk{F_\nu}}\chi$, then $M_\nu\bk{w,\sigma_\nu,\lambda}$ equals the restriction of $M_\nu\bk{w,\chi,\lambda}$.

In this case, we say that $\Ind_{\bfM\bk{F_\nu}}^{\bfG\bk{F_\nu}}\bk{\sigma_\nu\otimes\lambda}$ is a \emph{standard module}.
In fact, Langlands have shown that every irreducible admissible representation of $\bfG\bk{F_\nu}$ can be attained as a Langlands quotients of some standard module and that the inducing data is unique up to conjugation.


\begin{Remark}
	We note that there is an equivalent form of this theorem in terms of subrepresentations (see \cite{MR2490651}).
	Assuming that $\lambda$ satisfy
	\[
	\gen{\lambda,\check{\alpha}}<0,\quad \forall \alpha\in\Delta_M,
	\]
	the induction $\Ind_{\bfM\bk{F_\nu}}^{\bfG\bk{F_\nu}}\bk{\sigma_\nu\otimes\lambda}$ admits a unique irreducible subrepresentation.
	This subrepresentation is the kernel of $M_\nu\bk{w,\sigma,\lambda}$.
\end{Remark}

\subsection{Harish-Chandra's Commuting Algebra Theorem and the $R$-group}
\label{Subsec:R_groups}

We recall Harish-Chandra's commuting algebra theorem, the definition of the $R$-groups and a few properties of it.
For further information, the reader may consult \cite{MR582703} when $\nu\vert\infty$ and \cite{MR675406} when $\nu\not\vert\infty$.


Let $\nu$ be a place of $F$.

\begin{Thm}[Harish-Chandra's Commuting Algebra Theorem]
	For a unitary character $\mu:\bfT\bk{F_\nu}\to\C^\times$, the representation $I_{\bfB,\nu}\bk{\mu,\bar{0}}$ is semi-simple and, by Harish-Chandra's commuting algebra theorem, its endomorphism ring $End\bk{I_{\bfB,\nu}\bk{\mu,\bar{0}}}$ is spanned by the intertwining operators $N_\nu\bk{w,\mu,\bar{0}}$ such that $w\in \Stab_W\bk{\mu}$.
\end{Thm}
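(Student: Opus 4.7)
The approach has three parts: establish semisimplicity from unitarity, check that each $N_\nu\bk{w,\mu,\bar{0}}$ with $w\in\Stab_W\bk{\mu}$ is a well-defined endomorphism, and match the dimension of the endomorphism ring with $\Card{\Stab_W\bk{\mu}}$ via Frobenius reciprocity.

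Since $\mu$ is unitary and $\bar{0}$ lies on the unitary axis, $I_{\bfB,\nu}\bk{\mu,\bar{0}}$ carries a natural $\bfG\bk{F_\nu}$-invariant positive-definite Hermitian form, obtained by pairing restrictions of sections to a fixed maximal compact subgroup $K_\nu\subset\bfG\bk{F_\nu}$. Combined with admissibility, this forces $I_{\bfB,\nu}\bk{\mu,\bar{0}}$ to decompose as a finite orthogonal direct sum of irreducibles, yielding semisimplicity.

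For $w\in\Stab_W\bk{\mu}$ we have $w^{-1}\cdot\mu=\mu$ and $w^{-1}\cdot\bar{0}=\bar{0}$, so $N_\nu\bk{w,\mu,\bar{0}}$ formally maps $I_{\bfB,\nu}\bk{\mu,\bar{0}}$ to itself. Writing $w$ as a product of simple reflections and applying the cocycle relation \Cref{eq:Local_Normalized_Intertwining_Operator_Cocycle_Condition}, one decomposes $N_\nu\bk{w,\mu,\bar{0}}$ as a composition of rank-one normalized operators; each such factor is holomorphic at the relevant parameter by the Gindikin--Karpelevich observation recorded at the end of that subsection (holomorphy whenever $\Real\bk{\gen{\lambda,\check{\alpha}}}>-1$), so the composite is a well-defined endomorphism at $\lambda=\bar{0}$.

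By the second adjointness for the Jacquet functor $J_\bfB$,
\[
End\bk{I_{\bfB,\nu}\bk{\mu,\bar{0}}}\cong \Hom_{\bfT\bk{F_\nu}}\bk{\mu,J_\bfB\, I_{\bfB,\nu}\bk{\mu,\bar{0}}},
\]
and the Bruhat decomposition of $\bfG\bk{F_\nu}$ endows $J_\bfB\, I_{\bfB,\nu}\bk{\mu,\bar{0}}$ with a finite filtration whose successive quotients are the characters $w^{-1}\cdot\mu$ for $w\in W$, giving an upper bound of $\Card{\Stab_W\bk{\mu}}$ on the right-hand side. In the other direction, the family $\set{N_\nu\bk{w,\mu,\bar{0}} \mvert w\in\Stab_W\bk{\mu}}$ is linearly independent, as can be seen by tracking the Bruhat cells on which each operator shifts the support of a test section, so these operators span. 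The main obstacle lies precisely in this last step, which combines the Bruhat-filtration analysis of the Jacquet module with the non-vanishing of each normalized operator; the required technical arguments differ noticeably between the Archimedean case (\cite{MR582703}) and the non-Archimedean case (\cite{MR675406}), which is why the theorem is customarily invoked rather than proved from scratch.
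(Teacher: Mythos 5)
First, note that the paper does not prove this statement at all: it is recalled as a classical theorem, with the proof delegated to \cite{MR582703} for $\nu\vert\infty$ and \cite{MR675406} for $\nu\not\vert\infty$. So there is no in-paper argument to compare against; the question is only whether your sketch would stand on its own. The first two parts of your plan are fine: unitarity plus admissibility does give semisimplicity, and for $w\in\Stab_W\bk{\mu}$ the operator $N_\nu\bk{w,\mu,\bar{0}}$ is a well-defined endomorphism by the cocycle relation and the holomorphy of the rank-one normalized operators at $\gen{\bar{0},\check{\alpha}}=0>-1$. The Frobenius/geometric-lemma bound $\dim_\C End\bk{I_{\bfB,\nu}\bk{\mu,\bar{0}}}\leq\Card{\Stab_W\bk{\mu}}$ is also correct (at least non-Archimedeanly).

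The genuine gap is your final step. The family $\set{N_\nu\bk{w,\mu,\bar{0}}\mvert w\in\Stab_W\bk{\mu}}$ is \emph{not} linearly independent in general; this is precisely the content of the $R$-group theory recalled immediately after this theorem in the paper. The subgroup $\mathcal{K}_W\bk{\mu}$ consists exactly of those $w$ for which $N_\nu\bk{w,\mu,\bar{0}}$ is a scalar multiple of the identity, and whenever $\mathcal{K}_W\bk{\mu}\neq\set{1}$ the operators $N_\nu\bk{w,\mu,\bar{0}}$ for $w\in\mathcal{K}_W\bk{\mu}$ are all proportional to $N_\nu\bk{1,\mu,\bar{0}}=I$, an obvious linear dependence; correspondingly $\dim_\C End\bk{I_{\bfB,\nu}\bk{\mu,\bar{0}}}=\Card{\mathcal{R}_W\bk{\mu}}$, which is strictly smaller than $\Card{\Stab_W\bk{\mu}}$ in that case. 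Already for the irreducible unramified unitary principal series of $SL_2\bk{F_\nu}$ with $\mu=\Id$ one has $\Stab_W\bk{\mu}=W$ of order $2$ but $End=\C$. Consequently your inference ``linearly independent, hence spanning by the dimension bound'' collapses: the hard content of the completeness theorem is exactly the spanning statement, and it cannot be obtained by matching $\Card{\Stab_W\bk{\mu}}$ against the Jacquet-module bound. One instead has to show that the injection $End\bk{I_{\bfB,\nu}\bk{\mu,\bar{0}}}\hookrightarrow\Hom_{\bfT\bk{F_\nu}}\bk{J_\bfB I_{\bfB,\nu}\bk{\mu,\bar{0}},\mu}$ lands inside the span of the images of the $N_\nu\bk{w,\mu,\bar{0}}$, which is the substance of the Knapp--Stein/Silberger arguments you defer to at the end; as written, the proposal does not supply that step.
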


Let 
\[
\mathcal{K}_W\bk{\mu} = \set{w\in \Stab_W\bk{\mu}\mvert N_\nu\bk{w,\mu,\bar{0}}=c\cdot I,\, c\in\C}
\]
and let $\mathcal{R}_W\bk{\mu}$ denote the quotient group $\Stab_W\bk{\mu}\rmod \mathcal{K}_W\bk{\mu}$.

The following theorem summarizes some of the properties of $\mathcal{R}_W\bk{\mu}$ and its connection to the structure of $I_{\bfB,\nu}\bk{\mu,\bar{0}}$.
For more details, consider \cite{MR582703} when $\nu\vert\infty$ or \cite{MR620252,MR1141803,MR517138,MR675406} when $\nu\not\vert\infty$.
Essentially, it is an analogue of the Artin-Wedderburn theorem.
\begin{Thm}
	The following hold:
	\begin{enumerate}		
		\item The exact sequence
		\[
		\set{1}\to \mathcal{K}_W\bk{\mu} \to \Stab_W\bk{\mu} \to \mathcal{R}_W\bk{\mu} \to \set{1}
		\]
		splits and $\Stab_W\bk{\mu}=\mathcal{R}_W\bk{\mu} \ltimes \mathcal{K}_W\bk{\mu}$.
		
		\item $End\bk{I_{\bfB,\nu}\bk{\mu,\bar{0}}}\cong \C\coset{\mathcal{R}_W\bk{\mu}}$.
		
		\item $\dim_\C\bk{End\bk{I_{\bfB,\nu}\bk{\mu,\bar{0}}}}=\FNorm{\mathcal{R}_W\bk{\mu}}$
		
		\item The number of inequivalent irreducible components in $I_{\bfB,\nu}\bk{\mu,\bar{0}}$ equals the number of conjugacy classes in $\mathcal{R}_W\bk{\mu}$.
		
		\item $I_{\bfP,\nu}\bk{\mu,\bar{0}}$ decomposes wit h multiplicities equal to 1 if and only if $\mathcal{R}_W\bk{\mu}$ is Abelian.
		
		\item If $\C\coset{\mathcal{R}_W\bk{\mu}} = M_{n_1}\bk{\C} \oplus\dots\oplus M_{n_2}\bk{\C}$, then $I_{\bfB,\nu}\bk{\mu,\bar{0}}$ admits $k$ inequivalent irreducible subrepresentations with multiplicities  $n_1$,..., $n_k$.
	\end{enumerate}
\end{Thm}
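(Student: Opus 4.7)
The plan is to deduce all six items from Harish-Chandra's commuting algebra theorem (already stated), the cocycle relation \Cref{eq:Local_Normalized_Intertwining_Operator_Cocycle_Condition} for normalized operators, and the Artin--Wedderburn structure theorem for finite-dimensional semi-simple $\C$-algebras. Write $V=I_{\bfB,\nu}\bk{\mu,\bar{0}}$ and $A=End\bk{V}$. By Harish-Chandra, $V$ is semi-simple, $\dim_\C A <\infty$, and $A$ is linearly spanned by the operators $N_\nu\bk{w,\mu,\bar{0}}$ with $w\in\Stab_W\bk{\mu}$.

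For (1), restrict the cocycle identity \Cref{eq:Local_Normalized_Intertwining_Operator_Cocycle_Condition} to $w,w'\in \Stab_W\bk{\mu}$ at $\lambda=\bar{0}$, so that $N_\nu\bk{ww',\mu,\bar{0}} = N_\nu\bk{w',\mu,\bar{0}}\circ N_\nu\bk{w,\mu,\bar{0}}$. Thus $w\mapsto N_\nu\bk{w,\mu,\bar{0}}$ is a group homomorphism from $\Stab_W\bk{\mu}$ into $A^\times$, and $\mathcal{K}_W\bk{\mu}$ is precisely its kernel modulo scalars, making it a normal subgroup. The splitting is the classical fact that $\mathcal{K}_W\bk{\mu}$ is generated by reflections along the roots $\alpha$ for which the rank-one operator $N_\nu\bk{w_\alpha,\mu,\bar{0}}$ is scalar, so $\mathcal{K}_W\bk{\mu}$ is itself a Weyl group and the roots fixing $\mu$ but not scalar give a complementary reflection subgroup representing $\mathcal{R}_W\bk{\mu}$.

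For (2) the cocycle shows that $w\mapsto N_\nu\bk{w,\mu,\bar{0}}$ descends to an algebra homomorphism $\C\coset{\mathcal{R}_W\bk{\mu}}\to A$; surjectivity comes from Harish-Chandra, and linear independence of the $N_\nu\bk{w,\mu,\bar{0}}$ along a transversal of $\mathcal{K}_W\bk{\mu}$ is immediate from the very definition of $\mathcal{K}_W\bk{\mu}$ as the set of $w$ for which $N_\nu\bk{w,\mu,\bar{0}}$ is scalar. Item (3) is then simply the count of group elements in $\mathcal{R}_W\bk{\mu}$.

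For (4)--(6), apply Artin--Wedderburn to the semi-simple algebra $A\cong\C\coset{\mathcal{R}_W\bk{\mu}}$ and write $A\cong \bigoplus_{i=1}^{k} M_{n_i}\bk{\C}$. For any semi-simple module over such an algebra the number of inequivalent irreducible summands equals $k$ and their multiplicities are $n_1,\ldots,n_k$, giving (6); combined with the classical equality between $k$ and the number of conjugacy classes of $\mathcal{R}_W\bk{\mu}$, this gives (4). Finally, $\C\coset{\mathcal{R}_W\bk{\mu}}$ is commutative if and only if $\mathcal{R}_W\bk{\mu}$ is Abelian, which is equivalent to $n_i=1$ for all $i$, i.e.\ to multiplicity one in the decomposition of $V$; this is (5). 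The main technical hurdle is the splitting in (1) and the non-projectivity of the cocycle in (2); both are classical (Knapp--Stein at Archimedean places, Silberger/Keys--Shahidi at the finite ones) and would be quoted from the references cited just before the theorem statement.
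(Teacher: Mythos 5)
First, note that the paper itself offers no proof of this theorem: it is stated as background and the argument is delegated entirely to the cited references (Knapp--Stein for $\nu\vert\infty$, Silberger and Keys for $\nu\not\vert\infty$), so there is no in-paper proof to compare yours against. Your sketch is the standard reconstruction of those results, and items (3)--(6) do follow from (1)--(2) by Artin--Wedderburn together with the isotypic (double commutant) decomposition of $I_{\bfB,\nu}\bk{\mu,\bar{0}}$ as a bimodule over the group and its commuting algebra; your phrase ``for any semi-simple module over such an algebra'' should be replaced by that bimodule argument, since an arbitrary module over $\bigoplus_i M_{n_i}\bk{\C}$ need not realize every block with the advertised multiplicities.

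The genuine gap is in your item (2). You assert that linear independence of the operators $N_\nu\bk{w,\mu,\bar{0}}$, as $w$ runs over a transversal of $\mathcal{K}_W\bk{\mu}$ in $\Stab_W\bk{\mu}$, is ``immediate from the very definition of $\mathcal{K}_W\bk{\mu}$.'' It is not: the definition only guarantees that each individual operator attached to $w\notin\mathcal{K}_W\bk{\mu}$ fails to be scalar, which does not preclude a nontrivial linear relation among several non-scalar operators. That these operators form a \emph{basis} of $End\bk{I_{\bfB,\nu}\bk{\mu,\bar{0}}}$ --- and not merely a spanning set, which is all that Harish-Chandra's commuting algebra theorem provides --- is precisely the hard content of the Knapp--Stein/Silberger theory and must be quoted from the references, exactly as you quote the splitting in (1) and the triviality of the $2$-cocycle. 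Two smaller inaccuracies: in (1), $\mathcal{K}_W\bk{\mu}$ is indeed the reflection subgroup generated by the $w_\alpha$ whose rank-one operator is scalar, but the complement realizing $\mathcal{R}_W\bk{\mu}$ is the stabilizer of a positive system for that root subsystem and is not in general generated by the remaining reflections, so ``complementary reflection subgroup'' is the wrong description; and with the paper's convention $N_\nu\bk{ww',\mu,\bar{0}}=N_\nu\bk{w',\mu,\bar{0}}\circ N_\nu\bk{w,\mu,\bar{0}}$ the map $w\mapsto N_\nu\bk{w,\mu,\bar{0}}$ is an anti-homomorphism rather than a homomorphism, which is harmless for the algebra isomorphism but should be stated correctly.
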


\section{The Degenerate Eisenstein Series on A Quasi-Split Form of $Spin_8$ Associated to the Heisenberg Parabolic Subgroup}
\label{Sec:Deg_Eis_Series_on_H_E}

This section is a reminder of the groups and series relevant to this paper.
For further details please consult \cite{MR2268487} or \cite[Section 2]{SegalEisen}.

\subsection{Quasi-Split Forms of $Spin_8$}
We recall the bijection
\[
\set{\begin{matrix}
	\text{Quasi-split forms} \\ \text{of $Spin_8$ over $F$}
	\end{matrix}}
\longleftrightarrow
\set{\varphi:\Gal\bk{\overline{F}\rmod F}\to S_3}
\longleftrightarrow
\set{\begin{matrix} \text{Isomorphism classes of} \\ \text{\'etale cubic algebras over $F$} \end{matrix}}.
\]
For any  \'etale cubic algebra $E$ over $F$ we denote by $H_E=Spin_8^E$ the corresponding simply-connected quasi-split form of $H=Spin_8$.
We denote by $\varphi_E:\Gal\bk{\overline{F}\rmod F}\to S_3$ the corresponding action of $\Gal\bk{\overline{F}\rmod F}$ on the Dynkin diagram of type $D_4$.
\begin{figure}[H]
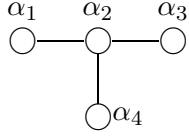

\label{fig:D4}
\begin{center}
\[
\xygraph{
	!{<0cm,0cm>;<0cm,1cm>:<1cm,0cm>::}
	!{(0,-1)}*{\bigcirc}="1"
	!{(0.4,-1)}*{\alpha_1}="label1"
	!{(0,0)}*{\bigcirc}="2"
	!{(0.4,0)}*{\alpha_2}="label2"
	!{(0,1)}*{\bigcirc}="3"
	!{(0.4,1)}*{\alpha_3}="label3"
	!{(-1,0)}*{\bigcirc}="4"
	!{(-1,0.4)}*{\alpha_4}="label4"
	"1"-"2" "2"-"3" "2"-"4"
}
\]
\end{center}
\caption{Dynkin diagram of type $D_4$}
\end{figure}
We note that $\varphi_E$ also fix a twisted form $S_E=\Aut_F\bk{E}$ of $S_3$.

Also recall that an \'etale cubic algebra over $F$ is one of the following:
\begin{enumerate}
\item $F\times F\times F$ (the \emph{split cubic algebra}).
\item $F\times K$, where $K$ is a quadratic field extension of $F$.
\item $E$, where $E$ is a cubic Galois field extension of $F$.
\item $E$, where $E$ is a cubic non-Galois field extension of $F$.
\end{enumerate}
We call the first three \emph{Galois \'etale cubic algebras}.

Given an \'etale cubic algebra $E$ over $F$ we fix a Chevalley-Steinberg system of \'epinglage \cite[Sections 4.1.3-4.1.4]{MR756316}
\[
\set{T_E, B_E, x_\gamma:\Ga\rightarrow\bk{H_E}_\gamma,\gamma\in \Phi_{D_4}} \ ,
\]
where $T_E\subset B_E$ is a maximal torus contained in a Borel subgroup (both defined over $F$) and $\Phi_{D_4}$ are the roots of $H_E\otimes \overline{F}\cong Spin_8\bk{\overline{F}}$.
For any $\gamma$ in the reduced root system of $H_E$ we denote by $F_\gamma$ the field of definition of $\gamma$.
Let $\Phi_E$ denote the root system of $H_E$ with respect to $T_E$.
Also, let $\Phi_E^{+}$ denote the positive roots of $H_E$ with respect to $B_E$ and let $\Delta_E$ denote the set of simple roots.
For a root $\gamma$, we denote by $F_\gamma$ the field of definition of $x_\gamma$.

\vspace{0.5cm}

Let $W=W_{H_E}$ denote the relative Weyl group of $H_E$ with respect to $T_E$ and denote by $w_l$ the longest element of $W$.
For elements of the Weyl group and of $\mathfrak{a}_{T_E,\C}^\ast$, we use the notations introduced at the end of Section 2.1 of \cite{SegalEisen}.
In particular, we denote by $\widetilde{\alpha_i}$ the simple roots of the absolute root system while $\alpha_i$ denote the roots of relative root system.
We denote by $\widetilde{w_i}$ the simple reflection in the Weyl group of the absolute root system associated to $\widetilde{\alpha_i}$ and by $w_i$ the simple reflection in the Weyl group of the relative root system associated to $\alpha_i$.
Similarly, we denote by $\widetilde{\omega_{\alpha_i}}$ the fundamental weights of the absolute root system of $H_E$ and by $\omega_{\alpha_i}$ the fundamental weights of the relative root system.
Also, we write $\widetilde{w_{i_1...i_k}}$ or $\widetilde{w\coset{i_1,...,i_k}}$ for $\widetilde{w_{i_1}}\cdots \widetilde{w_{i_k}}$ and write
$w_{i_1,...,i_k}$ or $w\coset{i_1...i_k}$ for $w_{i_1}\cdots w_{i_k}$.

For $E$ non-split we have:
\begin{itemize}
	\item For a cubic extension $E\rmod F$ it holds that
\begin{align*}
& w_1=\widetilde{w_{134}}=\widetilde{w_{1}}\widetilde{w_{3}}\widetilde{w_{4}},\quad
w_2=\widetilde{w_{2}}, \\
& \alpha_1=\widetilde{\alpha_1}+\widetilde{\alpha_3}+\widetilde{\alpha_4},\quad
\alpha_2=\widetilde{\alpha_2} \\
& \omega_{\alpha_1}=\widetilde{\omega_{\alpha_1}}+\widetilde{\omega_{\alpha_3}}+\widetilde{\omega_{\alpha_4}},\quad
\omega_{\alpha_2}=\widetilde{\omega_{\alpha_2}} .
\end{align*}

\item In the case where $E=F\times K$, we always use the following convention
\begin{align*}
& w_1=\widetilde{w_{1}} ,\quad
w_2=\widetilde{w_{2}}, \quad
w_3=\widetilde{w_{34}}=\widetilde{w_{3}}\widetilde{w_{4}} \\
& \alpha_1=\widetilde{\alpha_1},\quad
\alpha_2=\widetilde{\alpha_2},\quad
\alpha_3=\widetilde{\alpha_3}+\widetilde{\alpha_4} \\
& \omega_{\alpha_1}=\widetilde{\omega_{\alpha_1}},\quad
\omega_{\alpha_2}=\widetilde{\omega_{\alpha_2}},\quad
\omega_{\alpha_3}=\widetilde{\omega_{\alpha_3}}+\widetilde{\omega_{\alpha_4}} .
\end{align*}

Note that here we make a choice of a distinct simple root in the absolute root system, $\widetilde{\alpha_1}$, in the construction of the relative root system.
Any other choice of simple "external" root in $\Delta$ would induce a different action of $S_E$ on the Dynkin diagram; this, in turn, corresponds to a different quasi-split form of $Spin_8$.
However, all such groups are isomorphic (as algebraic groups); this is phenomenon is called \emph{triality}.
\end{itemize}

Denote by $w_0\in W\bk{M_E,T_E}$ the representative of the class of the longest element $w_l$; it holds that $w_l=\widetilde{w_{134}}w_0$.

\vspace{0.5cm}

We denote by $P_E=M_E\cdot U_E$ the Heisenberg parabolic subgroup of $H_E$ which is the maximal parabolic subgroup given by $P_\Psi$, where $\Psi=\set{\alpha_1, \alpha_3, \alpha_4}^{S_E}$.
The Levi subgroup $M_E$ of $P_E$ is isomorphic to 
\[
\bk{\Res_{E\rmod F}GL_2}^0 = \set{g\in \Res_{E\rmod F}GL_2\mvert \operatorname{det}\bk{g}\in \Gm} .
\]
Associated to $M_E$ is a determinant map, ${det}_{M_E}:M_E\to\Gm$.
The unipotent radical $U_E$ of $P_E$ is a 9-dimensional Heisenberg group over $F$.



\subsection{The Degenerate Eisenstein Series Associated to the Heisenberg Parabolic Subgroup}
Given $s\in\C$ and a finite order character $\chi$ of $F^\times\lmod \A^\times$, we form the normalized parabolic induction
\begin{equation}
I\bk{\chi,s} = \Ind_{P_E}^{H_E} \bk{\chi\circ{det}_{M_E}}\otimes \FNorm{{det}_{M_E}}^s .
\end{equation}

Let
\begin{equation}
\begin{split}
& \lambda_s = \bk{-1,s+\frac{3}{2},-1,-1} \\
& \eta_s = \bk{1,s-\frac{3}{2},1,1} \\
& \mu_\chi = \chi\circ {det}_{M_E} \\
& \chi_s = \mu_\chi\otimes \lambda_s ,
\end{split}
\end{equation}
where $\lambda_s, \eta_s\in \mathfrak{a}_{T_E,\C}^\ast$ are written in the coordinates coming from the absolute root system.
As in \Cref{Eq:Induction_from_Parabolic_in_induction_from_Borel}, it holds that
\[
\begin{split}
& \Ind_{B_E}^{H_E}\mu_\chi\otimes\eta_s \twoheadrightarrow I\bk{\chi,s} \\
& I\bk{\chi,s} \hookrightarrow \Ind_{B_E}^{H_E}\mu_\chi\otimes\lambda_s = I_{B_E}\bk{\chi,s} .
\end{split}
\]
In particular, note that, $I\bk{\chi,s}$ is the image (the leading term in the Laurent series) of $N_\nu\bk{\widetilde{w_{134}}, \mu_\chi, \eta_s}$ and the kernel of $N_\nu\bk{\widetilde{w_{134}}, \mu_\chi, \lambda_s}$.

We also note that we also write $N\bk{w,\chi_s}$ for $N\bk{w,\chi,\lambda_s}$.

For a holomorphic section $f_s\in I\bk{\chi,s}$ we define the degenerate Eisenstein series
\begin{equation}
\Eisen_E\bk{\chi,f,s,g} = \Eisen_{P_E}\bk{\mu_\chi,f,\lambda_s,g} = \suml_{\gamma\in P_E\bk{F}\lmod H_E\bk{F}} f_s\bk{\gamma g}.
\end{equation}

To any Galois \'etale cubic algebra $E$, we associate a finite order Hecke character $\chi_E$ as follows:
\begin{itemize}
\item If $E$ is a Galois field extension of $F$, let $\chi_E$ denote the cubic character associated to it by global class field theory.
\item If $E=F\times K$, where $K$ is a field, let $\chi_E=\chi_K$ be the quadratic character assocaited to $K\rmod F$ by global class field theory.
\item IF $E=F\times F\times F$ let $\chi=\Id$.
\end{itemize}

We recall the order of the poles of $\Eisen\bk{\chi,f,s,g}$ in the half-plane $\Real\bk{s}>0$ given in \cite[Theorem 4.1]{SegalEisen}.

\begin{Thm}
The degenerate Eisenstein series $\Eisen\bk{\chi,f,s,g}$ is holomorphic at $s=s_0$ except for the following poles:
\begin{itemize}
\item $s_0=\frac{1}{2}$ and $\chi^2=\Id$, a simple pole.
\item $s_0=\frac{3}{2}$ and $\chi=\chi_E$ a simple pole for $E$ non-split and a double pole when $E=F\times F\times F$.
\item $s_0=\frac{3}{2}$ and $\chi=\Id$ when $E=F\times K$ and $K$ is a field, a simple pole.
\item $s_0=\frac{5}{2}$ and $\chi=\Id$, a simple pole.
\end{itemize}
Further more, the residual representations at these points are square-integrable except for the following cases:
\begin{itemize}
\item $s_0=\frac{1}{2}$, $E=F\times K$ and $\chi\circ\Nm_{K\rmod F}=\Id$ (Here $K$ is either a field or split).
\item $s_0=\frac{3}{2}$, $\chi=\Id$ and $E=F\times K$ is non-split.
\end{itemize}
\end{Thm}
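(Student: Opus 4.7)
The strategy rests on reducing the analytic study of $\Eisen_E\bk{\chi,f,s,g}$ to that of its constant term. By \Cref{NonvanishingifCTisNonvanishing} and \Cref{Cor:Kernel_of_Series_is_kernel_of_CT}, a pole of the series at $s=s_0$ of order $m$ exists if and only if the corresponding pole of the constant term survives cancellation. Applying \Cref{Eq:Constant_term}, one has
\[
\Eisen_E\bk{\chi,f,s,g}_\bfCT = \suml_{w\in W\bk{M_E,H_E}} M\bk{w,\chi_s}f_s\bk{g},
\]
and the first task is to understand, for each Weyl representative $w$, the analytic behaviour of $M\bk{w,\chi_s}f_s$ as a function of $s$.

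The plan is to invoke the Gindikin--Karpelevich decomposition \Cref{Eq: Global Gindikin-Karpelevich}, which writes $M\bk{w,\mu_\chi,\lambda_s}$ as a product of the global factor
\[
J\bk{w,\mu_\chi,\lambda_s}=\prodl_{\alpha>0,\ w^{-1}\alpha<0}\frac{\Lfun_{F_\alpha}\bk{\gen{\lambda_s,\check{\alpha}},\chi\circ\check{\alpha}}}{\Lfun_{F_\alpha}\bk{\gen{\lambda_s,\check{\alpha}}+1,\chi\circ\check{\alpha}}}
\]
times entire local normalized operators (since the roots meeting $w^{-1}\alpha<0$ for the simple reflections give rise to the $N_\nu$, which are holomorphic in $\Real\bk{\gen{\lambda,\check{\alpha}}}>-1$). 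Thus every potential pole in $\Real\bk{s}>0$ must come from a numerator $\Lfun_{F_\alpha}\bk{\gen{\lambda_s,\check{\alpha}},\chi\circ\check{\alpha}}$ acquiring a pole, balanced against cancellations from the denominator. Since $\lambda_s=\bk{-1,s+\tfrac{3}{2},-1,-1}$ in the absolute coordinates, I would enumerate the roots $\alpha\in\Phi_{D_4}^+\cap w^{-1}\Phi_{D_4}^-$ for each of the finitely many $w\in W\bk{M_E,H_E}$, compute $\gen{\lambda_s,\check{\alpha}}$, and record the characters $\chi\circ\check{\alpha}$. Using the standard fact that $\Lfun_L\bk{s,\psi}$ is entire for $\psi$ non-trivial and has a unique simple pole at $s=1$ for $\psi=\Id$, one obtains a finite list of candidate pole locations: essentially $s_0\in\set{\tfrac{1}{2},\tfrac{3}{2},\tfrac{5}{2}}$, together with character constraints dictated by whether the inducing character $\chi\circ\check{\alpha}$ on the relevant $F_\alpha^\times$ becomes trivial. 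This produces the four cases listed in the theorem, the doubling at $s_0=\tfrac{3}{2}$ in the split case arising because two coroots contribute $\zfun_F$-poles simultaneously.

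To establish that the pole actually occurs (as opposed to cancelling), one organises $\Sigma^{P_E}_{\bk{\chi,s_0,m}}$ into equivalence classes under $\sim_{\bk{\chi,s_0\!}}$ and checks, for each class, whether the truncated sum $\lim_{s\to s_0}\bk{s-s_0}^m\sum_{w\in[w']}M\bk{w,\chi_s}$ vanishes identically. Here one chooses a convenient test section $f_s$ (typically a spherical section at unramified places times a well-chosen local vector at finitely many ramified places) and computes the limit; the non-vanishing on at least one class yields the claimed pole. The main combinatorial obstacle is the bookkeeping of all representatives in $W\bk{M_E,H_E}$ and their root inversions, complicated by triality acting on $\Phi_{D_4}$ through $\varphi_E$, so the enumeration must be done case-by-case for each type of $E$.

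Finally, for square-integrability I would apply Langlands' square-integrability criterion to the residual representation: it is square-integrable iff every exponent appearing in its constant term along $B_E$ lies in the open negative obtuse Weyl chamber, i.e.\ satisfies $\Real\gen{w^{-1}\cdot\chi_{s_0},\check{\alpha}}<0$ for all $\alpha\in\Delta_E$, for every $w$ with $M\bk{w,\chi_s}$ contributing to the leading term. After computing these exponents in each pole case, one verifies they are strictly negative except in the two exceptional cases: at $s_0=\tfrac{1}{2}$ with $E=F\times K$ and $\chi\circ\Nm_{K/F}=\Id$, and at $s_0=\tfrac{3}{2}$ with $\chi=\Id$ and $E=F\times K$ non-split. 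In these cases, some exponent becomes non-negative, so Langlands' criterion fails and the residual representation is only residual but not square-integrable. The hardest part of the argument is the careful tracking of equivalence-class cancellations at $s_0=\tfrac{3}{2}$ for $E$ split, where several coroots simultaneously hit $\gen{\lambda_s,\check{\alpha}}=1$ and the interaction of $\zfun_F$-poles with the triality action of $S_E$ must be handled explicitly.
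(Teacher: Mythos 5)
This theorem is not proved in the present paper at all: it is quoted verbatim from \cite[Theorem 4.1]{SegalEisen}, so the ``paper's own proof'' is a citation. Your outline --- reduction to the constant term via \Cref{NonvanishingifCTisNonvanishing}, the Gindikin--Karpelevich factorization of $M\bk{w,\chi_s}$ into the global $J$-factor times entire normalized local operators, case-by-case enumeration of $W\bk{M_E,H_E}$ with cancellation analysis within the $\sim_{\bk{\chi,s_0}}$-classes, and Langlands' exponent criterion for square-integrability --- is exactly the strategy of that reference and of the machinery assembled in \Cref{Sec:Background}, so it is essentially the same approach.
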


Let $E$ be an \'etale cubic algebra over $F$, $\chi:F^\times\lmod\A^\times\to\C^\times$ a quadratic character and $s_0\in\set{\frac{1}{2},\frac{3}{2},\frac{5}{2}}$ such that $\Eisen_E\bk{\chi,f_s,s,g}$ admits a pole of order $n$ at $s_0$.
We define the residue of $\Eisen_E\bk{\chi,f_s,s,g}$ at $s_0$ to be
\[
Res\bk{s_0,\chi,E} = Span_\C \set{\lim\limits_{s\to s_0} \bk{s-s_0}^n \Eisen_E\bk{\chi,f_s,s,\cdot}\mvert f_s\in I_{P_E}\bk{\chi,s}} .
\]
In this paper we study $Res\bk{s_0,\chi,E}$ for the various triples $\bk{s_0,\chi,E}$.
The results are summarized in \Cref{Thm:Main_Sq_Int} and \Cref{Thm:Main_non_Sq_Int}.


\section{Local Degenerate Principal Series}
\label{Sec_Local_Representations}

Write $I\bk{\chi,s} = \otimes' I_\nu\bk{\chi_\nu,s}$, where
\[
I_\nu\bk{\chi_\nu,s} = \Ind_{P_E\bk{F_\nu}}^{H_E\bk{F_\nu}} \bk{\chi\circ{det}_{M_E}}\otimes \FNorm{{det}_{M_E}}_{F_\nu}^s .
\]
In this section, we discuss the structure of $I_\nu\bk{\chi_\nu,s}$ in cases where $\Eisen\bk{\chi,f,s,g}$ admits a pole. We also discuss the behaviour of these representations under various intertwining operators of importance to the calculations performed in \Cref{Sec:SquareIntegrableResidues} and \Cref{Sec:NonSquareIntegrableResidues}.
We record the structure of those local representations in the following theorem.
\begin{Thm}
	\label{Thm:local_results}
	\begin{enumerate}
		\item For any place $\nu$, the representation $I_\nu\bk{\Id_\nu,\frac{5}{2}}$ admits a unique irreducible quotient which is trivial.
		
		\item For any place $\nu$ such that $E_\nu$ is a Galois \'etale cubic algebra, the representation $I_\nu\bk{\chi_{E,\nu},\frac{3}{2}}$ admits a unique irreducible quotient which is the minimal representation of $H_E\bk{F_\nu}$.
		
		\item For any place $\nu$ such that $E_\nu=F_\nu\times K_\nu$ and $K_\nu$ is a field, the representation $I_\nu\bk{\Id_\nu,\frac{3}{2}}$ admits a unique irreducible quotient which is spherical.
		
		\item For any place $\nu$ such that $E_\nu$ is not a field, the representation $I_\nu\bk{\Id_\nu,\frac{1}{2}}$ admits a unique irreducible quotient which is spherical.

		\item For any place $\nu$ such that $E_\nu$ is a field, $I_\nu\bk{\Id_\nu,\frac{1}{2}}$ admits a maximal semi-simple quotient of the form $\pi_1\oplus\pi_{-2}$, where $\pi_1$ is spherical and both irreducible quotients are eigenspaces of certain intertwining operators (of eigenvalues $1$ and $-2$).
		
		\item For any place $\nu$ such that $E_\nu$ is not a field and $\chi_\nu\circ\Nm_{E_\nu\rmod F_\nu} = \Id$, $I_\nu\bk{\chi_\nu,\frac{1}{2}}$ admits a unique irreducible quotient which is spherical.
		
		\item For any place $\nu$ such that $\chi_\nu$ is quadratic and $\chi_\nu\circ\Nm_{E_\nu\rmod F_\nu}\neq \Id$, $I_\nu\bk{\chi_\nu,\frac{1}{2}}$ admits a maximal semi-simple quotient of length
		\begin{itemize}
			\item 1 if $E_\nu$ is a field,
			\item 2 if $E_\nu=F_\nu\times K_\nu$ and $K_\nu$ is a field or
			\item 4 if $E_\nu=F_\nu\times F_\nu\times F_\nu$.
		\end{itemize}
		
	\end{enumerate}
\end{Thm}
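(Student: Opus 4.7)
The approach is uniform across all seven parts: realize each $I_\nu\bk{\chi_\nu,s}$ as a subrepresentation of the Borel-induced $I_{B_E,\nu}\bk{\mu_{\chi_\nu},\lambda_s}$ via the inclusion recorded in \Cref{Sec:Deg_Eis_Series_on_H_E}, then analyze the maximal semi-simple quotient by combining the Langlands quotient theorem with Harish-Chandra's commuting algebra theorem and the $R$-group formalism from \Cref{Subsec:R_groups}.

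For the statements in parts (1)--(4) and (6), which assert a unique irreducible quotient, I would first verify that $\lambda_{s_0}$ lies strictly in the positive chamber with respect to $\Delta_E\setminus\Delta_{M_E}$, so that $I_\nu\bk{\chi_\nu,s_0}$ is a standard module and Langlands' theorem applies; the unique Langlands quotient is then the image of $M_\nu\bk{w_0,\chi_s}$, where $w_0$ is the Weyl representative fixed in \Cref{Sec:Deg_Eis_Series_on_H_E}. Identification of this image proceeds case by case. At $s_0=\frac{5}{2}$ with $\chi=\Id$, a direct Gindikin-Karpelevich computation together with \Cref{Eq:Normalized_intertwining_operator_acting_on_spherical_vector} shows that the image carries an $H_E\bk{F_\nu}$-invariant vector and therefore coincides with the trivial representation. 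At $s_0=\frac{3}{2}$ with $\chi=\chi_{E,\nu}$, the image has the infinitesimal character and Jacquet filtration of the minimal representation of $H_E\bk{F_\nu}$; matching these invariants against the construction in \cite{MR2268487} yields the identification. For the spherical cases (3), (4), and (6), uniqueness of the Langlands quotient combined with the presence of a spherical vector in $I_\nu\bk{\chi_\nu,s_0}$ forces this quotient to be spherical.

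Parts (5) and (7) are the substantive new content. At $s_0=\frac{1}{2}$ the standard-module positivity fails, and the Langlands quotient is replaced by a maximal semi-simple quotient whose structure is governed by the $R$-group $\mathcal{R}_W\bk{\mu_{\chi_\nu}\otimes\lambda_{1/2}}$. The plan is to factor $M_\nu\bk{w_0,\chi_s}$ into rank-one operators via \Cref{Eq:Decomposition_of_intertwining_operators_to_simple_reflections}, renormalize each factor as in \Cref{eq:Normalized_intertwining_operators}, and locate those simple reflections at which the Knapp-Stein $L$-factor develops a pole producing an extra irreducible summand in the quotient. The count of summands follows from a direct $R$-group calculation: $\Stab_W\bk{\mu_{\chi_\nu}\otimes\lambda_{1/2}}$ is read off from the Galois action $\varphi_E$ on the $D_4$ diagram, and $\mathcal{K}_W$ is cut out by Harish-Chandra's theorem. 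The resulting $\mathcal{R}_W$ is a $2$-group of order $1$, $2$, or $4$ depending on the splitting type of $E_\nu$, yielding the lengths $1$, $2$, $4$ claimed in (7). For (5), $\mathcal{R}_W$ has order $2$, and the eigenvalues $+1$ and $-2$ of its non-trivial generator on the quotient are read off from the residue of the normalizing $L$-factor in \Cref{eq:Normalized_intertwining_operators}.

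The main obstacle is the Archimedean case of (5) and (7). There, the rank-one operators are governed by $\Gamma$-factors whose simultaneous poles and zeros obstruct a clean hand computation of the $R$-group; for this reason the paper defers these verifications to \Cref{Appendix:Archimedean}, where the ATLAS software is used to confirm composition series and eigenvalues of normalized intertwining operators. A secondary difficulty is the bookkeeping of the fields of definition $F_{\alpha,\nu}$ attached to each root through the Chevalley-Steinberg system; these control the normalizing $L$-factors and must be tracked consistently across places in order to arrive at the correct order of $\mathcal{R}_W$ at the places where $E_\nu$ is ramified.
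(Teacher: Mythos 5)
Your plan for parts (6) and (7) is essentially the paper's: there one conjugates $\eta_{\frac12}$ to $\lambda_{\bk{0,1,0,0}}$, decomposes the \emph{unitary} induction $\Ind_{B_E\cap M_E}^{M_E}\widetilde{\chi}$ on the Levi via its $R$-group (trivial, order $2$, or order $4$ according to the splitting of $E_\nu$), and applies the Langlands quotient theorem to each tempered summand; your counts $1,2,4$ come out correctly. But for parts (4) and (5) your argument has a genuine gap. At $s_0=\frac12$ with $\chi_\nu=\Id_\nu$ the inducing character $\mu_{\chi_\nu}\otimes\lambda_{1/2}$ is \emph{not} unitary, so Harish-Chandra's commuting algebra theorem and the group $\mathcal{R}_W$ of \Cref{Subsec:R_groups} simply do not apply to it; the telltale sign is the eigenvalue $-2$, which cannot arise from an $R$-group operator (those are involutions on the unitary axis, normalized to have eigenvalues $\pm1$). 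In the paper the two candidate quotients $\pi_1$ and $\pi_{-2}$ arise instead as Langlands quotients of $\Ind_{P_{\set{\alpha_2}}}^{H_E}\bk{\Id\otimes\lambda_0}$ and $\Ind_{P_{\set{\alpha_2}}}^{H_E}\bk{St\otimes\lambda_0}$, and the $-2$ is the eigenvalue of a specific normalized operator recalled from \cite{SegalSingularities}, not a residue of a normalizing $L$-factor.

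More importantly, no stabilizer or $R$-group computation can produce the dichotomy between (4) and (5), because both $\pi_{1,\nu}$ and $\pi_{-2,\nu}$ are always quotients of the full Borel induction $I_{B_E,\nu}\bk{\Id,\eta_{1/2}}$; the question is whether $\pi_{-2,\nu}$ survives as a quotient of the proper quotient $I_\nu\bk{\Id_\nu,\frac12}$. This is decided in the paper by a separate argument on the dual side: when $E_\nu$ is a field, a Jacquet-module multiplicity count along $T_E$ shows $I_\nu\bk{\Id_\nu,-\frac12}$ has a length-two socle; when $E_\nu=F_\nu\times K_\nu$, an explicit chain of embeddings $I_\nu\bk{\Id_\nu,-\frac12}\hookrightarrow\Ind_{T_E}^{H_E}\lambda_{\bk{0,-1,0,0}}$ forces a unique irreducible subrepresentation (and the Archimedean cases go to \Cref{Appendix:Archimedean}, where for $E_\nu=\R\times\C$ one even finds $\pi_{-2,\nu}$ occurring as a non-split \emph{sub}representation of $I_\nu\bk{\Id_\nu,\frac12}$ rather than a quotient). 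Your proposal supplies no substitute for this step. Two smaller points: in part (1) a spherical vector in the image of the long intertwining operator does not by itself identify the quotient as trivial (the paper dualizes instead), and in part (2) the minimal representation is not pinned down by its infinitesimal character and Jacquet filtration alone; the paper simply invokes \cite{MR1918673}.
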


The proof of this theorem will occupy the rest of this section, while some calculations performed at Archimedean places are performed in \Cref{Appendix:Archimedean}.

\begin{Remark}
	In all cases, we shall identify the irreducible quotients of the various $I_\nu\bk{\chi_\nu,s_0}$ as eigenspaces of certain normalized intertwining operators.
	We will denote the irreducible quotients of $I_\nu\bk{\chi_\nu,s_0}$ by $\pi_{\epsilon,\nu}^{\bk{\chi_\nu,E_\nu,s_0}}$, where $\epsilon$ is this eigenvalue, or tuple of eigenvalues.
	Usually, when there is no source of confusion, we will denote $\pi_{\epsilon,\nu}^{\bk{\chi_\nu,E_\nu,s_0}}$ by $\pi_{\epsilon,\nu}$.
\end{Remark}


\begin{proof}
We consider the various possible $s$, $\chi$ and $E$.
\subsection{$s=\frac{5}{2}$ and $\chi_\nu=\Id_\nu$}
In this case, $I_\nu\bk{\Id,\frac{5}{2}}$ is a standard module with Langlands operator $N_\nu\bk{w_0,\Id,\lambda_{\frac{5}{2}}}$. Since $I_\nu\bk{\Id,-\frac{5}{2}}\cong {I_\nu\bk{\Id,\frac{5}{2}}}^\ast$ admits $\Id_{H_E}$ as a subrepresentation, the unique irreducible quotient of $I_\nu\bk{\Id,\frac{5}{2}}$ is isomorphic to the $\Id_{H_E}$.

\subsection{$s=\frac{3}{2}$ and $\chi_\nu=\chi_{E,\nu}$}
We recall, from \cite[Proposition 4.3]{MR1918673}, that, for any $\nu\in\Places$, the unique irreducible quotient of $I_\nu\bk{\chi_{E,\nu},\frac{3}{2}}$ is the minimal representation $\Pi_{E,\nu}$ of $H_E\bk{F_\nu}$.

\subsection{$s=\frac{3}{2}$ and $\chi_\nu=\Id_\nu$}
We assume that $E_\nu=F_\nu\times K_\nu$, where $K_\nu$ is a quadratic field extension of $F_\nu$.
It follows from \cite[Theorem 3.3(1)]{RallisSchiffmannPaper} that $I_\nu\bk{\Id,\frac{3}{2}}$ admits a unique irreducible quotient (which is spherical).

\subsection{$s=\frac{1}{2}$ and $\chi_\nu=\Id_\nu$}
Fix a place $\nu\in\Places$.
We start by recalling from \cite{SegalSingularities} that
\[
\Ind_{B_E{\bk{F_\nu}}}^{H_E{\bk{F_\nu}}} \lambda_{\bk{-1,1,-1,-1}} =
\Pi_{-2,\nu}\oplus \Pi_{1,\nu},
\]
where:
\begin{itemize}
	\item $\Pi_{1,\nu}=\Ind_{P_{\set{\alpha_2}}\bk{F_\nu}}^{H_E\bk{F_\nu}} \lambda_0$, where $\lambda_0\in \mathfrak{a}_{M_{\set{\alpha_2}},\C}^\ast$ satisfy that,
	$\iota_M\bk{\lambda_0}=\lambda_{\bk{-\frac{1}{2},0,-\frac{1}{2}-\frac{1}{2}}}$, in terms of \Cref{Eq:Inculsions_of_character_groups}.
	\item $\Pi_{-2,\nu}=\Ind_{P_{\set{\alpha_2}}\bk{F_\nu}}^{H_E\bk{F_\nu}} St_{M_{\set{\alpha_2}}}\otimes\lambda_0$, where $St_{M_{\set{\alpha_2}}}$ is the Steinberg representation of $M_{\set{\alpha_2}}$.
	\item Each of the representations $\Pi_\epsilon$ is the $\epsilon$-eigenspace of $\lim_{s\to\frac{1}{2}} N_\nu\bk{w_{21342},\Id,\lambda_{\bk{-1,1,-1,-1}}}$ and each of them admits a unique irreducible subrepresentation.
\end{itemize}

It follows that
\[
\Ind_{B_E{\bk{F_\nu}}}^{H_E{\bk{F_\nu}}} \lambda_{\bk{1,-1,1,1}}
= \coset{\Ind_{B_E{\bk{F_\nu}}}^{H_E{\bk{F_\nu}}} \lambda_{\bk{-1,1,-1,-1}}}^\ast = \Pi_{-2,\nu}^\ast\oplus \Pi_{1,\nu}^\ast
\]
and that both $\Pi_{1,\nu}^\ast$ and $\Pi_{-2,\nu}^\ast$ admit unqiue irreducible quotients, denoted by $\pi_1$ and $\pi_{-2}$ respectively.
Note that $\pi_1$ is a spherical representation.

We also note that:
\begin{itemize}
	\item $\lambda_{\bk{-1,1,-1,-1}} = \lambda_{-\frac{1}{2}}=\widetilde{w_{2134}}^{-1}\cdot\lambda_{\frac{1}{2}} = \widetilde{w_{213421342}}^{-1}\cdot\lambda_{\frac{1}{2}}$ .
	\item $\lambda_{\bk{1,-1,1,1}}=\eta_{\frac{1}{2}}$.
\end{itemize}

Hence, the maximal semi-simple quotient of $I_\nu\bk{\Id,\frac{1}{2}}$ is a quotient of$I_{B_E,\nu}\bk{\Id,\eta_{\frac{1}{2}}}$.
Since $I_\nu\bk{\Id,\frac{1}{2}}$ is spherical, its maximal semi-simple quotient is either $\pi_{1,\nu}$ or $\pi_{1,\nu}\oplus\pi_{-2,\nu}$.

From the calculations in \cite{SegalSingularities}, it also follows that the maximal semi-simple quotient of $I_\nu\bk{\Id,\frac{1}{2}}$ is given by $N_\nu\bk{\widetilde{w_{2134}},\Id,\lambda_{\frac{1}{2}}}\bk{I_\nu\bk{\Id,\frac{1}{2}}}$.

Finally, it holds that:
\begin{Lem}
	The maximal semi-simple quotient of $I_\nu\bk{\Id,\frac{1}{2}}$ is given as follows:
	\begin{itemize}
		\item If $E_\nu$ is not a field then $\pi_{1,\nu}$ is the unique irreducible quotient of $I_\nu\bk{\Id,\frac{1}{2}}$.
		\item If $E_\nu$ is a field then the maximal semi-simple quotient of $I_\nu\bk{\Id,\frac{1}{2}}$ is $\pi_{1,\nu}\oplus \pi_{-2,\nu}$.
	\end{itemize}
\end{Lem}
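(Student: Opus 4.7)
The plan is to leverage the identification, recorded in the preceding paragraph via \cite{SegalSingularities}, that the maximal semi-simple quotient of $I_\nu\bk{\Id,\frac{1}{2}}$ equals the image $N_\nu\bk{\widetilde{w_{2134}},\Id,\lambda_{\frac{1}{2}}}\bk{I_\nu\bk{\Id,\frac{1}{2}}}$, which is a semi-simple submodule of $\Pi_{1,\nu}\oplus \Pi_{-2,\nu}$. Tracking this through the duality that relates each $\Pi_{\epsilon,\nu}$ to $\Pi_{\epsilon,\nu}^\ast$, whose unique irreducible quotient is $\pi_{\epsilon,\nu}$, the problem reduces to deciding, case by case on $E_\nu$, whether this image is the spherical line $\pi_{1,\nu}$ alone or the full $\pi_{1,\nu}\oplus \pi_{-2,\nu}$.

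First I would verify that $\pi_{1,\nu}$ always appears as a quotient. Since $I_\nu\bk{\Id,\frac{1}{2}}$ is spherical and the normalized intertwining operator preserves normalized spherical vectors by \Cref{Eq:Normalized_intertwining_operator_acting_on_spherical_vector}, the image contains a nonzero spherical vector. As $\pi_{1,\nu}$ is the unique spherical constituent among the two candidates, it must appear as a quotient of $I_\nu\bk{\Id,\frac{1}{2}}$ regardless of $E_\nu$.

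To determine when $\pi_{-2,\nu}$ also appears, I would decompose $N_\nu\bk{\widetilde{w_{2134}},\Id,\lambda_{\frac{1}{2}}}$ into rank-one normalized operators along a reduced expression of $\widetilde{w_{2134}}$ in the \emph{relative} root system of $H_E$, using \Cref{Eq:Decomposition_of_intertwining_operators_to_simple_reflections}, and reduce each factor to an $SL_2$-calculation via \Cref{Lemma:Intertwining_operator_of_simple_reflections}. When $E_\nu$ is not a field, this reduced expression has length $3$ or $4$ in the relative Weyl group (versus length $2$ in the field case), and one shows by a vanishing analysis of one of the intermediate rank-one factors at $s=\frac{1}{2}$ that the $\Pi_{-2,\nu}$-component of the image is killed. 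An equivalent route is to use Frobenius reciprocity together with induction in stages on the product-type Levi factors of $M_E$ to conclude $\Hom_{H_E\bk{F_\nu}}\bk{I_\nu\bk{\Id,\frac{1}{2}},\pi_{-2,\nu}}=0$.

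The main obstacle is the opposite direction when $E_\nu$ is a field: one must exhibit a section of $I_\nu\bk{\Id,\frac{1}{2}}$ whose image under $N_\nu\bk{\widetilde{w_{2134}},\Id,\lambda_{\frac{1}{2}}}$ has nonzero projection on $\Pi_{-2,\nu}$. Here $\widetilde{w_{2134}}=w_2 w_1$ in the relative Weyl group, so the operator factorizes into only two rank-one pieces; at nonarchimedean places an explicit test function supported on a suitable Bruhat cell should yield the desired nonvanishing, while at archimedean places the computation is more delicate and is deferred to \Cref{Appendix:Archimedean}, where the \textsc{atlas} software is employed.
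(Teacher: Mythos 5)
Your reduction to the image of $N_\nu\bk{\widetilde{w_{2134}},\Id,\lambda_{\frac{1}{2}}}$ and your observation that $\pi_{1,\nu}$ always occurs (normalized operators preserve normalized spherical vectors, and $\pi_{1,\nu}$ is the only spherical candidate) are both correct and consistent with what precedes the lemma in the paper. But the two steps that actually decide the lemma are asserted, not proved, and the mechanism you propose for the first of them is doubtful. For $E_\nu$ not a field you claim that ``a vanishing analysis of one of the intermediate rank-one factors at $s=\frac{1}{2}$'' kills the $\Pi_{-2,\nu}$-component. The rank-one normalized operators occurring in a reduced decomposition of $N_\nu\bk{\widetilde{w_{2134}},\Id,\lambda_{\frac{1}{2}}}$ are holomorphic at the relevant points and are generically isomorphisms; no single factor is identically zero on the $\Pi_{-2,\nu}$-part, so there is no ``vanishing'' to analyze, and you would instead have to track the kernel of a composite through several intermediate principal series --- which is exactly the work you have skipped. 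The paper's actual mechanism (dualized: it shows $I_\nu\bk{\Id,-\frac{1}{2}}$ has a \emph{unique} irreducible subrepresentation) is structural: writing $I_\nu\bk{\Id,-\frac{1}{2}}\hookrightarrow \Ind_{P_{\set{\alpha_1}}}^{H_E}\Omega$ and using that $M_{\set{\alpha_1,\alpha_2}}\cong GL_3\times\Res_{K\rmod F}\bk{GL_1}$ forces $\Ind_{M_{\set{\alpha_1}}}^{M_{\set{\alpha_1,\alpha_2}}}\Omega$ to be irreducible, one embeds $I_\nu\bk{\Id,-\frac{1}{2}}$ into $\Ind_{T_E}^{H_E}\lambda_{\bk{0,-1,0,0}}$, whose unique irreducible subrepresentation is $\pi_{1,\nu}$. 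Your alternative one-line appeal to ``Frobenius reciprocity together with induction in stages'' gestures at this but does not identify the irreducibility statement that makes it work.

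For $E_\nu$ a field, ``an explicit test function supported on a suitable Bruhat cell should yield the desired nonvanishing'' is not an argument; nothing in your proposal produces such a section or explains why its image has nonzero projection to $\Pi_{-2,\nu}$. The paper settles this direction by a multiplicity count in Jacquet modules: the exponent $\bk{-1,1,-1,-1}$ occurs with multiplicity $2$ in $\mathcal{J}_{T_E}^{H_E}\bk{\Ind_{B_E}^{H_E}\chi_{-\frac{1}{2}}}$ and \emph{also} with multiplicity $2$ in $\mathcal{J}_{T_E}^{H_E}\bk{I_\nu\bk{\Id,-\frac{1}{2}}}$, while it occurs with multiplicity $1$ in each of the two irreducible constituents in question; since both are constituents of the full principal series, both of their (unique irreducible) subrepresentations must already lie inside $I_\nu\bk{\Id,-\frac{1}{2}}$. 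Some such quantitative input is unavoidable here, and your proposal contains none. (Your deferral of the archimedean places to \Cref{Appendix:Archimedean} does match the paper.)
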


\begin{proof}
	For $\nu\vert\infty$ it follows from \Cref{Appendix:Archimedean}.
	For $\nu\not\vert\infty$ it follows from \cite{Gan-Savin-D4}; however, we supply a different proof.
	
	We show that $I_\nu\bk{\Id,-\frac{1}{2}}$ has a semi-simple subrepresentation of length $2$ if $E_\nu$ is a field and a unique irreducible subrepresentation if $E_\nu=F_\nu\times K_\nu$.
	
	\begin{itemize}
		\item Assume that $E_\nu$ is a field.
		In this case, a straight forward computation of Jacquet modules along $B_E$ yields that the multiplicity of $\bk{-1,1,-1,-1}$:
		\begin{itemize}
			\item The multiplicity of $\bk{-1,1,-1,-1}$ in $\mathcal{J}_{T_E}^{H_E} \bk{\Ind_{B_E}^{H_E}\chi_{-\frac{1}{2}}}$ is $2$.
			
			\item The multiplicity of $\bk{-1,1,-1,-1}$ in $\mathcal{J}_{T_E}^{H_E} \bk{I_\nu\bk{\Id,-\frac{1}{2}}}$ is $2$.
			
			\item The multiplicity of $\bk{-1,1,-1,-1}$ in $\mathcal{J}_{T_E}^{H_E} \bk{\pi_{1,\nu}}$ and $\mathcal{J}_{T_E}^{H_E} \bk{\pi_{2,\nu}}$ is $1$.
		\end{itemize}
		Since both $\pi_{1,\nu}$ and $\pi_{2,\nu}$ are constituents of $\Ind_{B_E}^{H_E}\chi_{-\frac{1}{2}}$, the claim follows.
		
		\item Assume that $E_\nu=F_\nu\times K_\nu$.
		In particular, we assume that $\alpha_1$ is defined over $F_\nu$.
		Let $\Omega$ denote the $1$-dimensional representation of $M_{\set{\alpha_1}}$ given by the Jacquet functor $\mathcal{J}_{M_{\set{\alpha_1}}}^{H_E} \bk{\FNorm{{det}_{M_E}}^{-\frac{1}{2}}}$.
		Note that, by taking Jacquet module in stages,  $\mathcal{J}_{T_E}^{M_{\set{\alpha_1}}} \bk{\Omega} = \bk{-1,1,-1,-1}$.
		
		The Levi subgroup $M_{\alpha_1,\alpha_2}$ is isomorphic to $GL_3\times \Res_{K\rmod F}\bk{GL_1}$ and hence the (normalized) induction $\Ind_{M_{\set{\alpha_1}}}^{M_{\set{\alpha_1,\alpha_2}}}\Omega$ is irreducible.
		It follows that
		\begin{align*}
		I_\nu\bk{\Id,-\frac{1}{2}}
		& \hookrightarrow \Ind_{P_E}^{G_E} \bk{\Ind_{P_{\set{\alpha_1}}}^{P_E}\Omega} \\
		& \cong \Ind_{P_{\set{\alpha_1}}}^{H_E}\Omega \\
		& \cong \Ind_{P_{\set{\alpha_1,\alpha_2}}}^{H_E} \bk{\Ind_{M_{\set{\alpha_1}}}^{M_{\set{\alpha_1,\alpha_2}}}\Omega} \\
		& \hookrightarrow \Ind_{P_{\set{\alpha_1,\alpha_2}}}^{H_E} \bk{\Ind_{T_E}^{M_{\set{\alpha_1,\alpha_2}}} \lambda_{\bk{0,-1,0,0}}} \\
		& \cong \Ind_{T_E}^{H_E} \lambda_{\bk{0,-1,0,0}}.
		\end{align*}
		Since $\Ind_{T_E}^{H_E} \lambda_{\bk{0,-1,0,0}}$ admits $\pi_{1,\nu}$ as its unique irreducible subrepresentation, the claim follows.

	\end{itemize}
	
\end{proof}

\subsection{$s=\frac{1}{2}$ and $\chi_\nu^2=\Id_\nu\neq\chi_\nu$}
We first note that
\[
I_{B_E,\nu}\bk{\mu_{\chi_\nu},\eta_{\frac{1}{2}}} \cong 
I_{B_E,\nu}\bk{\chi_\nu\circ\bk{\widetilde{\omega_{\alpha_1}}+\widetilde{\omega_{\alpha_2}}+\widetilde{\omega_{\alpha_3}}+\widetilde{\omega_{\alpha_4}}},\lambda_{\bk{0,1,0,0}}},
\]
where the isomorphism is given by $N_\nu\bk{w_2,\mu_{\chi_\nu},\eta_{\frac{1}{2}}}$.

By induction in stages, we have
\begin{align*}
& I_{B_E,\nu}\bk{\chi_\nu\circ\bk{\widetilde{\omega_{\alpha_1}}+\widetilde{\omega_{\alpha_2}}+\widetilde{\omega_{\alpha_3}}+\widetilde{\omega_{\alpha_4}}},\lambda_{\bk{0,1,0,0}}} = \\
& \Ind_{P_E\bk{F_\nu}}^{H_E\bk{F_\nu}} \coset{\bk{\Ind_{B_E\bk{F_\nu}\cap M_E\bk{F_\nu}}^{M_E\bk{F_\nu}} \chi_\nu\circ\bk{\widetilde{\omega_{\alpha_1}}+\widetilde{\omega_{\alpha_2}}+\widetilde{\omega_{\alpha_3}}+\widetilde{\omega_{\alpha_4}}}}  \otimes \omega_{\alpha_2} } .
\end{align*}

We first consider the inner induction.
Write $\widetilde{\chi}=\chi_\nu\circ\bk{\widetilde{\omega_{\alpha_1}}+\widetilde{\omega_{\alpha_2}}+\widetilde{\omega_{\alpha_3}}+\widetilde{\omega_{\alpha_4}}}$.

As $\widetilde{\chi}$ is unitary, $\Ind_{B_E\bk{F_\nu}\cap M_E\bk{F_\nu}}^{M_E\bk{F_\nu}}\widetilde{\chi}$ is semi-simple and its structure could be understood via its $R$-group (see \Cref{Subsec:R_groups}).

We first note that $Stab_W\bk{\widetilde{\chi}}\subset W_{M_E}$.
Also, since any element in $W_{M_E}$ is an involution, for any $w\in Stab_W\bk{\widetilde{\chi}}$, the space
$\Ind_{B_E\bk{F_\nu}\cap M_E\bk{F_\nu}}^{M_E\bk{F_\nu}}\bk{\widetilde{\chi}}$ decompose into a direct sum of eigenspaces of $N\bk{w,\widetilde{\chi},\bar{0}}$ with eigenvalues $1$ and $-1$.
While these eigenspaces are subrepresentations, they are not necessarily irreducible.

We calculate $Stab_W\bk{\widetilde{\chi}}\subset W_{M_E}$ and $\mathcal{R}_{W_{M_E}}\bk{\widetilde{\chi}}$ for the various $E_\nu$ and $\chi_\nu$.
\begin{itemize}
	\item If $E_\nu$ is a field, then $\Stab_{W_{M_E}}\bk{\widetilde{\chi}}=\set{1}$.
	It follows that $\mathcal{R}_{W_{M_E}}\bk{\widetilde{\chi}}=\set{1}$.
	In particular, $\Ind_{B_E\bk{F_\nu}\cap M_E\bk{F_\nu}}^{M_E\bk{F_\nu}}\bk{\widetilde{\chi}}$ is irreducible.
	We also note that $N_\nu\bk{w_1,\widetilde{\chi},\bar{0}}$ acts as an isomorphism, but is not an endomorphism.
	
	\item If $E_\nu=F_\nu\times K_\nu$, where $K_\nu$ is a field, then $\Stab_{W_{M_E}}\bk{\widetilde{\chi}}=\set{1,w_3}$.
	
	\begin{itemize}
		\renewcommand{\labelitemii}{$\ast$}
		
		\item If $\chi_\nu\circ\Nm_{E_\nu\rmod F_\nu}=\Id$, then $N_\nu\bk{w_{3},\widetilde{\chi},\bar{0}}=\Id$.
		It follows that $\mathcal{R}_{W_{M_E}}\bk{\widetilde{\chi}}=\set{1}$.
	
		\item If $\chi_\nu\circ\Nm_{E_\nu\rmod F_\nu}\neq\Id$, then $N_\nu\bk{w_{3},\widetilde{\chi},\bar{0}}$ is not a constant multiple.
		In fact, in this case, we have a decomposition $\Ind_{B_E\bk{F_\nu}\cap M_E\bk{F_\nu}}^{M_E\bk{F_\nu}}\bk{\widetilde{\chi}}=\sigma^{(-1)}_\nu\oplus\sigma^{(1)}_\nu$, where the irreducible subrepresentation $\sigma^{(\epsilon)}$ is the $\epsilon$-eigenspace of $N_\nu\bk{w_{3},\widetilde{\chi},\bar{0}}$.
	It follows that $\mathcal{R}_{W_{M_E}}\bk{\widetilde{\chi}}=\set{1,w_3}$.
	\end{itemize}
	
	We note that $N_\nu\bk{w_1,\widetilde{\chi},\bar{0}}$ acts as an isomorphism, but is not an endomorphism.


	\item If $E_\nu=F_\nu\times F_\nu\times F_\nu$, $\Stab_{W_{M_E}}\bk{\widetilde{\chi}}=\gen{w_{13},w_{14},w_{34}}$.
	
	We note that each of the intertwining operators $N_\nu\bk{w_{13},\widetilde{\chi},\bar{0}}$, $N_\nu\bk{w_{14},\widetilde{\chi},\bar{0}}$ and $N_\nu\bk{w_{34},\widetilde{\chi},\bar{0}}$ is not a constant multiple; namely, its $-1$-eigenspace is non-zero.
	
	Indeed, by \Cref{Lemma:Intertwining_operator_of_simple_reflections}, any one of these intertwining operators factors through a subgroup $SL_2\bk{F_\nu}\times SL_2\bk{F_\nu}\times GL_2\bk{F_\nu}$ of $M_E\bk{F_\nu}$ associated to it.
	The decomposition in this case is a well known fact.

	It follows that $\mathcal{R}_{W_{M_E}}\bk{\widetilde{\chi}}=\set{1,w_{13},w_{14},w_{34}}$.
	
	Since $\mathcal{R}_{W_{M_E}}\bk{\widetilde{\chi}}$ has $4$ conjugacy classes, it follows that $I_{\bfP,\nu}\bk{\mu,\bar{0}}$ admits $4$ inequivalent components.
	We write
	$\Ind_{B_E\bk{F_\nu}\cap M_E\bk{F_\nu}}^{M_E\bk{F_\nu}}\bk{\widetilde{\chi}}=\sigma^{(-1,-1)}_\nu \oplus \sigma^{(-1,1)}_\nu \oplus \sigma^{(1,-1)}_\nu \oplus \sigma^{(1,1)}_\nu$,
	where:
	\begin{itemize}
		\item $\sigma^{(\epsilon,\delta)}_\nu$ is a $\epsilon$-eigenspace for $N_\nu\bk{w_{13},\widetilde{\chi},\bar{0}}$.
		
		\item $\sigma^{(\epsilon,\delta)}_\nu$ is a $\delta$-eigenspace for $N_\nu\bk{w_{14},\widetilde{\chi},\bar{0}}$.
		
		\item $\sigma^{(\epsilon,\delta)}_\nu$ is a $\bk{\epsilon\cdot\delta}$-eigenspace for $N_\nu\bk{w_{34},\widetilde{\chi},\bar{0}}$.
	\end{itemize}
	We note that all of $N_\nu\bk{w_{1},\widetilde{\chi},\bar{0}}$, $N_\nu\bk{w_{3},\widetilde{\chi},\bar{0}}$ and $N_\nu\bk{w_{4},\widetilde{\chi},\bar{0}}$ are isomorphisms, but are not endomorphisms.
	
	Another way to view this decomposition, in the spirit of \cite{MR620252}, is by restriction to $SL_2\bk{F_\nu}\times SL_2\bk{F_\nu}\times SL_2\bk{F_\nu}\subset M_{E}\bk{F_\nu}$.
	Write $\Ind_{\mathcal{B}\bk{F_\nu}}^{SL_2\bk{F_\nu}}\chi=\sigma_1\oplus\sigma_{-1}$.
	Then we have
	\[
	\Ind_{\coset{\mathcal{B}\times \mathcal{B}\times \mathcal{B}}\bk{F_\nu}}^{\coset{SL_2\times SL_2\times SL_2}\bk{F_\nu}} \bk{\chi_\nu\otimes \chi_\nu\otimes \chi_\nu} = \bigoplus_{\epsilon_1,\epsilon_3,\epsilon_4\in\set{1,-1}} \sigma_{\bk{\epsilon_1,\epsilon_3,\epsilon_4}},
	\]
	where
	\[
	\sigma_{\bk{\epsilon_1,\epsilon_3,\epsilon_4}} = \sigma_{\epsilon_1} \boxtimes \sigma_{\epsilon_3} \boxtimes \sigma_{\epsilon_4} .
	\]
	By applying $h_{\alpha_2}\bk{\unif}\in M_E$, we get
	\begin{align*}
	& \sigma^{\bk{1,1}}_\nu = \sigma_{\bk{1,1,1}} \oplus \sigma_{\bk{-1,-1,-1}} \\
	& \sigma^{\bk{1,-1}}_\nu = \sigma_{\bk{1,1,-1}} \oplus \sigma_{\bk{-1,-1,1}} \\
	& \sigma^{\bk{-1,1}}_\nu = \sigma_{\bk{1,-1,1}} \oplus \sigma_{\bk{-1,1,-1}} \\
	& \sigma^{\bk{-1,-1}}_\nu = \sigma_{\bk{-1,1,1}} \oplus \sigma_{\bk{1,-1,-1}} .
	\end{align*}
	
\end{itemize}

\mbox{}

Note that, for any $E_\nu$ and $\chi_\nu$, any component $\sigma$ of $\Ind_{B_E\bk{F_\nu}\cap M_E\bk{F_\nu}}^{M_E\bk{F_\nu}}\widetilde{\chi}$ is tempered.
By the Langlands Quotient Theorem, $\Ind_{P_E\bk{F_\nu}}^{H_E\bk{F_\nu}} \coset{\sigma \otimes \omega_{\alpha_2}}$ admits a unique irreducible quotient;
this quotient is the image of $N_\nu\bk{w_0,w_2^{-1}\cdot\mu_{\chi_\nu},\lambda_{\bk{0,1,0,0}}}$.

It follows that the maximal semi-simple quotient of $I_{B_E,\nu}\bk{w_2^{-1}\cdot\mu_{\chi_\nu},\lambda_{\bk{0,1,0,0}}}$ is given by
\begin{equation}
\label{Eq:chi_quadratic_s_1/2_maximal_ss_quotient}
\piece{
	\pi_{1,\nu},& \text{if $E_\nu$ is a field or $\chi_\nu\circ\Nm_{E_\nu\rmod F_\nu}=\Id$,} \\
	\pi_{1,\nu}\oplus \pi_{-1,\nu},& \text{if $E_\nu=F_\nu\times K_\nu$, where $K_\nu$ is a field} \\ & \text{and $\chi_\nu\circ\Nm_{E_\nu\rmod F_\nu}\neq\Id$,} \\
	\pi_{\bk{1,1},\nu} \oplus \pi_{\bk{-1,1},\nu} \oplus \pi_{\bk{1,1},\nu} \oplus \pi_{\bk{-1,-1},\nu},& \text{if $E_\nu=F_\nu\times F_\nu\times F_\nu$},
}
\end{equation}
where $\pi_{\epsilon,\nu}$ is the unique irreducible quotient of $\Ind_{P_E\bk{F_\nu}}^{H_E\bk{F_\nu}} \coset{\sigma^\epsilon_\nu \otimes \omega_{\alpha_2}}$.
Furthermore, this maximal semi-simple quotient is the image of $N_\nu\bk{w_0,\widetilde{\chi},\omega_{\alpha_2}}$.

By \Cref{eq:Local_Normalized_Intertwining_Operator_Cocycle_Condition}, we have
\[
N_\nu\bk{\widetilde{w_{213421342}},w_2\cdot\mu_{\chi_\nu},\omega_{\alpha_2}} =
N_\nu\bk{\widetilde{w_{21342}},\mu_{\chi_\nu},\lambda_{\frac{1}{2}}} \circ
N_\nu\bk{\widetilde{w_{134}},\mu_{\chi_\nu},\eta_{\frac{1}{2}}} \circ
N_\nu\bk{\widetilde{w_{2}},w_2\cdot\mu_{\chi_\nu},\omega_{\alpha_2}} .
\]
Since $N_\nu\bk{\widetilde{w_{2}},w_2\cdot\mu_{\chi_\nu},\omega_{\alpha_2}}$ is an isomorphism and the image of $N_\nu\bk{\widetilde{w_{134}},\mu_{\chi_\nu},\eta_{\frac{1}{2}}}$ is $I_\nu\bk{\chi_\nu,\frac{1}{2}}$, it follows that the image of $N_\nu\bk{\widetilde{w_{213421342}},\widetilde{\chi},\omega_{\alpha_2}}$ is a quotient of $I_\nu\bk{\chi_\nu,\frac{1}{2}}$.
In conclusion, \Cref{Eq:chi_quadratic_s_1/2_maximal_ss_quotient} is the maximal semi-simple quotient of $I_\nu\bk{\chi,\frac{1}{2}}$.

We note that, by \Cref{Lemma:Intertwining_operator_of_simple_reflections} and by the discussion above, the $\pi_{\bk{\epsilon},\nu}$ are subrepresentations of $I_{B_E,\nu}\bk{w_2\cdot\mu_{\chi_\nu},-\omega_2}$ and as such, they are eigenspaces of the intertwining operators mentioned above, in the following sense:
\begin{itemize}
	

	\item If $E_\nu=F_\nu\times K_\nu$, where $K_\nu$ is a field, then $\pi_{\bk{\epsilon},\nu}$ is an $\epsilon$-eigenspace of $N_\nu\bk{w_3,w_2\cdot\mu_{\chi_\nu},-\omega_2}$.
			
	\item If $E_\nu=F_\nu\times F_\nu\times F_\nu$, then
	\begin{itemize}
		\item $\pi_{(\epsilon,\delta),\nu}$ is a $\epsilon$-eigenspace for $N_\nu\bk{w_{13},w_2\cdot\mu_{\chi_\nu},-\omega_2}$.
		
		\item $\pi_{(\epsilon,\delta),\nu}$ is a $\delta$-eigenspace for $N_\nu\bk{w_{14},w_2\cdot\mu_{\chi_\nu},-\omega_2}$.
		
		\item $\pi_{(\epsilon,\delta),\nu}$ is a $\bk{\epsilon\cdot\delta}$-eigenspace for $N_\nu\bk{w_{34},w_2\cdot\mu_{\chi_\nu},-\omega_2}$.
	\end{itemize}
\end{itemize}

\begin{Remark}
	In the case $E_\nu=F_\nu\times F_\nu\times F_\nu$, we denote $\pi_{1,\nu}=\pi_{\bk{1,1},\nu}$.
	For all cases, if $\chi_\nu$ is unramified, then $\pi_{1,\nu}$ is unramified.
\end{Remark}

\end{proof}

\section{Square Integrable Residues}
\label{Sec:SquareIntegrableResidues}
We compute the square-integrable residues $Res\bk{s_0,\chi,E}$ for the various values of $s_0$ and $\chi$ separately.
The crux of the computations of the square integrable residues is that they decompose as a direct sum of irreducible representations.
Namely, one can write
\begin{equation}
\label{eq:SquareIntegrableDecomposition}
Res\bk{s_0,\chi,E} = \widehat{\oplus} \sigma_i,
\end{equation}
where $\sigma_i$ are irreducible quotients of $I_{P_E}\bk{\chi,s_0}$.
In particular, if, using Flath's theorem, we write $\sigma_i=\otimes_{i,\nu} \sigma_\nu$ then $\sigma_\nu$ is an irreducible quotient of $I_{\nu}\bk{\chi_\nu,s_0}$, unramified for almost all $\nu\in\Places$.

For each place $\nu$ denote by $\Sigma_\nu\bk{s_0,\chi_E}$ the (finite) set of irreducible quotients of $I_{\nu}\bk{\chi_\nu,s_0}$described in \Cref{Thm:local_results}.

\begin{Remark}
	Note that in all cases, the maximal semi-simple quotient of $I_{\nu}\bk{\chi_\nu,s_0}$ is a direct sum of inequivalent representations.
\end{Remark}

It follows that
\[
Res\bk{s_0,\chi,E} = \widehat{\oplus} \sigma_i \subset \bigotimes{} ' \bk{\bigoplus_{\pi_\nu\in\Sigma_\nu}\pi_\nu}.
\]
Hence, in order to compute $Res\bk{s_0,\chi,E}$, it is enough to check which of the direct summands in the right hand side is realized by the residue of $\Eisen_E\bk{\chi,f_s,s,\cdot}$.
Namely, assuming the pole of $\Eisen_E\bk{\chi,f_s,s,\cdot}$ at $s_0$ is of order $n$, $Res\bk{s_0,\chi,E}$ is the direct sum of all $\pi=\otimes ' \pi_\nu$, where $\pi_\nu\in \Sigma_\nu$ is spherical for almost all $\nu$, and appears in the image of
\[
\lim\limits_{s\to s_0} \bk{s-s_0}^n \Eisen_E\bk{\chi,f_s,s,\cdot} .
\]
In order to determine which $\pi=\otimes ' \pi_\nu$ appears in the image it is enough to check that 
\[
\lim\limits_{s\to s_0} \bk{s-s_0}^n \Eisen_E\bk{\chi,f_s,s,\cdot} \neq 0
\]
for some $f_s\in I_{P_E}\bk{\chi,s_0}$ whose image in $\pi$, under the quotient map, is non-zero.
In order to do this, we apply \Cref{Cor:Kernel_of_Series_is_kernel_of_CT} and show that
\[
\lim\limits_{s\to s_0} \bk{s-s_0}^m \bk{\suml_{w\in \Sigma^{\bfP}_{\bk{\chi,s_0,n}}} M\bk{w,\chi,\lambda_s}f_s} \not\equiv 0.
\]

\subsection{$s_0=\frac{5}{2}$ and $\chi=\Id$}

In this case, $\Eisen_E\bk{\Id,f,s,g}$ admits a simple pole.
We recall, from \cite{SegalEisen}, that
\[
\Sigma^{P_E}_{\bk{E,\Id,\frac{1}{2},1}} = \set{w_o}
\]
On the other hand, for any $\nu\in\Places$ it holds that $N_\nu\bk{w_0,\Id,\lambda_{\frac{5}{2}}}\bk{I_\nu\bk{\Id,\frac{5}{2}}}=\Id_{H_E,\nu}$.
Hence,
$Res\bk{\frac{5}{2},\Id,E}\equiv\Id_{H_E}$, where $\Id_{H_E}$ is the trivial representation of $H_E$.



%

\subsection{$s_0=\frac{3}{2}$ and $\chi=\chi_E$}
In \cite[Section 5]{MR1918673} it is proven that 
\begin{equation}
Res\bk{\frac{3}{2},\chi_E,E} \equiv \Pi_E ,
\end{equation}
where $\Pi_E$ is the minimal representation of $H_E\bk{\A}$.
Indeed, this follows from \cref{Thm:local_results} and \Cref{eq:SquareIntegrableDecomposition}.

\subsection{$s_0=\frac{1}{2}$, $\chi=\Id$ and $E$ is a Field}
In this case, $\Eisen_E\bk{\Id,f,s,g}$ admits a simple pole.
We recall, from \cite{SegalEisen}, that
\[
\begin{split}
& \Sigma^{P_E}_{\bk{E,\Id,\frac{1}{2},2}} = \set{w_{212}, w_{2121}} \\
& \Sigma^{P_E}_{\bk{E,\Id,\frac{1}{2},1}} = \set{w_{21}, w_{212}, w_{2121}, w_{21212}}
\end{split}
\]
and that
\[
\Sigma^{P_E}_{\bk{E,\Id,\frac{1}{2},1}}\rmod\sim_{\bk{\chi,s_0}} = \set{\set{w_{21},w_{21212}}, \set{w_{212}, w_{2121}}} .
\]


Let
\begin{equation}
\label{Eq:Definition_of_inert_places}
\Places_{inert} = \set{\nu\in\Places\mvert \text{$E_\nu$ is a field}}.
\end{equation}

From \Cref{eq:SquareIntegrableDecomposition} it follows that
\begin{equation}
Res\bk{\frac{1}{2},\Id,E} = \widehat{\oplus} \Sigma_i \subseteq \bigoplus_{\stackrel{S\subset \Places_{inert}}{\Card{S}<\infty}} \pi_S ,
\end{equation}
where 
\begin{equation}
\label{eq:Pi_S_1/2_chi_trivial_E_field}
\pi_S = \bigotimes_{\nu\in S} \pi_{-2,\nu} \otimes \bigotimes_{\nu\notin S}{'} \pi_{1,\nu} .
\end{equation}
In particular, in order to determine $Res\bk{\frac{1}{2},\Id,E}$ it is enough to determine for which finite subsets $S\subset\Places_{inert}$ will $\pi_S$ appear in $Res\bk{\frac{1}{2},\Id,E}$.

We note that, given a finite subsets $S\subset\Places_{inert}$ and an element $\varphi\in \pi_S$, it holds that
\[
M\bk{w_{212},w_{21}^{-1}\cdot\chi_s}\varphi = \bk{-2}^{\Card{S}-1} \varphi
\]
So
\[
\lim\limits_{s\to\frac{1}{2}} \bk{s-\frac{1}{2}} \coset{M\bk{w_{21},\chi_s} + M\bk{w_{21212},\chi_s}}
\]
vanish on $I\bk{\Id,\frac{1}{2}}$ if and only if $\Card{S}=1$.
Hence,
\begin{equation}
\label{eq:Res_subset_of _sum_E_1/2}
\bigoplus_{\stackrel{S\subset \Places_{inert}}{1\neq\Card{S}<\infty}} \pi_S \subseteq Res\bk{\frac{1}{2},\Id,E}.
\end{equation}
We wish to show that this is an equality.

We consider the residue of the constant term
\begin{align*}
& \lim\limits_{s\to\frac{1}{2}} \bk{s-\frac{1}{2}}
\coset{\Eisen_E\bk{\Id,f,s,g}_\bfCT} \\
& =
\lim\limits_{s\to\frac{1}{2}} \bk{s-\frac{1}{2}} \coset{\bk{M\bk{w_{21},\chi_s} + M\bk{w_{21212},\chi_s}}f_s\bk{g}} \\
& + \lim\limits_{s\to\frac{1}{2}} \bk{s-\frac{1}{2}} \coset{\bk{M\bk{w_{212},\chi_s} + M\bk{w_{2121},\chi_s}}f_s\bk{g}} .
\end{align*}
We note that
\begin{align*}
& \ker\coset{\Res_{s=\frac{1}{2}}
\Eisen_E\bk{\Id,\cdot,s,\cdot}_\bfCT} \\
& = \ker \coset{\Res_{s=\frac{1}{2}}\bk{M\bk{w_{21},\chi_s} + M\bk{w_{21212},\chi_s}}} \bigcap
\ker \coset{\Res_{s=\frac{1}{2}}\bk{M\bk{w_{212},\chi_s} + M\bk{w_{2121},\chi_s}}}
\end{align*}

It is thus enough to show that
\begin{equation}
\label{eq:ker_contained_in_ker}
ker \coset{\Res_{s=\frac{1}{2}}\bk{M\bk{w_{21},\chi_s} + M\bk{w_{21212},\chi_s}}} \subseteq
\ker \coset{\Res_{s=\frac{1}{2}}\bk{M\bk{w_{212},\chi_s} + M\bk{w_{2121},\chi_s}}}
\end{equation}

Noting that
\[
M\bk{w_{212},\chi_s} + M\bk{w_{2121},\chi_s} = \bk{I+M\bk{w_1,w_{212}^{-1}\cdot\chi_s}} M\bk{w_{212},\chi_s}
\]
We write the Laurent series of $I+M\bk{w_1,w_{212}^{-1}\cdot\chi_s}$ and $M\bk{w_{212},\chi_s}$ in a neighbourhood of $s=\frac{1}{2}$:
\begin{align*}
& I+M\bk{w_1,w_{212}^{-1}\cdot\chi_s} = A_1 \bk{s-\frac{1}{2}} +A_2 \bk{s-\frac{1}{2}}^2+... \\
& M\bk{w_{212},\chi_s} = \frac{B_{-2}}{\bk{s-\frac{1}{2}}^2} + \frac{B_{-1}}{\bk{s-\frac{1}{2}}} + B_0 + ... 
\end{align*}
Composing the two series we get
\[
M\bk{w_{212},\chi_s} + M\bk{w_{2121},\chi_s}
 = \frac{A_1\circ B_{-2}}{\bk{s-\frac{1}{2}}} + \bk{A_2\circ B_{-2}+A_1\circ B_{-1}} + ...
\]
Hence
\[
\lim\limits_{s\to\frac{1}{2}} \bk{s-\frac{1}{2}} \coset{\bk{M\bk{w_{212},\chi_s} + M\bk{w_{2121},\chi_s}}f_s\bk{g}} = A_1\circ B_{-2} .
\]
Since $\Image\bk{A_{-2}}=\pi_\emptyset$ \Cref{eq:ker_contained_in_ker} follows.

Alternatively, the equality in \Cref{eq:Res_subset_of _sum_E_1/2} follows from the results of \cite{LaoResidualSpectrum}.
We note that
\begin{equation}
Res\bk{\frac{1}{2},\Id,E} \subseteq L^2_{\coset{B,\mu_\chi}} ,
\end{equation}
where $L^2_{\bk{B,\mu_\chi}}$ is the subspace of $L^2\bk{H_E\bk{F}\lmod H_E\bk{\A}}$ spanned by automorphic forms with cuspidal data $\coset{B,\mu_\chi}$.
We recall from \cite{LaoResidualSpectrum} that 
\[
L^2_{\coset{B,\mu_\chi}} = \bigoplus_{\stackrel{S\subset \Places_{inert}}{1\neq\Card{S}<\infty}} \pi_S
\]
and hence also
\begin{equation}
\label{eq:SI-1/2-triv-field}
Res\bk{\frac{1}{2},\Id,E} = \bigoplus_{\stackrel{S\subset \Places_{inert}}{1\neq\Card{S}<\infty}} \pi_S .
\end{equation}

\subsection{$s_0=\frac{1}{2}$, $\chi^2=\Id\neq \chi\circ \Nm_{E\rmod F}$}

We separate the discussion into the various kinds of \'etale cubic algebras.
We partition $\Places$ as follows,
\begin{equation}
\Places = \Places^{\bk{E,\chi}}_{sph} \bigcupdot \Places^{\bk{E,\chi}}_{1} \bigcupdot \Places^{\bk{E,\chi}}_{2} \bigcupdot \Places^{\bk{E,\chi}}_{3},
\end{equation}
where
\begin{itemize}
\item If $\nu\in\Places^{\bk{E,\chi}}_{sph}$ then $I_\nu\bk{\chi_\nu,\frac{1}{2}}$ admits a unique irreducible (spherical) quotient. Namely, $E_\nu$ is not a field and $\chi_\nu\circ\Nm_{K_\nu\rmod F_\nu}=\Id_\nu$ or, $E_\nu$ is a field and $\chi_\nu\circ\Nm_{E_\nu\rmod F_\nu}\neq\Id_\nu$.
\item If $\nu\in\Places^{\bk{E,\chi}}_{1}$ then $E_\nu$ is a field and $\chi_\nu=Id_\nu$.
\item If $\nu\in\Places^{\bk{E,\chi}}_{2}$ then $E_\nu=F_\nu\times K_\nu$ where $K_\nu$ is a field and $\chi_\nu\circ\Nm_{K_\nu\rmod F_\nu}\neq\Id_\nu$.
\item If $\nu\in\Places^{\bk{E,\chi}}_{3}$ then $E_\nu=F_\nu\times F_\nu\times F_\nu$ and $\chi_\nu\neq\Id_\nu$.
\end{itemize}

Also, let
\begin{equation}
\label{Eq:Definition_of_non-sph_Places}
\Places^{\bk{E,\chi}}_{non-sph} = \Places^{\bk{E,\chi}}_{1} \bigcupdot \Places^{\bk{E,\chi}}_{2} \bigcupdot \Places^{\bk{E,\chi}}_{3} .
\end{equation}

We now turn to compute $Res\bk{\frac{1}{2},\chi,E}$ for the various possible cases.
This is done by comparing the action of
\[
\lim\limits_{s\to s_0} \bk{s-s_0} \suml_{w\in \Sigma^{P_E}_{\bk{\chi,\frac{1}{2},1}}} M\bk{w,\chi,\lambda_s}
\]
on quotients of $I\bk{\chi,\frac{1}{2}}$.
This is done by identifying these quotients as images or eigenspaces of various intertwining operators.


\subsubsection{$E$ is a Field}
In this case
\[
\Sigma^{P_E}_{\bk{E,\chi,\frac{1}{2},1}}\rmod\sim_{\bk{\chi,\frac{1}{2}}} = \set{\set{w_{212}},\set{w_{2121}},\set{w_{21212}}} ,
\]
where all three elements are mutually inequivalent and their associated intertwining operators admit simple poles at $s=\frac{1}{2}$.
Let $S\subset \Places^{\bk{E,\chi}}_{1} \bigcupdot \Places^{\bk{E,\chi}}_{2} \bigcupdot \Places^{\bk{E,\chi}}_{3}$ be a finite subset such that
\[
\dot{S}= S_{1} \bigcupdot S_{2} \bigcupdot \bk{S_{3,\bk{-1,1}} \bigcupdot S_{3,\bk{1,-1}} \bigcupdot S_{3,\bk{-1,-1}} },
\]
with
\begin{itemize}
\item $S_{1} = S\cap \Places^{\bk{E,\chi}}_{1} \subset \Places^{\bk{E,\chi}}_{1}$.
\item $S_{2} = S\cap \Places^{\bk{E,\chi}}_{2} \subset \Places^{\bk{E,\chi}}_{2}$.
\item $S_{3,\bk{\epsilon_1,\epsilon_2}} \subset \Places^{\bk{E,\chi}}_{3}$.
\end{itemize}
Note that $\dot{S}$ is a set with a choice of the distinct subsets $S_{3,\bk{\epsilon_1,\epsilon_2}}$ and $S$ denotes its underlying set.
We let
\begin{equation}
\label{eq:Pi_S_1/2_chi_non-trivial_E_field}
\pi_{\dot{S}} = \bk{\bigotimes_{\nu\in S_1} \pi_{-2,\nu}} \otimes \bk{\bigotimes_{\nu\in S_2} \pi_{-1,\nu}} \otimes \bk{
	\bigotimes_{\nu\in S_{3,\bk{\epsilon_1,\epsilon_2}}} \pi_{\bk{\epsilon_1,\epsilon_2},\nu}} \otimes \bk{\bigotimes_{\nu\notin S}{'} \pi_{1,\nu}} .
\end{equation}

It follows from the discussion above that $\pi_{S,\nu}\subset N_\nu\bk{w_{21212},\chi_{\frac{1}{2},\nu}}\bk{I_\nu\bk{\chi_\nu,\frac{1}{2}}}$ .
It follows that $\pi_S$ is a  direct summand of $Res\bk{\frac{1}{2},\chi,E}$.
On the other hand,
\[
Res\bk{\frac{1}{2},\chi,E} \subseteq \bigoplus_{\stackrel{\dot{S}\subset \Places^{\bk{E,\chi}}_{non-sph}}{\Card{S}<\infty}} \pi_{\dot{S}}
\]
and hence
\begin{equation}
Res\bk{\frac{1}{2},\chi,E} = \bigoplus_{\stackrel{\dot{S}\subset \Places^{\bk{E,\chi}}_{non-sph}}{\Card{S}<\infty}} \pi_{\dot{S}}.
\end{equation}

\subsubsection{$E=F\times K$ and $K$ is a Field}
In this case
\[
\Sigma^{P_E}_{\bk{E,\chi,\frac{1}{2},1}}\rmod\sim_{\bk{\chi,\frac{1}{2}}} =
\set{ \set{w_{2321}, w_{2132132}}, \set{w_{2132}, w_{21323}}, \set{w_{21321}, w_{213213}}} .
\]
We recall from \cite{SegalEisen} that the poles of $M\bk{w_{2132},\chi_s}$ and $M\bk{w_{21323},\chi_s}$ cancel each other.
The other poles do not cancel and we wish to analyze the image of the intertwining operator
\[
\lim\limits_{s\to \frac{1}{2}} \bk{s-\frac{1}{2}} \coset{M\bk{w_{232123},\chi_s} + M\bk{w_{23212},\chi_s} + M\bk{w_{2321232},\chi_s} + M\bk{w_{2321},\chi_s}} .
\]
Note that
\begin{align*}
& w_{2321232} = w_{2321}w_{232}, \\
& w_{213213} = w_{21321}w_{3} \\
\end{align*}
and
\begin{align*}
& J\bk{w_3,w_{21321}^{-1}\cdot\chi_s} = \frac{\Lfun_K\bk{s-\frac{1}{2},\chi\circ\Nm_{K\rmod F}}}{\Lfun_K\bk{s+\frac{1}{2},\chi\circ\Nm_{K\rmod F}}} \\
& J\bk{w_{232},w_{2321}^{-1}\cdot\chi_s} = 
\frac{\Lfun_F\bk{s-\frac{3}{2},\chi} \Lfun_F\bk{s+\frac{1}{2},\chi} \Lfun_K\bk{s-\frac{1}{2},\chi\circ\Nm_{K\rmod F}}}{\Lfun_F\bk{s-\frac{1}{2},\chi} \Lfun_F\bk{s+\frac{3}{2},\chi} \Lfun_K\bk{s+\frac{1}{2},\chi\circ\Nm_{K\rmod F}}} .
\end{align*}
According to \cite[Theorem 2.2]{MR2882696} it follows that $\epsilon_F\bk{s,\chi}\equiv 1$ and $\epsilon_K\bk{s,\chi\circ\Nm_{K\rmod F}}\equiv 1$ and hence
\begin{align*}
& \Lfun_F\bk{s,\chi} = \Lfun_F\bk{1-s,\chi} \\
& \Lfun_K\bk{s,\chi\circ\Nm_{K\rmod F}} = \Lfun_K\bk{1-s,\chi\circ\Nm_{K\rmod F}} .
\end{align*}
It follows that
\[
\lim\limits_{s\to\frac{1}{2}} J\bk{w_3,w_{21321}^{-1}\cdot\chi_s} = 1,\quad
\lim\limits_{s\to\frac{1}{2}} J\bk{w_{232},w_{2321}^{-1}\cdot\chi_s} = 1 .
\]

Let $S\subset \Places^{\bk{E,\chi}}_{2} \bigcupdot \Places^{\bk{E,\chi}}_{3}$ be a finite subset such that
\[
\dot{S}= S_{2} \bigcupdot \bk{S_{3,\bk{-1,1}} \bigcupdot S_{3,\bk{1,-1}} \bigcupdot S_{3,\bk{-1,-1}} },
\]
with
\begin{itemize}
\item $S_{2} \subset \Places^{\bk{E,\chi}}_{2}$.
\item $S_{3,\bk{\epsilon_1,\epsilon_2}} \subset \Places^{\bk{E,\chi}}_{3}$.
\end{itemize}
We let
\begin{equation}
\label{eq:Pi_S_1/2_chi_non-trivial_FxK}
\pi_{\dot{S}} = \bk{\bigotimes_{\nu\in S_2} \pi_{-1,\nu}} \otimes \bk{
	\bigotimes_{\nu\in S_{3,\bk{\epsilon_1,\epsilon_2}}} \pi_{\bk{\epsilon_1,\epsilon_2},\nu}} \otimes \bk{\bigotimes_{\nu\notin S}{'} \pi_{1,\nu}} .
\end{equation}

The irreducible representation $\pi_{\dot{S}}$ appears in the images of all
\begin{align*}
& \lim\limits_{s\to\frac{s}{2}} \bk{s-\frac{1}{2}} M\bk{w_{2321},\chi_s} \\
& \lim\limits_{s\to\frac{s}{2}} \bk{s-\frac{1}{2}} M\bk{w_{21321},\chi_s} \\
& \lim\limits_{s\to\frac{s}{2}} \bk{s-\frac{1}{2}} M\bk{w_{213213},\chi_s} \\
& \lim\limits_{s\to\frac{s}{2}} \bk{s-\frac{1}{2}} M\bk{w_{2132132},\chi_s} .
\end{align*}

On the other hand, since $w_3=\widetilde{w_{34}}$ and $w_{232}=\widetilde{w_{2342}}$, we note that both $M\bk{w_3,w_{21321}^{-1}\cdot\chi_{\frac{1}{2}}}$ and $M\bk{w_{232},w_{2321}^{-1}\cdot\chi_{\frac{1}{2}}}$ acts on $\pi_S$ by
\[
\bk{-1}^{\Card{S_2}+\Card{S_{3,\bk{-1,1}}} +\Card{S_{3,\bk{-1,1}}} } \operatorname{Id} .
\]
It follows that
\begin{equation}
Res\bk{\frac{1}{2},\chi,F\times K} = \bigoplus_{\stackrel{\dot{S}\subset \Places^{\bk{E,\chi}}_{non-sph}}{\Card{S^\ast} \text{ is even}}} \pi_{\dot{S}} ,
\end{equation}
where
\begin{equation}
\label{Eq:S^ast_def}
S^\ast = S_{2} \bigcupdot S_{3,\bk{-1,1}} \bigcupdot S_{3,\bk{-1,1}}
\end{equation}

\subsubsection{$E=F\times F\times F$}
In this case
\[
\begin{split}
\Sigma^{P_E}_{\bk{E,\chi,\frac{1}{2},1}}\rmod\sim_{\bk{\chi,\frac{1}{2}}} = &
\left\{ \set{w_{21324}, w_{21423}, w_{23421}, w_{213421342}} \right. \\
& \set{w_{21342}, w_{2134213}, w_{2134214}, w_{2134234}} \\
& \left. \set{w_{213421}, w_{213423}, w_{213424}, w_{21342134}} \right\}
\end{split}
\]

Note that
\begin{align*}
& w_{213421342} = w_{21324} w_{2132} = w_{21423} w_{2142} = w_{23421} w_{2342} \\
& w_{21342134} = w_{213424} w_{13} = w_{213423} w_{14} = w_{213421} w_{34} \\
& w_{2134213} = w_{21342} w_{13} \\
& w_{2134214} = w_{21342} w_{14} \\
& w_{2134234} = w_{21342} w_{34} .
\end{align*}

Furthermore,
\begin{align*}
& J\bk{w_{2132},w_{21324}^{-1}\cdot\chi_s} = J\bk{w_{2142},w_{21423}^{-1}\cdot\chi_s} = J\bk{w_{2342},w_{23421}^{-1}\cdot\chi_s} = \frac{\Lfun\bk{s-\frac{1}{2},\chi} \Lfun\bk{s-\frac{3}{2},\chi}}{\Lfun\bk{s+\frac{1}{2},\chi} \Lfun\bk{s+\frac{3}{2},\chi}} \\
& J\bk{w_{13}, w_{21342}^{-1}\cdot\chi_s} = J\bk{w_{14}, w_{21342}^{-1}\cdot\chi_s} = J\bk{w_{34}, w_{21342}^{-1}\cdot\chi_s} = \bk{\frac{\Lfun\bk{s-\frac{1}{2},\chi}}{\Lfun\bk{s+\frac{1}{2},\chi}}}^2 \\
& J\bk{w_{13}, w_{213424}^{-1}\cdot\chi_s} = J\bk{w_{14}, w_{213423}^{-1}\cdot\chi_s} = J\bk{w_{34}, w_{213421}^{-1}\cdot\chi_s} = \bk{\frac{\Lfun\bk{s-\frac{1}{2},\chi}}{\Lfun\bk{s+\frac{1}{2},\chi}}}^2 .
\end{align*}

According to \cite[Theorem 2.2]{MR2882696} it follows that $\epsilon_F\bk{s,\chi}\equiv 1$ and hence
\[
\Lfun_F\bk{s,\chi} = \Lfun_F\bk{1-s,\chi}
\]
It follows that
\[
\lim\limits_{s\to\frac{1}{2}} \frac{\Lfun\bk{s-\frac{1}{2},\chi} \Lfun\bk{s-\frac{3}{2},\chi}}{\Lfun\bk{s+\frac{1}{2},\chi} \Lfun\bk{s+\frac{3}{2},\chi}} =
\lim\limits_{s\to\frac{1}{2}} \bk{\frac{\Lfun\bk{s-\frac{1}{2},\chi}}{\Lfun\bk{s+\frac{1}{2},\chi}}}^2 = 1 .
\]

Let $S\subset \Places^{\bk{E,\chi}}_{3}$ be a finite subset such that
\[
\dot{S} = S_{3,\bk{-1,1}} \bigcupdot S_{3,\bk{1,-1}} \bigcupdot S_{3,\bk{-1,-1}},
\]
with $S_{3,\bk{\epsilon_1,\epsilon_2}} \subset \Places^{\bk{E,\chi}}_{3}$.
We let
\begin{equation}
\label{eq:Pi_S_1/2_chi_non-trivial_FxFxF}
\pi_{\dot{S}} = \bk{\bigotimes_{\nu\in S_2} \pi_{-1,\nu}} \otimes \bk{
	\bigotimes_{\nu\in S_{3,\bk{\epsilon_1,\epsilon_2}}} \pi_{\bk{\epsilon_1,\epsilon_2},\nu}} \otimes \bk{\bigotimes_{\nu\notin S}{'} \pi_{1,\nu}} .
\end{equation}
The irreducible representation $\pi_{\dot{S}}$ appears in the images of all
\[
\lim\limits_{s\to\frac{s}{2}} \bk{s-\frac{1}{2}} M\bk{w,\chi_s},
\]
where $w\in \Sigma^{P_E}_{\bk{F\times F\times F,\chi,\frac{1}{2},1}}$.

Fix a standard section $f_s\in I\bk{\chi,s}$ so that
\[
\lim\limits_{s\to\frac{s}{2}} \bk{s-\frac{1}{2}} M\bk{w_{21342},\chi_s} f_s = \varphi \in \pi_S
\]
A similar analysis to the one performed in the previous cases shows that
\begin{align*}
& \lim\limits_{s\to\frac{s}{2}} \bk{s-\frac{1}{2}} \suml_{w\in \Sigma^{P_E}_{\bk{F\times F\times F,\chi,\frac{1}{2},1}}} M\bk{w,\chi_s} = \\
& 4\bk{1 + \bk{-1}^{\Card{S_{3,\bk{1,-1}}}} + \bk{-1}^{\Card{S_{3,\bk{-1,1}}} }  + \bk{-1}^{\Card{S_{3,\bk{-1,-1}}}}} \varphi .
\end{align*}
It follows that
\begin{equation}
Res\bk{\frac{1}{2},\chi,F\times F\times F} = \bigoplus_{\stackrel{\dot{S}\subset \Places^{\bk{E,\chi}}_{non-sph}}{ \dot{S}\in \mathcal{V}^{\bk{E,\chi}} }} \pi_{\dot{S}} ,
\end{equation}
where
\begin{equation}
\label{Eq:sets_of_places_which_appear_in_residue_E_split_chi_quad}
\mathcal{V}^{\bk{E,\chi}} =
\set{\dot{S}\subset \Places^{\bk{E,\chi}}_{non-sph} \mvert 
	\begin{array}{l}
	\Card{S_{3,\bk{-1,1}}} \cdot \Card{S_{3,\bk{1,-1}}} \\
	\equiv \Card{S_{3,\bk{-1,1}}} \cdot \Card{S_{3,\bk{-1,-1}}} \\
	\equiv \Card{S_{3,\bk{1,-1}}} \cdot \Card{S_{3,\bk{-1,-1}}} \pmod{2}
	\end{array}} .
\end{equation}

\subsection{Summary}

We record the results of this section as a theorem.
\begin{Thm}
	\label{Thm:Main_Sq_Int}
	The square integrable residues $Res\bk{s_0,\chi,E}$ with $\Real\bk{s_0}>0$ are given as follows:
	\begin{enumerate}
		\item If $s_0=\frac{5}{2}$ and $\chi=\Id$ then $Res\bk{s_0,\chi,E}=\Id$, the trivial representation of $H_E\bk{\A}$, for any $E$.
		
		\item If $s_0=\frac{3}{2}$ and $\chi=\chi_E$, where $E$ is a Galois \'etale cubic algebra over $F$, then $Res\bk{s_0,\chi,E}=\Pi_E$, the minimal representation $\Pi_E$ of $H_E\bk{\A}$.
		
		\item If $s_0=\frac{1}{2}$, $\chi=\Id$ and $E$ is a field extension then
		\begin{equation}
		Res\bk{\frac{1}{2},\Id,E} = \bigoplus_{\stackrel{S\subset \Places_{inert}}{1\neq\Card{S}<\infty}} \pi_S ,
		\end{equation}
		where $\Places_{inert}$ is defined in \Cref{Eq:Definition_of_inert_places} and $\pi_S$ is given by \Cref{eq:Pi_S_1/2_chi_trivial_E_field}.
		
		\item Assume that $s_0=\frac{1}{2}$ and $\chi^2=\Id\neq\chi\circ\Nm_{E\rmod F}$ and recall the definition of $\Places^{\bk{E,\chi}}_{non-sph}$ from \Cref{Eq:Definition_of_non-sph_Places}.
		\begin{itemize}
			\item If $E$ is field then
			\begin{equation}
			Res\bk{\frac{1}{2},\chi,E} = \bigoplus_{\stackrel{\dot{S}\subset \Places^{\bk{E,\chi}}_{non-sph}}{\Card{S}<\infty}} \pi_{\dot{S}}.
			\end{equation}
			where $\pi_S$ is given by \Cref{eq:Pi_S_1/2_chi_non-trivial_E_field}.
			
			\item If $E=F\times K$, where $K$ is field, then
			\begin{equation}
			Res\bk{\frac{1}{2},\chi,F\times K} = \bigoplus_{\stackrel{\dot{S}\subset \Places^{\bk{E,\chi}}_{non-sph}}{\Card{S^\ast} \text{ is even}}} \pi_{\dot{S}} ,
			\end{equation}
			where $\pi_{\dot S}$ is given by \Cref{eq:Pi_S_1/2_chi_non-trivial_FxK} and $S^\ast$ is given in \Cref{Eq:S^ast_def}.
			
			\item If $E=F\times F\times F$ then
			\begin{equation}
			\label{Eq:Sq_int_thm_split_case_nontrivial_char}
			Res\bk{\frac{1}{2},\chi,F\times F\times F} = \bigoplus_{\stackrel{\dot{S}\subset \Places^{\bk{E,\chi}}_{non-sph}}{ \dot{S}\in \mathcal{V}^{\bk{E,\chi}} }} \pi_{\dot{S}} ,
			\end{equation}
			where $\pi_{\dot S}$ is given by \Cref{eq:Pi_S_1/2_chi_non-trivial_FxFxF} and $\mathcal{V}^{\bk{E,\chi}}$ is given by \Cref{Eq:sets_of_places_which_appear_in_residue_E_split_chi_quad}.
		\end{itemize}
	\end{enumerate}
\end{Thm}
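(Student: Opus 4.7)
The plan is to treat the four cases separately, in each case combining \Cref{Thm:local_results} with the constant term analysis of \Cref{Cor:Kernel_of_Series_is_kernel_of_CT}. Since the residual spectrum is a Hilbert direct sum of irreducibles and $Res\bk{s_0,\chi,E}$ is a quotient of $I\bk{\chi,s_0}$, every irreducible summand is a restricted tensor product $\pi=\bigotimes{}'_\nu \pi_\nu$ with $\pi_\nu$ in the finite set $\Sigma_\nu$ listed in \Cref{Thm:local_results}, and spherical almost everywhere. The question is: for which such $\pi$ is the image of $\lim_{s\to s_0}\bk{s-s_0}^n\sum_{w\in\Sigma^{P_E}_{\bk{\chi,s_0,n}}} M\bk{w,\chi_s}$ non-zero on a lift of $\pi$? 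Throughout I would partition $\Sigma^{P_E}_{\bk{\chi,s_0,n}}$ into $\sim_{\bk{\chi,s_0}}$-equivalence classes, treating cancellations within each class independently.

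Cases (1) and (2) are essentially immediate: for $s_0=\tfrac{5}{2}$, $\Sigma^{P_E}_{\bk{\Id,5/2,1}}=\set{w_0}$ and each local $N_\nu\bk{w_0,\Id,\lambda_{5/2}}$ has image the trivial representation; for $s_0=\tfrac{3}{2}$ and $\chi=\chi_E$, by \cite[Proposition 4.3]{MR1918673} the unique local quotient at each place is the minimal representation, so the global residue is $\Pi_E$. For case (3) one lists $\Sigma^{P_E}_{\bk{E,\Id,1/2,1}}/\!\sim$, finding two classes $\set{w_{21},w_{21212}}$ and $\set{w_{212},w_{2121}}$; the latter has double-pole operators whose top-order part cancels by \cite{SegalEisen}. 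Using the cocycle factorization $M\bk{w_{21212},\chi_s}=M\bk{w_{212},w_{21}^{-1}\cdot\chi_s}\circ M\bk{w_{21},\chi_s}$, a direct scalar computation via \Cref{Lemma:Intertwining_operator_of_simple_reflections} shows the first class acts on a lift of $\pi_S$ by a scalar of the form $\bk{-2}^{\Card{S}-1}$, which vanishes precisely when $\Card{S}=1$. To rule out further constraints from the second class, I would factor $M\bk{w_{212},\chi_s}+M\bk{w_{2121},\chi_s}=\bk{I+M\bk{w_1,w_{212}^{-1}\cdot\chi_s}}\circ M\bk{w_{212},\chi_s}$, expand in Laurent series around $s=\tfrac12$, and observe that the top coefficient of the product lies in the image of the double-pole coefficient of $M\bk{w_{212},\chi_s}$, which is the all-spherical component, on which $I+M\bk{w_1,\cdot}$ vanishes; thus the second class imposes no new constraint.

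Case (4) uses the same template, but the parity conditions are more intricate. For $E$ a field, $\Sigma^{P_E}_{\bk{E,\chi,1/2,1}}/\!\sim$ consists of three singletons with mutually inequivalent images, so no parity constraint appears. For $E=F\times K$, after the known cancellation between $M\bk{w_{2132},\chi_s}$ and $M\bk{w_{21323},\chi_s}$ from \cite{SegalEisen}, I would factor $w_{2132132}=w_{2321}w_{232}$ and $w_{213213}=w_{21321}w_3$, invoke $\epsilon_F\bk{s,\chi}\equiv 1$ and $\epsilon_K\bk{s,\chi\circ\Nm_{K\rmod F}}\equiv 1$ from \cite[Theorem 2.2]{MR2882696} to trivialize the Gindikin-Karpelevich ratios for $w_3$ and $w_{232}$ at $s=\tfrac12$, and then conclude via \Cref{Lemma:Intertwining_operator_of_simple_reflections} that both of these rank-one operators act on $\pi_{\dot S}$ by the sign $\bk{-1}^{\Card{S^\ast}}$, giving the even-$\Card{S^\ast}$ condition.

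The hard part, and the source of the new parity condition in \Cref{Eq:Sq_int_thm_split_case_nontrivial_char}, is the case $E=F\times F\times F$. Here three distinct equivalence classes of rank-two intertwining operators must be handled simultaneously: for each class I would factor through $M\bk{w_{21342},\chi_s}$ and one of the rank-two operators $M\bk{w_{ij},\cdot}$ with $\set{i,j}\subset\set{1,3,4}$, apply the cocycle relation \Cref{eq:Global_Intertwining_Operator_Cocycle_Condition} together with the $\epsilon$-factor input to trivialize the normalizing ratios, and use the eigenvalue structure of $\pi_{\bk{\epsilon,\delta},\nu}$ from \Cref{Thm:local_results} to express each class-sum on $\pi_{\dot S}$ as a scalar of the form $4\bk{1+\bk{-1}^{\Card{S_{3,\bk{1,-1}}}}+\bk{-1}^{\Card{S_{3,\bk{-1,1}}}}+\bk{-1}^{\Card{S_{3,\bk{-1,-1}}}}}$, up to triality permutation. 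The main obstacle is then combinatorial: showing that the simultaneous vanishing of the three triality-conjugate sums is equivalent to the shared pairwise-parity condition defining $\mathcal{V}^{\bk{E,\chi}}$ in \Cref{Eq:sets_of_places_which_appear_in_residue_E_split_chi_quad}, rather than something strictly stronger or weaker; this is a finite enumeration over the joint sign pattern, but care is needed because any one class-sum alone admits more vanishing patterns than the joint system enforces.
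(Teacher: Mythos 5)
Your proposal tracks the paper's proof essentially step for step: the same reduction to the local quotients of \Cref{Thm:local_results}, the same passage to the constant term via \Cref{Cor:Kernel_of_Series_is_kernel_of_CT}, the same equivalence classes, factorizations and $\epsilon$-factor inputs in each case, and the same Laurent-expansion argument disposing of the class $\set{w_{212},w_{2121}}$ in case (3) (where the paper also notes the equality can alternatively be read off from \cite{LaoResidualSpectrum}).

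The one point that needs repair before the argument could actually be carried out is the split case $E=F\times F\times F$, and it is an internal inconsistency in your plan. Writing $a=\Card{S_{3,\bk{1,-1}}}$, $b=\Card{S_{3,\bk{-1,1}}}$, $c=\Card{S_{3,\bk{-1,-1}}}$, it is not true that each of the three equivalence classes acts on $\pi_{\dot{S}}$ by $4\bk{1+\bk{-1}^{a}+\bk{-1}^{b}+\bk{-1}^{c}}$ up to triality permutation. The class through $w_{21342}$ factors as $\bk{I+M\bk{w_{13},\cdot}+M\bk{w_{14},\cdot}+M\bk{w_{34},\cdot}}\circ M\bk{w_{21342},\chi_s}$, and since $\pi_{\bk{\epsilon,\delta},\nu}$ is the $\epsilon$-, $\delta$- and $\epsilon\delta$-eigenspace of the three operators, this class contributes the scalar $1+\bk{-1}^{b+c}+\bk{-1}^{a+c}+\bk{-1}^{a+b}$; this is symmetric in $a,b,c$ (so triality permutation changes nothing), and it vanishes exactly when $a,b,c$ are not all of the same parity --- a strictly larger vanishing set than $\set{1+\bk{-1}^{a}+\bk{-1}^{b}+\bk{-1}^{c}=0}$. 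This is consistent with your (correct) remark that a single class vanishes more often than the joint system, but it flatly contradicts your claimed form for the individual scalars: if each class really contributed $4\bk{1+\bk{-1}^{a}+\bk{-1}^{b}+\bk{-1}^{c}}$, no class would admit extra vanishing patterns. The third class, built from $M\bk{w_{1},\cdot}$, $M\bk{w_{3},\cdot}$, $M\bk{w_{4},\cdot}$, $M\bk{w_{134},\cdot}$, is harder still, since these operators are isomorphisms but not endomorphisms and do not act by scalars on $\pi_{\dot{S}}$. So your strategy of intersecting the kernels of the three class-sums is the logically correct one, but the combinatorial endgame must be run with the three actual (distinct) scalars; the paper sidesteps this by recording only their total. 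Finally, a small point of logic: simultaneous vanishing of the three class-sums characterizes the complement of $\mathcal{V}^{\bk{E,\chi}}$, not $\mathcal{V}^{\bk{E,\chi}}$ itself.
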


We note here again that, in item \emph{(4)}, $\dot{S}$ is a finite set $S\subset\Places$ together with a choice $S_3=S_{3,\bk{-1,1}} \bigcupdot S_{3,\bk{1,-1}} \bigcupdot S_{3,\bk{-1,-1}}$, where $S_3=S\cap \Places^{\bk{E,\chi}}_{3}$.

\section{Non-Square-integrable Residues}
\label{Sec:NonSquareIntegrableResidues}
We compute the non-square-integrable residues $Res\bk{s_0,\chi,E}$ for the various values of $s_0$ and $\chi$.

For $\chi=\Id$, we compute $Res\bk{s,\chi,E}$ using a Siegel-Weil type identity.
Namely, we prove an identity between $Res\bk{s,\chi,E}$ and a special value, or residue, of an Eisenstein series associated to an induction from a different parabolic subgroup of $H_E$.
The case of $s=\frac{3}{2}$ was essentially computed in \cite{RallisSchiffmannPaper}, we recall the relevant results and compute $Res\bk{\frac{3}{2},\Id,F\times K}$.
For $s=\frac{1}{2}$, we prove an identity between $Res\bk{\frac{1}{2},\Id,F\times K}$ and the special value of the Eisenstein series associated to the degenerate principal series induced from $M_{\set{\alpha_1,\alpha_2}}$.
We then use the fact that the relevant local degenerate principal series is semi-simple in order to compute $Res\bk{\frac{1}{2},\Id,F\times K}$.

For $\chi\neq \Id$ we use \Cref{Cor:Kernel_of_Series_is_kernel_of_CT}, similarly to the square-integrable case.
The non-square integrable case is more involved as \Cref{eq:SquareIntegrableDecomposition} is not valid.
The computation of $Res\bk{\frac{1}{2},\chi,F\times K}$ here, relies on a calculation of the images of certain local intertwining operators; surprisingly, some of this calculations make use of our results about $Res\bk{\frac{1}{2},\Id,F\times K}$.


\subsection{$\chi=\Id$}

\subsubsection{$s_0=\frac{3}{2}$}
In this case, $E=F\times K$ Where $K$ is a Field.
It was essentially dealt with in \cite{RallisSchiffmannPaper}.
We recall the results from there and use them in order to compute $Res\bk{\frac{3}{2},\Id,F\times K}$.
We start with the Siegel-Weil-type identity \cite[Corollary 3.17]{RallisSchiffmannPaper}:
\begin{Prop}
There exists a non-zero constant $C$ such that for every $f\in I_{P_{F\times K}}\bk{s}$ it holds that
\[
\coset{\bk{s-\frac{3}{2}}\Eisen_{F\times K}\bk{f,s,g}} \res{s=\frac{3}{2}} = C \cdot \Eisen_{P_{\set{2,3,4}}}\bk{A\bk{w_{232}}f,1,g} ,
\]
where 
\begin{itemize}
	\item $\Eisen_{P_{\set{2,3,4}}}$ is the degenerate Eisenstein series associated to the normalized parabolic induction $I_{P_{2,3,4}}\bk{s}$ of $\omega_{\alpha_1}^s$ from the maximal parabolic $P_{\set{2,3,4}}$ and
	\item $A\bk{w_{232}}$ is the leading term of $M\bk{w_{232},\lambda}$ at $\lambda_{\frac{3}{2}}$, given by
	\[
	A\bk{w_{232}} = \lim_{s\to\frac{3}{2}} \coset{\bk{s-\frac{3}{2}} M\bk{w_{232},\lambda_{s}}} .
	\]
\end{itemize}

\end{Prop}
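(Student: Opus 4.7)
The plan is to reduce the identity to an equality of constant terms along $B_E$ and then invoke \Cref{Cor:Kernel_of_Series_is_kernel_of_CT}. Since the cuspidal support of both $I_{P_{F\times K}}\bk{s}$ and $I_{P_{\set{2,3,4}}}\bk{s}$ lies along the torus $T_E$, that corollary tells us that an Eisenstein series in this class is determined by its constant term. So it suffices to prove the analogous identity for constant terms, up to a non-zero scalar $C$.

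First, I would apply the constant term formula \Cref{Eq:Constant_term} to each side. For the left-hand side, after multiplication by $\bk{s-\frac{3}{2}}$ and evaluation at $s=\frac{3}{2}$, only the Weyl representatives in $\Sigma^{P_{F\times K}}_{\bk{\Id,\frac{3}{2},1}}$ contribute; this set was identified explicitly in \cite{SegalEisen}. For the right-hand side, the constant term at $s=1$ reads
\[
\suml_{w\in W\bk{M_{\set{2,3,4}},H_E}} M\bk{w,\omega_{\alpha_1}} A\bk{w_{232}} f\bk{g}.
\]

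The core of the proof is the cocycle relation \Cref{eq:Global_Intertwining_Operator_Cocycle_Condition}. Applied to the product $w_{232}\cdot w$ for $w\in W\bk{M_{\set{2,3,4}},H_E}$, it gives
\[
M\bk{w_{232}\cdot w,\lambda_s} = M\bk{w,w_{232}^{-1}\cdot\lambda_s} \circ M\bk{w_{232},\lambda_s},
\]
so after extracting leading terms at $s=\frac{3}{2}$,
\[
M\bk{w,\omega_{\alpha_1}} \circ A\bk{w_{232}} = \lim_{s\to \frac{3}{2}} \bk{s-\frac{3}{2}} M\bk{w_{232}\cdot w,\lambda_s},
\]
provided $M\bk{w,w_{232}^{-1}\cdot\lambda_s}$ is regular at $s=\frac{3}{2}$. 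I expect the map $w\mapsto w_{232}\cdot w$ to establish a bijection between $W\bk{M_{\set{2,3,4}},H_E}$ and $\Sigma^{P_{F\times K}}_{\bk{\Id,\frac{3}{2},1}}$, with matching lengths. Once this bijection is in hand, the two constant term sums match term by term, and the overall proportionality constant $C$ collects the global Gindikin-Karpelevich factors \Cref{Eq: Global Gindikin-Karpelevich} coming from $w_{232}$ itself together with the corresponding $\epsilon$-factors.

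The main obstacle is the combinatorial bookkeeping: checking that the map $w\mapsto w_{232}\cdot w$ really is a length-additive bijection with the correct image, and verifying that every operator $M\bk{w,w_{232}^{-1}\cdot\lambda_s}$ for $w\in W\bk{M_{\set{2,3,4}},H_E}$ is regular at $s=\frac{3}{2}$ so that no additional pole contributions arise. Non-vanishing of $C$ is then easy: applying both sides to the normalized spherical section and using \Cref{Eq:Normalized_intertwining_operator_acting_on_spherical_vector} expresses the ratio as an explicit product of $\zeta$-values at the parameter $\lambda_{3/2}$, which is manifestly in $\C^\times$.
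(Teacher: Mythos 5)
First, note that the paper does not actually prove this proposition: it is quoted verbatim from \cite[Corollary 3.17]{RallisSchiffmannPaper}, so there is no internal argument to measure your proposal against. That said, your overall strategy --- reduce to an identity of constant terms via the orthogonality-to-cusp-forms argument of \Cref{Prop:Equality_of_Eisenstein_series}, expand both sides with \Cref{Eq:Constant_term}, and match terms through the cocycle relation $M\bk{w_{232}w',\lambda_s}=M\bk{w',w_{232}^{-1}\cdot\lambda_s}\circ M\bk{w_{232},\lambda_s}$ --- is the standard route for such Siegel--Weil identities and is consistent with how the paper treats the analogous identity at $s=\frac{1}{2}$.

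The proposal is not yet a proof, however, because its entire content rests on a combinatorial claim that you state only as an expectation: that $w'\mapsto w_{232}w'$ carries $W\bk{M_{\set{2,3,4}},H_E}$ bijectively onto $\Sigma^{P_E}_{\bk{F\times K,\Id,\frac{3}{2},1}}$ (up to elements annihilated on $I_{P_E}$), and that each $M\bk{w',w_{232}^{-1}\cdot\lambda_s}$ is regular at $s=\frac{3}{2}$. Neither is automatic, and the paper's computations at the neighbouring point $s=\frac{1}{2}$ show why a clean term-by-term match cannot be assumed: there, distinct elements of the same $\sim_{\bk{\chi,s_0}}$-class produce poles that cancel one another (e.g.\ $w_{2132}$ against $w_{21323}$), so unmatched or singular terms genuinely occur and must be shown to cancel or vanish. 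Two further points are left unaddressed. The right-hand side is a \emph{value} at $s=1$, so you must establish that $\Eisen_{P_{\set{2,3,4}}}$ is holomorphic there; the class $\set{w_{123},w_{1232}}$ appearing in $\Sigma^{P_{\set{2,3,4}}}_{\bk{\Id,1,0}}\rmod\sim_{\bk{\Id,1}}$ signals precisely the kind of internal cancellation that has to be checked. One must also verify that $A\bk{w_{232}}$ maps $I_{P_{F\times K}}\bk{\frac{3}{2}}$ into the subspace $I_{P_{\set{2,3,4}}}\bk{1}\subset I_{B_E}\bk{w_{232}^{-1}\cdot\lambda_{\frac{3}{2}}}$, without which the right-hand side is not well formed. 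Finally, your account of $C$ is slightly off: the Gindikin--Karpelevich factor of $w_{232}$ is already absorbed into the unnormalized leading term $A\bk{w_{232}}$, so an exact matching would give $C=1$, and ``manifestly in $\C^\times$'' for a product of $\zfun$-values still requires checking that no argument lands on a pole or zero of the completed zeta function. All of this is fixable, but as written the proposal is a plausible outline with the decisive verifications missing.
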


We also recall that $A\bk{w_{232}}=\placestimes A_\nu$ and $A_\nu$ is onto for any place $\nu\in\Places$. 

\begin{Prop}
\[
Res\bk{\frac{3}{2},\Id,F\times K} \cong I_{P_{\set{2,3,4}}}\bk{1} .
\]
\end{Prop}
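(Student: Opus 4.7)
The plan is to use the Siegel-Weil-type identity of the preceding proposition to realize $Res\bk{\frac{3}{2},\Id,F\times K}$ as the image of $I_{P_{\set{2,3,4}}}\bk{1}$ under an Eisenstein map at $s=1$, and then to verify that this Eisenstein map is an isomorphism of $H_E\bk{\A}$-modules.

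The first step is surjectivity, which is essentially free from the preceding results. By definition, $Res\bk{\frac{3}{2},\Id,F\times K}$ is spanned by the functions $\varphi_f\bk{g} = \lim\limits_{s\to \frac{3}{2}} \bk{s-\frac{3}{2}} \Eisen_{F\times K}\bk{f,s,g}$, as $f$ ranges over $I_{P_{F\times K}}\bk{s}$, and the Siegel-Weil identity rewrites $\varphi_f\bk{g}$ as $C\cdot \Eisen_{P_{\set{2,3,4}}}\bk{A\bk{w_{232}}f,1,g}$ with $C\neq 0$. Since $A\bk{w_{232}}=\placestimes A_\nu$ and each local operator $A_\nu$ is surjective, the global operator $A\bk{w_{232}}$ is surjective onto $I_{P_{\set{2,3,4}}}\bk{1}$. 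Hence, as $f$ varies, $A\bk{w_{232}}f$ sweeps out all of $I_{P_{\set{2,3,4}}}\bk{1}$, producing a surjective $H_E\bk{\A}$-equivariant map $\mathcal{E}:I_{P_{\set{2,3,4}}}\bk{1}\twoheadrightarrow Res\bk{\frac{3}{2},\Id,F\times K}$, $h\mapsto \Eisen_{P_{\set{2,3,4}}}\bk{h,1,\cdot}$.

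The second, harder step is injectivity of $\mathcal{E}$. I would first check that $\Eisen_{P_{\set{2,3,4}}}\bk{h,s,g}$ is holomorphic at $s=1$ so that $\mathcal{E}\bk{h}$ is well-defined; then the constant term of $\mathcal{E}\bk{h}$ along $B_E$ is given by $\suml_{w\in W\bk{M_{\set{2,3,4}},H_E}} M\bk{w,\Id,\lambda_s}h$ specialized at $s=1$, by \Cref{Eq:Constant_term}. An argument mirroring the cuspidality-orthogonality reasoning in the proof of \Cref{NonvanishingifCTisNonvanishing} shows that $\mathcal{E}\bk{h}\equiv 0$ precisely when this constant term vanishes identically. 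So injectivity of $\mathcal{E}$ is equivalent to joint injectivity of the intertwining sum on $I_{P_{\set{2,3,4}}}\bk{1}$.

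The main obstacle will be verifying this joint injectivity. The cleanest route, if it works, is to establish irreducibility of $I_{P_{\set{2,3,4}}}\bk{1}$: a local analysis of the degenerate principal series $I_{P_{\set{2,3,4}},\nu}\bk{1}$ at every place, in the spirit of \Cref{Sec_Local_Representations}, combined with the fact that $\mathcal{E}$ is a nonzero intertwining operator (nonzero because $Res\bk{\frac{3}{2},\Id,F\times K}\neq \set{0}$ by the pole analysis of \cite{SegalEisen}), would force it to be injective. Failing that, I would fall back on a direct constant-term calculation as in \Cref{Sec:SquareIntegrableResidues}, partitioning $W\bk{M_{\set{2,3,4}},H_E}$ into equivalence classes under \Cref{Equivalence relation on Sigma-m} and checking that the surviving sum within each class does not vanish on any nonzero section.
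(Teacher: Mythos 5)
Your proposal is correct and, via its fallback, lands on the paper's actual argument; but the emphasis is placed differently, so a comparison is worthwhile. The surjectivity step is identical to the paper's: the Siegel--Weil identity together with the surjectivity of each $A_\nu$ gives the surjection $\mathcal{E}: I_{P_{\set{2,3,4}}}\bk{1}\twoheadrightarrow Res\bk{\frac{3}{2},\Id,F\times K}$. For injectivity, your preferred route --- establishing irreducibility of $I_{P_{\set{2,3,4}}}\bk{1}$ --- is not what the paper does and is risky: the paper never establishes (nor needs) local irreducibility of $I_{P_{\set{2,3,4}},\nu}\bk{1}$, and the presence of the two-element equivalence class $\set{w_{123},w_{1232}}$ in the constant-term data already hints at nontrivial local structure, so you would be taking on a genuinely harder local problem than the statement requires. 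Your fallback is the paper's route, but you have overlooked the observation that makes it a one-liner: by \cite{RallisSchiffmannPaper}, the identity element $w=1$ of $W\bk{M_{\set{2,3,4}},H_E}$ forms a singleton equivalence class under \Cref{Equivalence relation on Sigma-m}, and its contribution to the constant term at $s=1$ is $M\bk{1,\chi_s}f=f$, which has zero kernel. Since the kernel of $\mathcal{E}$ is, by \Cref{Cor:Kernel_of_Series_is_kernel_of_CT}, the intersection over equivalence classes of the kernels of the class sums, a single class with trivial kernel already forces $\ker\bk{\mathcal{E}}=\bk{0}$; there is no need to verify non-vanishing class by class, and in particular no cancellation analysis within $\set{w_{123},w_{1232}}$ is required.
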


\begin{proof}
We recall from the proof of \cite[Proposition 3.5]{RallisSchiffmannPaper} that
\[
\Sigma^{P_{\set{2,3,4}}}_{\bk{\Id,1,0}} \rmod \sim_{\bk{\Id,1}}
= \set{\set{1},\set{w_1},\set{w_{12}},\set{w_{123},w_{1232}},\set{w_{12321}}}.
\]
In particular, the term associated with $w=1$ is not canceled in the constant term of $\Eisen_{P_{\set{2,3,4}}}\bk{A\bk{w_{232}}f,1,g}$.
It follows from \Cref{Cor:Kernel_of_Series_is_kernel_of_CT} that 
\[
ker\bk{\Eisen_{P_{\set{2,3,4}}}\bk{\cdot,1,g}} = \bk{0}
\]
from which the claim follows.
\end{proof}



\subsubsection{$s_0=\frac{1}{2}$}

In this case, we have $E=F\times K$, where $K$ is a quadratic \'etale algebra over $F$.
We approach this using a similar approach to the previous case.
We first establish a Siegel-Weil type identity and apply this identity to calculate $Res\bk{\frac{1}{2},\Id,F\times K}$.

We recall \cite[Prop. 3.9]{RallisSchiffmannPaper}.
Let us define the following normalized spherical Eisenstein series
\begin{equation}
\label{Eq:NormalizedEisensteinSeries}
\Eisen_{B_E}^\sharp\bk{\lambda,g} = 
\coset{\prodl_{\alpha\in\Phi^+} \zfun_{F_\alpha}\bk{\gen{\lambda,\check{\alpha}}+1} l_\alpha^+\bk{\lambda} l_\alpha^-\bk{\lambda}}
\Eisen_{B_E}\bk{f^0_\lambda,\lambda,g} ,
\end{equation}
where
\[
l_\alpha^{\pm}\bk{\lambda} = \gen{\lambda,\check{\alpha}}\pm 1 .
\]

\begin{Prop}
	The normalized Eisenstein series $\Eisen_{B_E}^\sharp\bk{\lambda,g}$ is entire and $W_H$-invariant in the sense that for any $w\in W_H$ it holds that $\Eisen_{B_E}^\sharp\bk{w\cdot \lambda,g}=\Eisen_{B_E}^\sharp\bk{\lambda,g}$.
\end{Prop}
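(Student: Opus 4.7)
The plan is to treat the two assertions separately: the $W$-invariance will be a formal consequence of the Gindikin--Karpelevich formula combined with the functional equations of the Dedekind $\zeta$-functions, whereas the entireness requires a finer analysis of the pole structure of the constant term.

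For $W$-invariance, I would first note the well-known identity $\Eisen_{B_E}(M(w,\lambda) f_\lambda, w^{-1}\lambda, g) = \Eisen_{B_E}(f_\lambda, \lambda, g)$, which, when combined with the spherical Gindikin--Karpelevich formula $M(w,\lambda) f^0_\lambda = J(w,\lambda)\, f^0_{w^{-1}\lambda}$, yields
\[
\Eisen_{B_E}(f^0_\lambda, \lambda, g) = J(w,\lambda)\, \Eisen_{B_E}(f^0_{w^{-1}\lambda}, w^{-1}\lambda, g).
\]
Writing the normalizing factor as $\mathcal{N}(\lambda) = \prod_{\alpha\in\Phi^+} \zeta_{F_\alpha}(\langle\lambda,\check\alpha\rangle+1) l_\alpha^+(\lambda) l_\alpha^-(\lambda)$, the $W$-invariance reduces to verifying the cocycle identity $\mathcal{N}(w^{-1}\lambda) = J(w,\lambda)\, \mathcal{N}(\lambda)$ for all $w\in W$. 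By the cocycle relation on $J(w,\lambda)$ it suffices to check this for simple reflections $w = w_\alpha$. Since $w_\alpha$ permutes $\Phi^+\setminus\{\alpha\}$ and preserves fields of definition, the factors indexed by $\beta\neq\alpha$ contribute the same to both sides; the $\alpha$-factor then reduces, setting $x=\langle\lambda,\check\alpha\rangle$, to
\[
\frac{\zeta_{F_\alpha}(1-x)(1-x)(-1-x)}{\zeta_{F_\alpha}(1+x)(1+x)(x-1)} = \frac{\zeta_{F_\alpha}(1-x)}{\zeta_{F_\alpha}(1+x)} = \frac{\zeta_{F_\alpha}(x)}{\zeta_{F_\alpha}(x+1)} = J(w_\alpha,\lambda),
\]
where I used $\zeta_{F_\alpha}(1-x) = \zeta_{F_\alpha}(x)$ and that $l_\alpha^+ l_\alpha^-$ is even in $x$.

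For entireness, I would exploit the $W$-invariance already established. By \Cref{Eq: Global Gindikin-Karpelevich} and the constant term formula, the only possible hyperplane singularities of $\Eisen_{B_E}(f^0_\lambda,\lambda,g)$ lie on translates of $\langle\lambda,\check\alpha\rangle \in \{0,1\}$ for positive roots $\alpha$. The linear factors $l_\alpha^-(\lambda) = \langle\lambda,\check\alpha\rangle - 1$ in $\mathcal{N}$ are designed precisely to cancel the simple poles at $\langle\lambda,\check\alpha\rangle = 1$ produced by the $\zeta_{F_\alpha}(\langle\lambda,\check\alpha\rangle)$ factors in the Gindikin--Karpelevich numerators; the symmetric factor $l_\alpha^+(\lambda)$ ensures that the poles of $\zeta_{F_\alpha}(\langle\lambda,\check\alpha\rangle+1)$ at $\langle\lambda,\check\alpha\rangle = -1$ are cancelled. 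To dispose of the remaining hyperplanes $\langle\lambda,\check\alpha\rangle = 0$ (which look dangerous because of the pole of $\zeta_{F_\alpha}(1)$ contributed by $\mathcal{N}$), I would apply \Cref{Cor:Kernel_of_Series_is_kernel_of_CT} to reduce the question to the constant term and then invoke the $W$-invariance: since $\Eisen^\sharp_{B_E}(w\cdot\lambda,g) = \Eisen^\sharp_{B_E}(\lambda,g)$, any surviving singularity on $\langle\lambda,\check\alpha\rangle = 0$ would have to be consistent with the $w_\alpha$-invariance, which together with the simplicity of the pole forces it to cancel.

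The main obstacle will be the last step: a direct pole-counting argument shows that $\mathcal{N}(\lambda)$ introduces new poles on $\langle\lambda,\check\alpha\rangle = 0$, and one must show that $\Eisen_{B_E}(f^0_\lambda,\lambda,g)$ vanishes on precisely these hyperplanes. I expect this to be handled by an induction on $\ell(w)$ in the constant term sum $\sum_w J(w,\lambda) f^0_{w^{-1}\lambda}$, pairing each $w$ with $w_\alpha w$ so that the zero/pole contributions match across the pair; the pairing is made possible exactly by the $W$-invariance of $\mathcal{N}(\lambda)\cdot\mathrm{CT}$ verified above. Once the constant term of $\Eisen^\sharp_{B_E}$ is shown to be entire, the entireness of $\Eisen^\sharp_{B_E}$ itself follows from the usual argument that a cuspidal Eisenstein series with vanishing constant term is identically zero.
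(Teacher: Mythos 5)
Your argument is essentially correct, but note that the paper does not prove this proposition at all: it is recalled verbatim from \cite[Prop.~3.9]{RallisSchiffmannPaper}, so there is no in-paper proof to measure you against. What you have written is the standard proof of such statements, and it is the right one. The $W$-invariance part is complete: the cocycle identity $\mathcal{N}(w^{-1}\lambda)=J(w,\lambda)\mathcal{N}(\lambda)$ does reduce to simple reflections, the computation $(1-x)(-1-x)/\bigl[(1+x)(x-1)\bigr]=1$ together with $\zfun_{F_\alpha}(1-x)=\zfun_{F_\alpha}(x)$ is exactly what is needed, and the permutation of $\Phi^+\setminus\{\alpha\}$ by $w_\alpha$ preserves fields of definition because the relative Weyl group commutes with the Galois action.

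Two points in the entireness half deserve tightening. First, your claim that the singularities of the \emph{unnormalized} $\Eisen_{B_E}(f^0_\lambda,\lambda,g)$ lie only on $\gen{\lambda,\check\alpha}\in\{0,1\}$ is not quite right: the Gindikin--Karpelevich factors $J(w,\lambda)$ have $\zfun_{F_\alpha}(\gen{\lambda,\check\alpha}+1)$ in the denominator, whose nontrivial zeros contribute further polar hyperplanes. The correct statement is about the normalized constant term $\mathcal{N}(\lambda)\sum_w J(w,\lambda)f^0_{w^{-1}\lambda}$, in which every summand is $\prod_{\alpha>0,\,w^{-1}\alpha<0}\zfun_{F_\alpha}(\gen{\lambda,\check\alpha})l^+_\alpha l^-_\alpha\cdot\prod_{\alpha>0,\,w^{-1}\alpha>0}\zfun_{F_\alpha}(\gen{\lambda,\check\alpha}+1)l^+_\alpha l^-_\alpha$ times an entire section; all denominators have been cleared, $l^-_\alpha$ and $l^+_\alpha$ kill the poles at $\gen{\lambda,\check\alpha}=1$ and $-1$, and only a simple pole along $\gen{\lambda,\check\alpha}=0$ survives in each summand. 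Second, you do not need the hedged induction on $\ell(w)$: since $w_\alpha$ fixes the hyperplane $\gen{\lambda,\check\alpha}=0$ pointwise and negates the transverse coordinate, the already-established invariance forces the (at most simple) residue along that hyperplane to equal its own negative, hence to vanish; holomorphy at generic points of each hyperplane then gives entireness because the polar set of a meromorphic function cannot have codimension two. With those two repairs your proof is complete and coincides with the argument one would find in the cited reference.
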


In particular, it holds that
\[
\Eisen_{B_E}^\sharp\bk{\widetilde{\lambda_{\bk{-1,2,-1,-1}}},g} = \Eisen_{B_E}^\sharp\bk{\widetilde{\lambda_{\bk{-1,-1,1,1}}},g} .
\]

We evaluate both sides of the equation at \Cref{App:Evaluation_of_Normalized_Eisenstein_Series}.
In particular, evaluating the left-hand side yields
\[
\piece{
2^7 \cdot 3 \cdot \zfun_F\bk{2}^2 \cdot \zfun_F\bk{3} \cdot \zfun_K\bk{2} \cdot R_F^3 \cdot R_K^2 \cdot \Res_{s=\frac{1}{2}}\coset{\Eisen_{E}\bk{f^0,s,g}},& \text{if $K$ is a field,} \\ 
-2^9  \cdot 3 \cdot \zfun_F\bk{2}^4 \cdot \zfun_F\bk{3} \cdot R_F^7 \cdot \Res_{s=\frac{1}{2}}\coset{\Eisen_{E}\bk{f^0,s,g}} ,& \text{if $K=F\times F$,} }
\]
while the right-hand side is given by
\[
\piece{
2^6 \cdot 3 \cdot \zfun_F\bk{2}^2 \cdot \zfun_K\bk{2} \cdot R_F^4\cdot R_K^2 \Eisen_{P_{\set{1,2}}}\bk{f^0,0,g}	,& \text{if $K$ is a field,} \\ 
-2^8 \cdot 3 \cdot \zfun_F\bk{2}^4 \cdot R_F^8 \cdot \Eisen_{P_{\set{1,2}}}\bk{f^0,\bar{0},g}, & \text{if $K=F\times F$.} }
\]
We conclude that for any $K$ it holds that
\[
\Res_{s=\frac{1}{2}}\coset{\Eisen_{E}\bk{f^0,s,g}} 
= \frac{R_F}{2 \cdot \zfun_F\bk{3}} \cdot \Eisen_{P_{\set{1,2}}}\bk{f^0,\bar{0},g} .
\]

We let
\[
A\bk{w_{21}} = \lim_{s\to\frac{1}{2}} \coset{\bk{s-\frac{1}{2}} M\bk{w_{21},\lambda_{s}}} 
\]
and note that
\[
A\bk{w_{21}} f^0_{\lambda_{\frac{1}{2}}} = \frac{R_F}{\zfun_F\bk{3}} f^0_{\lambda_{\bk{-1,-1,1,1}}} .
\]
It follows that
\[
\Res_{s=\frac{1}{2}}\coset{\Eisen_{E}\bk{f^0,s,g}} 
= \frac{1}{2} \cdot \Eisen_{P_{\set{1,2}}}\bk{A\bk{w_{21}}f^0,\bar{0},g} .
\]

On the other hand, by \cite[Proposition 6]{MR2767521}, $\Eisen_{P_{1,2}} \bk{f ,\lambda_{\bk{s_1,s_2}},g}$ is holomorphic at $\lambda_{\bk{0,0}}$ for any section and hence the map $\Eisen_{P_{1,2}} \bk{\widetilde{f}^0,\bar{0},g}$ is $H_E\bk{\A}$-equivariant.

Since $f^0_{\lambda_{\frac{1}{2}}}$ generates $I\bk{\Id,\frac{1}{2}}$ we have the following result.
\begin{Prop}
	For any standard section $f_s\in I\bk{\Id,s}$ it holds that
	\[
	\Res_{s=\frac{1}{2}}\coset{\Eisen_{E}\bk{f,s,g}} 
	= \frac{1}{2} \cdot \Eisen_{P_{\set{1,2}}}\bk{A\bk{21}f,\bar{0},g} .
	\]
\end{Prop}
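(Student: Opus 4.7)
The plan is to deduce the identity for a general standard section from the identity for the spherical section just established by an $H_E\bk{\A}$-equivariance argument. Define
\[
L\bk{f}\bk{g}:=\Res_{s=\frac{1}{2}}\Eisen_E\bk{f,s,g},\qquad R\bk{f}\bk{g}:=\tfrac{1}{2}\Eisen_{P_{\set{1,2}}}\bk{A\bk{w_{21}}f,\bar{0},g}.
\]
First I would observe that both $L$ and $R$ depend only on $f_{\frac{1}{2}}\in I\bk{\Id,\frac{1}{2}}$, and so descend to well-defined linear maps from $I\bk{\Id,\frac{1}{2}}$ into the space of automorphic forms on $H_E\bk{F}\lmod H_E\bk{\A}$: for $L$ this holds because $\Eisen_E\bk{f,s,g}$ has at most a simple pole at $s=\frac{1}{2}$, while for $R$ it holds because $A\bk{w_{21}}$ is the leading Laurent coefficient of $M\bk{w_{21},\chi_s}$ at its simple pole. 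Both descended maps are $H_E\bk{\A}$-equivariant---immediately so for $L$, and for $R$ as the composition of the equivariant intertwining operator $A\bk{w_{21}}$ with the equivariant map $\Eisen_{P_{\set{1,2}}}\bk{\cdot,\bar{0},g}$, which is equivariant since, as just noted, this Eisenstein series is holomorphic at $\bar{0}$.

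The spherical identity proved just above gives $L\bk{f^0}=R\bk{f^0}$ for the spherical vector $f^0=f^0_{\lambda_{1/2}}$. By equivariance, $L$ and $R$ agree on the $H_E\bk{\A}$-submodule of $I\bk{\Id,\frac{1}{2}}$ generated by $f^0$, so the proposition reduces to the claim that $f^0$ generates $I\bk{\Id,\frac{1}{2}}$ as an $H_E\bk{\A}$-module. Since $I\bk{\Id,\frac{1}{2}}$ is a restricted tensor product, this in turn reduces to the local statement that $f^0_\nu$ generates $I_\nu\bk{\Id_\nu,\frac{1}{2}}$ at every place $\nu$.

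The main point of the proof is this local generation claim. Since $E=F\times K$ with $K$ quadratic \'etale, $E_\nu$ is never a field, so item (4) of \Cref{Thm:local_results} provides a unique irreducible quotient $\pi_{1,\nu}$ of $I_\nu\bk{\Id_\nu,\frac{1}{2}}$, which is spherical. Let $W_\nu$ denote the $H_E\bk{F_\nu}$-submodule generated by $f^0_\nu$; since $f^0_\nu$ has non-zero image in $\pi_{1,\nu}$, the projection onto $\pi_{1,\nu}$ restricts to a surjection $W_\nu\twoheadrightarrow \pi_{1,\nu}$. If $W_\nu$ were proper, the non-zero quotient $I_\nu\bk{\Id_\nu,\frac{1}{2}}/W_\nu$ would, by the same uniqueness, again admit $\pi_{1,\nu}$ as an irreducible quotient, forcing $W_\nu$ to lie in the kernel of the unique surjection onto $\pi_{1,\nu}$ and contradicting $W_\nu\twoheadrightarrow \pi_{1,\nu}$. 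Hence $W_\nu=I_\nu\bk{\Id_\nu,\frac{1}{2}}$, $f^0$ generates $I\bk{\Id,\frac{1}{2}}$ globally, and $L=R$ on all of $I\bk{\Id,\frac{1}{2}}$, which is the desired identity.
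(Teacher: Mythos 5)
Your proof is correct and follows essentially the same route as the paper: establish the identity on the spherical vector, observe that both sides define $H_E\bk{\A}$-equivariant maps on $I\bk{\Id,\frac{1}{2}}$ (using the holomorphy of $\Eisen_{P_{\set{1,2}}}$ at $\bar{0}$), and conclude because $f^0_{\lambda_{1/2}}$ generates $I\bk{\Id,\frac{1}{2}}$. The only difference is that you spell out, via the uniqueness of the irreducible spherical quotient from \Cref{Thm:local_results}, the local generation claim that the paper simply asserts.
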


We note that $I_{P_{\set{1,2}}} \bk{\bar{0}}$ is unitary and hence semi-simple.
On the other hand, the maxim semi-simple quotient of $I\bk{\Id,\frac{1}{2}}$ is $\displaystyle\operatorname*{\otimes}_{\nu\in\Places}{} ' \pi_{1,\nu}$.

It follows that
\begin{equation}
Res\bk{\frac{1}{2},\Id,F\times K} = \operatorname*{\otimes}_{\nu\in\Places}{} ' \pi_{1,\nu} .
\end{equation}

\subsection{$\chi\neq\Id$}
Here we assume that $s_0=\frac{1}{2}$, $\chi=\chi_K$ and $E=F\times K$, where $K$ is a quadratic field extension of $F$.

We recall, from \cite{SegalEisen}, the intertwining operators involved in the simple pole of $\Eisen_E\bk{\chi_K, f_s, s, g}_{B_E}$ at $s=\frac{1}{2}$.

\begin{align*}
\Sigma^{P_E}_{\bk{F\times K,\chi_K,\frac{1}{2},2}} = \set{w_{2321}, w_{2132}, w_{21321}, w_{21323}, w_{213213}, w_{2132132}}
\end{align*}
and
\begin{align*}
& w_{2321}^{-1}\cdot\chi_{\frac{1}{2}}\bk{t} = w_{2132132}^{-1}\cdot\chi_{\frac{1}{2}}\bk{t} = \chi_K\bk{t_2} \frac{\FNorm{t_2}_F}{\FNorm{t_1}_F \FNorm{t_3}_K} \\
& w_{2132}^{-1}\cdot\chi_{\frac{1}{2}}\bk{t} = w_{21323}^{-1}\cdot\chi_{\frac{1}{2}}\bk{t} = \chi_K\bk{t_1} \frac{1}{\FNorm{t_2}_F} \\ 
& w_{21321}^{-1}\cdot\chi_{\frac{1}{2}}\bk{t} = w_{213213}^{-1}\cdot\chi_{\frac{1}{2}}\bk{t} = \chi_K\bk{t_1t_2} \frac{1}{\FNorm{t_2}_F} .
\end{align*}

\begin{align*}
& \Sigma^{P_E}_{\bk{F\times K,\chi_K,\frac{1}{2},1}}\setminus \Sigma^{P_E}_{\bk{F\times K,\chi_K,\frac{1}{2},2}} = \set{w_{23}, w_{232}, w_{213}} \\
& w_{23}^{-1}\cdot\chi_{\frac{1}{2}}\bk{t} = \chi_K\bk{t_1t_2} \frac{\FNorm{t_1}_F}{\FNorm{t_3}_K},\quad w_{232}^{-1}\cdot\chi_{\frac{1}{2}}\bk{t} = \chi_K\bk{t_2} \frac{\FNorm{t_1}_F}{\FNorm{t_3}_K},\quad w_{213}^{-1}\cdot\chi_{\frac{1}{2}}\bk{t} = \chi_K\bk{t_1} \frac{\FNorm{t_2}_F}{\FNorm{t_1}_F \FNorm{t_3}_K} .
\end{align*}

Given a standard section $f_s\in I_{P_E}\bk{s}$ it generates a finite dimensional $\mathbf{K}$-representation, where $\mathbf{K}$ is a fixed maximal compact subgroup of $H_E\bk{\A}$ as in \cite[Section I.1.1.]{MR1361168}.
We let $\mathfrak{F}$ denote the finite set of $\mathbf{K}$-types determining the finite-dimensional subspace of $\Ind_{B_E\bk{\A}\cap\mathbf{K}}^{\mathbf{K}} \bk{\chi_s \big\vert_{B_E\bk{\A}\cap\mathbf{K}}}$ generated by $f_s\big\vert_{\mathbf{K}}$.
Note that both $\Ind_{B_E\bk{\A}\cap\mathbf{K}}^{\mathbf{K}} \bk{\chi_s \big\vert_{B_E\bk{\A}\cap\mathbf{K}}}$ and $f=f_s\big\vert_{\mathbf{K}}$ are independent of $s$.
For $w\in W\bk{P_E,H_E}$, we let $M_{\mathfrak{F}}\bk{w,\chi_s}$ be the associated intertwining operator on $\Ind_{B_E\bk{\A}\cap\mathbf{K}}^{\mathbf{K}} \bk{\chi_s \big\vert_{B_E\bk{\A}\cap\mathbf{K}}}$.

We then may write
\begin{align*}
& \lim\limits_{s\to \frac{1}{2}} \bk{s-\frac{1}{2}} \Eisen_E\bk{\chi_K, f_s, s, g}_{\bfCT} \\
& = \bk{w_{23}^{-1}\cdot\chi_s}M_{\mathfrak{F}}\bk{w_{23},\chi_s} f \\
& + \bk{w_{213}^{-1}\cdot\chi_s}M_{\mathfrak{F}}\bk{w_{1},w_{23}^{-1}\cdot\chi_s} M_{\mathfrak{F}}\bk{w_{23},\chi_s} f \\
& + \bk{w_{232}^{-1}\cdot\chi_s}M_{\mathfrak{F}}\bk{w_{2},w_{23}^{-1}\cdot\chi_s} M_{\mathfrak{F}}\bk{w_{23},\chi_s} f \\
& + \coset{Id+\bk{\frac{w_{21323}^{-1}\cdot\chi_s}{w_{2132}^{-1}\cdot\chi_s}} M_{\mathfrak{F}}\bk{w_{3},w_{2132}^{-1}\cdot\chi_s}}
 \bk{w_{2132}^{-1}\cdot\chi_s} M_{\mathfrak{F}}\bk{w_{12},w_{23}^{-1}\cdot\chi_s} M_{\mathfrak{F}}\bk{w_{23},\chi_s} f \\
& + \coset{Id+\bk{\frac{w_{213213}^{-1}\cdot\chi_s}{w_{21321}^{-1}\cdot\chi_s}} M_{\mathfrak{F}}\bk{w_{3},w_{21321}^{-1}\cdot\chi_s}}
 \bk{w_{21321}^{-1}\cdot\chi_s} M_{\mathfrak{F}}\bk{w_{121},w_{23}^{-1}\cdot\chi_s} M_{\mathfrak{F}}\bk{w_{23},\chi_s} f \\
& + \coset{Id+\bk{\frac{w_{2321232}^{-1}\cdot\chi_s}{w_{2321}^{-1}\cdot\chi_s}} M_{\mathfrak{F}}\bk{w_{232},w_{2321}^{-1}\cdot\chi_s}}
 \bk{w_{2321}^{-1}\cdot\chi_s} M_{\mathfrak{F}}\bk{w_{21},w_{23}^{-1}\cdot\chi_s} M_{\mathfrak{F}}\bk{w_{23},\chi_s} f . \\
\end{align*}
In particular, $Res\bk{\frac{1}{2},\Id,F\times K}$ is a quotient of
\[
\coset{\lim\limits_{s\to\frac{1}{2}} \bk{s-\frac{1}{2}} M\bk{w_{23},\chi_s}}  \bk{I_{P_E}\bk{\chi_K,\frac{1}{2}}} .
\]

\begin{Lem}
The local representation $I_{P_E}\bk{\chi_{K_\nu},\frac{1}{2}}$ admits a unique irreducible quotient $\pi_{1,\nu}$ for any $\nu\in\Places$ and
\[
N_\nu\bk{w_{23},\chi_s}  \bk{I_{P_E}\bk{\chi_K,\frac{1}{2}}} = \pi_{1,\nu} .
\]
\end{Lem}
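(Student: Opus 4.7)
The lemma asserts two things: the local representation $I_\nu\bk{\chi_{K_\nu},\tfrac{1}{2}}$ has a unique irreducible quotient $\pi_{1,\nu}$ at every place $\nu$, and this quotient coincides with the image of $N_\nu\bk{w_{23},\chi_s}$. My plan is to deduce the first from the Langlands Quotient Theorem and the second by factoring the Langlands intertwining operator through $N_\nu\bk{w_{23},\chi_s}$, then pinning down the image via a Jacquet module comparison, using the $\chi=\Id$ results established earlier in this section as input.

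For uniqueness, I would apply the Langlands Quotient Theorem directly: $I_\nu\bk{\chi_{K_\nu},\tfrac{1}{2}}$ is a standard module, since the exponent $\lambda_{1/2}$ satisfies $\gen{\lambda_{1/2},\check{\alpha}_2}>0$ for the unique simple root $\alpha_2\in\Delta\setminus\Delta_{M_E}$. Hence it admits a unique irreducible quotient, which we denote $\pi_{1,\nu}$; it is precisely the image of the Langlands intertwining operator $N_\nu\bk{w_0,\chi_s}$. This is compatible with \Cref{Thm:local_results}: at split places of $K$ one has $\chi_{K_\nu}=\Id_\nu$ and $E_\nu = F_\nu\times F_\nu\times F_\nu$, so $\pi_{1,\nu}$ is the spherical quotient described there; at non-split places, $\chi_{K_\nu}\circ\Nm_{K_\nu\rmod F_\nu}=\Id_\nu$ by local class field theory, placing us again in the ``unique irreducible quotient'' branch of \Cref{Thm:local_results}.

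For the identification of the image, I would apply the cocycle relation \Cref{eq:Local_Normalized_Intertwining_Operator_Cocycle_Condition} to write $N_\nu\bk{w_0,\chi_s} = N_\nu\bk{w_0 w_{23}^{-1},w_{23}^{-1}\cdot\chi_s}\circ N_\nu\bk{w_{23},\chi_s}$. Since the composition surjects onto $\pi_{1,\nu}$, the image of $N_\nu\bk{w_{23},\chi_s}$ is a non-zero quotient of $I_\nu\bk{\chi_{K_\nu},\tfrac{1}{2}}$ that further projects onto $\pi_{1,\nu}$; in particular $\pi_{1,\nu}$ is its unique irreducible quotient. To upgrade this surjection to an equality I would realize the image inside $I_{B_E,\nu}\bk{w_{23}^{-1}\cdot\chi_{1/2}}$, compute its Jacquet module along $B_E$, and match it with that of $\pi_{1,\nu}$ read off the local analysis of \Cref{Sec_Local_Representations}. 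At split places of $K$ this reduces to the $\chi=\Id$, $E_\nu = F_\nu\times F_\nu\times F_\nu$ case already handled earlier in this section, where the Siegel--Weil identity with $\Eisen_{P_{\set{\alpha_1,\alpha_2}}}$ yielded $Res\bk{\tfrac{1}{2},\Id,F\times K} = \bigotimes_{\nu\in\Places}\,' \pi_{1,\nu}$.

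The principal obstacle is precisely this last step: upgrading the surjection $N_\nu\bk{w_{23},\chi_s}\bk{I_\nu\bk{\chi_{K_\nu},\tfrac{1}{2}}}\twoheadrightarrow\pi_{1,\nu}$ to an equality, equivalently, showing that $N_\nu\bk{w_0 w_{23}^{-1},w_{23}^{-1}\cdot\chi_s}$ is injective on the image of $N_\nu\bk{w_{23},\chi_s}$. I expect to handle this by a rank-one reducibility analysis along a reduced decomposition of $w_0 w_{23}^{-1}$, invoking the $\epsilon$-factor identities from \cite{MR2882696} to verify that the Gindikin--Karpelevich quotients appearing in \Cref{eq:Normalized_intertwining_operators} neither vanish nor acquire kernels at $s=\tfrac{1}{2}$.
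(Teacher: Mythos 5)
Your skeleton is sound up to the decisive step, and you correctly locate the difficulty: the factorization $N_\nu(w_0,\chi_s)=N_\nu(w_{23}^{-1}w_0,\,w_{23}^{-1}\cdot\chi_s)\circ N_\nu(w_{23},\chi_s)$ (note the complementary element is $w_{23}^{-1}w_0$, not $w_0w_{23}^{-1}$) only shows that the image of $N_\nu(w_{23},\chi_s)$ \emph{surjects onto} $\pi_{1,\nu}$; the entire content of the lemma is that nothing else survives. The tool you propose for closing this gap --- checking that the rank-one Gindikin--Karpelevich quotients of $N_\nu(w_{23}^{-1}w_0,\cdot)$ ``neither vanish nor acquire kernels'' at $s=\tfrac12$ --- cannot work. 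The Gindikin--Karpelevich quotients are scalars normalizing the spherical line and say nothing about injectivity off that line. More fundamentally, since $I_\nu(\chi_{K_\nu},\tfrac12)$ is reducible and $\ker N_\nu(w_0,\chi_s)$ is its non-zero maximal proper submodule, \emph{some} rank-one factor in \emph{any} factorization of $N_\nu(w_0,\chi_s)$ must have a kernel; the only question is whether the killing happens inside $N_\nu(w_{23},\chi_s)$ or afterwards, and a non-vanishing check cannot decide this. Indeed the relevant rank-one operator $N_\nu(w_3,w_2^{-1}\cdot\chi_s)$ (the factor over $K_\nu$) \emph{does} have a kernel at $s=\tfrac12$, and the paper's proof consists precisely in showing that this kernel already swallows the unique irreducible submodule of $I_\nu(\chi_{K_\nu},\tfrac12)$: one counts the multiplicity of the character $\chi_{\frac12}$ in the Jacquet module along $B_E$, which is $2=|\Stab_W(\chi_{\frac12,\nu})|$ in the full principal series and again $2$ in the kernel of $N_\nu(w_3,w_2^{-1}\cdot\chi_s)$, forcing the irreducible submodule into $\ker N_\nu(w_{23},\chi_s)$ and hence the image down to $\pi_{1,\nu}$. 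Your fallback ``Jacquet module comparison'' points in this direction but stops exactly where the work begins; note also that it is a non-Archimedean technique, and the case $F_\nu=\R$, $K_\nu=\C$ must be treated separately (the paper does this by an ATLAS computation, showing the unique irreducible subrepresentation lies in $\ker N_\nu(w_3,w_2^{-1}\cdot\chi_s)$ there as well).

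Two smaller remarks. At places where $K_\nu$ splits your reduction is the right one, but the input is the \emph{global} irreducibility $Res(\tfrac12,\Id,F\times F\times F)=\otimes'\,\pi_{1,\nu}$ together with the fact that $\{w_{23}\}$ and $\{w_{24}\}$ are singleton equivalence classes in $\Sigma^{P_E}_{(F\times F\times F,\Id,\frac12,1)}/\sim$, which forces $N_\nu(\widetilde{w_{23}},\chi_s)(I_\nu)\subseteq\pi_{1,\nu}$ with equality read off the spherical vector; ``reduces to the case already handled'' elides this. Your derivation of the uniqueness of the irreducible quotient via the Langlands quotient theorem applied directly to the standard module $I_\nu(\chi_{K_\nu},\tfrac12)$ is fine and consistent with the paper's Section 4 analysis.
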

It follows that
\[
\coset{\lim\limits_{s\to\frac{1}{2}} \bk{s-\frac{1}{2}} M\bk{w_{23},\chi_s}}  \bk{I_{P_E}\bk{\chi_K,\frac{1}{2}}} = \operatorname*{\otimes}_{\nu\in\Places}{} ' \pi_{1,\nu}
\]
and hence
\begin{equation}
Res\bk{\frac{1}{2},\chi_K,F\times K} \cong \operatorname*{\otimes}_{\nu\in\Places}{} ' \pi_{1,\nu} .
\end{equation}

\begin{proof}[Proof of Lemma]
	First, assume that $K_\nu=F_\nu\times F_\nu$.
	In this case, the claim follows from the computation of $Res\bk{\frac{1}{2},\Id,F\times F\times F}$.
	
	
	The fact that $Res\bk{\frac{1}{2},\Id,F\times F\times F}=\operatorname*{\otimes}_{\nu\in\Places}{} ' \pi_{1,\nu}^{\bk{\Id,F_\nu\times F_\nu\times F_\nu,\frac{1}{2}}}$ is irreducible and that the pole of $\Eisen_{F\times F\times F}\bk{\Id, f_s, s, g}$ at $s=\frac{1}{2}$ implies that for any $\coset{w}\in\Sigma^{P_E}_{\bk{F\times F\times F,\Id,\frac{1}{2},1}}\rmod\sim_{\Id,\frac{1}{2}}$ it holds that
	\[
	\coset{\lim\limits_{s\to\frac{1}{2}} \bk{s-\frac{1}{2}} \suml_{w\in\coset{w}} M\bk{w,\chi_s}} \bk{I\bk{\Id,\frac{1}{2}}} \subseteq \operatorname*{\otimes}_{\nu\in\Places}{} ' \pi_{1,\nu}^{\bk{\Id,F_\nu\times F_\nu\times F_\nu,\frac{1}{2}}} .
	\]
	
	Since $\set{w_{23}},\set{w_{24}}\in \Sigma^{P_E}_{\bk{F\times F\times F,\Id,\frac{1}{2},1}}\rmod\sim_{\Id,\frac{1}{2}}$, it follows that
	\begin{align*}
	& N_\nu\bk{\widetilde{w_{23}},\chi_s}  \bk{I_{P_E}\bk{\chi_K,\frac{1}{2}}} \subseteq \pi_{1,\nu}^{\bk{\Id,F_\nu\times F_\nu\times F_\nu,\frac{1}{2}}}, \\
	& N_\nu\bk{\widetilde{w_{24}},\chi_s}  \bk{I_{P_E}\bk{\chi_K,\frac{1}{2}}} \subseteq \pi_{1,\nu}^{\bk{\Id,F_\nu\times F_\nu\times F_\nu,\frac{1}{2}}}.
	\end{align*}
	Equalities follow since both intertwining operators sends a non-zero spherical vector to a non-zero spherical vector at $s=\frac{1}{2}$.
	
	\mbox{}
		
	Now, assume that $K_\nu$ is a field.
	If $F_\nu=\R$, the claim is proved in \Cref{Appendix:Archimedean}; we assume that $\nu\not\vert\infty$.
	
	The idea, in this case, is similar to that of \cite[Lem. 4.5]{SegalEisen}.
	As $N_\nu\bk{w_{2},\chi_s}$ is an isomorphism at $s=\frac{1}{2}$, it is enough to show that the unique irreducible subrepresentation of $I_\nu\bk{\chi_{K,\nu},\frac{1}{2}}$ is a subquotient of the kernel of $N_\nu\bk{w_{3},w_{2}^{-1}\cdot\chi_s}$ at $s=\frac{1}{2}$.
	
	A straight-forward computation of the Jacquet functor
	$\mathcal{J}_{T_E}^{H_E} \bk{\Ind_{B_E}^{H_E}\chi_\frac{1}{2}}$ shows that any character appearing in it will appear with multiplicity $2$; indeed, it follows immediately from the fact that $\FNorm{\Stab_W\bk{\chi_{\frac{1}{2},\nu}}}=2$.
	
	On the other hand, by Frobenius reciprocity, $\chi_{\frac{1}{2}}$ will appear in the Jacquet functor of the unique irreducible subrepresentation of $I_\nu\bk{\chi_{K,\nu},\frac{1}{2}}$.
	One then checks that the multiplicity of $\chi_{\frac{1}{2}}$ in the kernel of $N_\nu\bk{w_{3},w_{2}^{-1}\cdot\chi_s}$ is $2$ and hence the unique irreducible subrepresentation of $I_\nu\bk{\chi_{K,\nu},\frac{1}{2}}$ lies in the kernel of $N_\nu\bk{w_{3},w_{2}^{-1}\cdot\chi_s}$.
	The claim then follows.

\end{proof}

\subsection{Summary}
We record the results of this section as a theorem.
\begin{Thm}
	\label{Thm:Main_non_Sq_Int}
	The non-square integrable residues $Res\bk{s_0,\chi,E}$ with $\Real\bk{s_0}>0$ are given as follows:
	\begin{enumerate}
		\item If $s_0=\frac{3}{2}$, $\chi=\Id$ and $E=F\times K$, where $K$ is a field, the residue is given by
		\[
		Res\bk{\frac{3}{2},\Id,F\times K} \cong I_{P_{\set{2,3,4}}}\bk{1} .
		\]
		
		\item If $s_0=\frac{1}{2}$, $\chi=\Id$ and $E=F\times K$ then residue is given by
		\[
		Res\bk{\frac{1}{2},\Id,F\times K} \cong \operatorname*{\otimes}_{\nu\in\Places}{} ' \pi_{1,\nu} .
		\]
		
		\item If $s_0=\frac{1}{2}$, $\chi=\chi_{K}$ and $E=F\times K$, where $K$ is a quadratic field extension of $F$, then residue is given by
		\[
		Res\bk{\frac{1}{2},\Id,F\times K} \cong \operatorname*{\otimes}_{\nu\in\Places}{} ' \pi_{1,\nu} .
		\]
	\end{enumerate}
\end{Thm}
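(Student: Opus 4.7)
The plan is to prove the three parts separately, invoking in each case the local structure of $I_\nu(\chi_\nu,\tfrac{1}{2})$ established in \Cref{Thm:local_results} together with a global tool tailored to the pair $(s_0,\chi)$.

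For part (1), I would appeal to the Siegel--Weil-type identity of \cite{RallisSchiffmannPaper} recalled at the start of Section \ref{Sec:NonSquareIntegrableResidues}, which expresses the residue of $\Eisen_{F\times K}(f,s,g)$ at $s=\tfrac{3}{2}$ as a nonzero multiple of $\Eisen_{P_{\{2,3,4\}}}(A(w_{232})f,1,g)$. Surjectivity onto $I_{P_{\{2,3,4\}}}(1)$ is immediate since each local leading term $A_\nu$ is surjective, and injectivity follows from \Cref{Cor:Kernel_of_Series_is_kernel_of_CT}: the singleton class $\{1\}\in \Sigma^{P_{\{2,3,4\}}}_{(\Id,1,0)}/\sim_{(\Id,1)}$ contributes an uncancelled term in the constant term formula, forcing $\ker\bigl(\Eisen_{P_{\{2,3,4\}}}(\cdot,1,g)\bigr)=(0)$.

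For part (2), I would prove a Siegel--Weil-type identity landing on the unitary axis. The tool is the normalized spherical Eisenstein series $\Eisen^\sharp_{B_E}(\lambda,g)$ of \Cref{Eq:NormalizedEisensteinSeries}, which is entire and $W$-invariant. Evaluating at the two $W$-translates $\widetilde{\lambda_{(-1,2,-1,-1)}}$ and $\widetilde{\lambda_{(-1,-1,1,1)}}$ and comparing the normalization constants (a careful bookkeeping with $\zfun_F$, $\zfun_K$, and regulators, carried out in \Cref{App:Evaluation_of_Normalized_Eisenstein_Series}) yields
\[
\Res_{s=\frac{1}{2}} \Eisen_E(f^0,s,g) \;=\; \tfrac{1}{2}\,\Eisen_{P_{\{1,2\}}}\bigl(A(w_{21})f^0,\bar 0,g\bigr).
\]
Because $f^0_{\lambda_{1/2}}$ generates $I(\Id,\tfrac{1}{2})$ as an $H_E(\A)$-module and both sides are $H_E(\A)$-equivariant, the identity extends to all standard sections. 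The crucial observation is that $I_{P_{\{1,2\}}}(\bar 0)$ lies on the unitary axis, hence is unitarizable and therefore semi-simple; consequently $Res(\tfrac{1}{2},\Id,F\times K)$ is a semi-simple quotient of $I(\Id,\tfrac{1}{2})$, and a comparison with the maximal semi-simple quotient computed in \Cref{Thm:local_results} forces $Res(\tfrac{1}{2},\Id,F\times K) \cong \bigotimes_{\nu\in\Places}\,' \pi_{1,\nu}$.

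For part (3), the approach is more direct. In the constant-term expansion of $\lim_{s\to 1/2}(s-\tfrac{1}{2})\Eisen_E(\chi_K,f_s,s,g)_{\bfCT}$, every surviving intertwining operator factors through $M(w_{23},\chi_s)$, so by \Cref{Cor:Kernel_of_Series_is_kernel_of_CT} the residue is a quotient of $\lim_{s\to 1/2}(s-\tfrac{1}{2})M(w_{23},\chi_s)\bigl(I(\chi_K,\tfrac{1}{2})\bigr)$. It therefore suffices to establish the local identification $N_\nu(w_{23},\chi_s)\bigl(I_\nu(\chi_{K,\nu},\tfrac{1}{2})\bigr)=\pi_{1,\nu}$ at every place $\nu$. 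For split $K_\nu$ this is encoded in the computation of $Res(\tfrac{1}{2},\Id,F\times F\times F)$ from Section \ref{Sec:SquareIntegrableResidues}, since $\{w_{23}\}$ and $\{w_{24}\}$ are each singleton classes mod $\sim_{(\Id,1/2)}$. For non-split $K_\nu$ at a finite place, a Jacquet module computation in the spirit of \cite[Lem.~4.5]{SegalEisen} shows that the unique irreducible subrepresentation of $I_\nu(\chi_{K,\nu},\tfrac{1}{2})$ lies in the kernel of $N_\nu(w_3, w_2^{-1}\cdot\chi_s)$, giving the required image after composing with the isomorphism $N_\nu(w_2,\cdot)$. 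The Archimedean case is handled in \Cref{Appendix:Archimedean}.

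The main obstacle is part (2): proving the Siegel--Weil identity on the unitary axis requires carefully tracking many normalization factors through two separate evaluations of $\Eisen^\sharp_{B_E}$, and the cases $K$ a field and $K$ split must be treated separately since the $\zeta$-factors that survive are genuinely different. A secondary delicate point in part (3) is verifying that the local image of $N_\nu(w_{23},\cdot)$ exhausts the full irreducible quotient $\pi_{1,\nu}$, rather than a proper subquotient of it; this is precisely where one must invoke the explicit square-integrable data from Section \ref{Sec:SquareIntegrableResidues}.
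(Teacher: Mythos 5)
Your proposal is correct and follows essentially the same route as the paper: part (1) via the Rallis--Schiffmann Siegel--Weil identity plus the non-cancellation of the class $\set{1}$ in the constant term, part (2) via the $W$-invariance of $\Eisen^\sharp_{B_E}$ evaluated at the two translates together with semi-simplicity of $I_{P_{\set{1,2}}}\bk{\bar{0}}$ on the unitary axis, and part (3) via factoring the residual constant term through $M\bk{w_{23},\chi_s}$ and the local lemma identifying its image with $\pi_{1,\nu}$ (split places from the $F\times F\times F$ computation, non-split non-Archimedean places by the Jacquet-module argument, Archimedean places by the appendix). No gaps to report.
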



\bibliographystyle{alpha}
\bibliography{bib}

\begin{thebibliography}{AvLEJ}

\bibitem[Ada08]{MR2454331}
Jeffrey Adams.
\newblock Guide to the {A}tlas software: computational representation theory of
  real reductive groups.
\newblock In {\em Representation theory of real reductive {L}ie groups}, volume
  472 of {\em Contemp. Math.}, pages 1--37. Amer. Math. Soc., Providence, RI,
  2008.

\bibitem[AdC09]{MR2485793}
Jeffrey Adams and Fokko du~Cloux.
\newblock Algorithms for representation theory of real reductive groups.
\newblock {\em J. Inst. Math. Jussieu}, 8(2):209--259, 2009.

\bibitem[AGPS]{AdamsGanPaulSavin-D4Real}
Jeffrey Adams, Wee~Teck Gan, Annegret Paul, and Gordan Savin.
\newblock {\em Unpublished}.

\bibitem[AvLEJ]{UnitaryRepresentationofRealReductiveGroups}
Jeffrey~D. Adams, Marc van Leeuwen, Trapa~Peter E., and Vogan~David Jr.
\newblock Unitary representations of real reductive groups.
\newblock http://www.liegroups.org/papers/hermitianForms.pdf.

\bibitem[BJ08]{MR2490651}
Dubravka Ban and Chris Jantzen.
\newblock Jacquet modules and the {L}anglands classification.
\newblock {\em Michigan Math. J.}, 56(3):637--653, 2008.

\bibitem[BT84]{MR756316}
F.~Bruhat and J.~Tits.
\newblock Groupes r\'eductifs sur un corps local. {II}. {S}ch\'emas en groupes.
  {E}xistence d'une donn\'ee radicielle valu\'ee.
\newblock {\em Inst. Hautes \'Etudes Sci. Publ. Math.}, (60):197--376, 1984.

\bibitem[BW00]{MR1721403}
A.~Borel and N.~Wallach.
\newblock {\em Continuous cohomology, discrete subgroups, and representations
  of reductive groups}, volume~67 of {\em Mathematical Surveys and Monographs}.
\newblock American Mathematical Society, Providence, RI, second edition, 2000.

\bibitem[dCvL]{ATLAS}
F.~du~Cloux and M.~van Leeuwen.
\newblock Software for structure and representations of real reductive groups.

\bibitem[GGJ02]{MR1918673}
Wee~Teck Gan, Nadya Gurevich, and Dihua Jiang.
\newblock Cubic unipotent {A}rthur parameters and multiplicities of square
  integrable automorphic forms.
\newblock {\em Invent. Math.}, 149(2):225--265, 2002.

\bibitem[GH06]{MR2268487}
Wee~Teck Gan and Joseph Hundley.
\newblock The spin {$L$}-function of quasi-split {$D_4$}.
\newblock {\em IMRP Int. Math. Res. Pap.}, pages Art. ID 68213, 74, 2006.

\bibitem[GK81]{MR620252}
S.~S. Gelbart and A.~W. Knapp.
\newblock Irreducible constituents of principal series of {${\rm SL}_{n}(k)$}.
\newblock {\em Duke Math. J.}, 48(2):313--326, 1981.

\bibitem[GRS97]{MR1469105}
David Ginzburg, Stephen Rallis, and David Soudry.
\newblock On the automorphic theta representation for simply laced groups.
\newblock {\em Israel J. Math.}, 100:61--116, 1997.

\bibitem[GSa]{Gan-Savin-D4}
Wee~Teck Gan and Gordan Savin.
\newblock A family of {A}rthur packets of triality {Spin(8)}.
\newblock {\em Unpublished}.

\bibitem[GSb]{RallisSchiffmannPaper}
Nadya Gurevich and Avner Segal.
\newblock Poles of the standard {$\mathcal{L}$}-function of {$G_2$} and the
  {R}allis-{S}chiffmann lift.
\newblock {\em Preprint}.

\bibitem[GS15]{MR3284482}
Nadya Gurevich and Avner Segal.
\newblock The {R}ankin-{S}elberg integral with a non-unique model for the
  standard {$\mathcal{L}$}-function of {$G_2$}.
\newblock {\em J. Inst. Math. Jussieu}, 14(1):149--184, 2015.

\bibitem[Han15]{MR3437492}
Marcela Hanzer.
\newblock Degenerate {E}isenstein series for symplectic groups.
\newblock {\em Glas. Mat. Ser. III}, 50(70)(2):289--332, 2015.

\bibitem[Han18]{MR3742780}
Marcela Hanzer.
\newblock Non-{S}iegel {E}isenstein series for symplectic groups.
\newblock {\em Manuscripta Math.}, 155(1-2):229--302, 2018.

\bibitem[Key82]{MR675406}
Charles~David Keys.
\newblock On the decomposition of reducible principal series representations of
  {$p$}-adic {C}hevalley groups.
\newblock {\em Pacific J. Math.}, 101(2):351--388, 1982.

\bibitem[Kim01]{MR1847140}
Henry~H. Kim.
\newblock Residual spectrum of split classical groups; contribution from
  {B}orel subgroups.
\newblock {\em Pacific J. Math.}, 199(2):417--445, 2001.

\bibitem[KS80]{MR582703}
A.~W. Knapp and E.~M. Stein.
\newblock Intertwining operators for semisimple groups. {II}.
\newblock {\em Invent. Math.}, 60(1):9--84, 1980.

\bibitem[KS88]{MR944102}
C.~David Keys and Freydoon Shahidi.
\newblock Artin {$L$}-functions and normalization of intertwining operators.
\newblock {\em Ann. Sci. \'Ecole Norm. Sup. (4)}, 21(1):67--89, 1988.

\bibitem[Lao16]{LaoResidualSpectrum}
Jing~Feng Lao.
\newblock Residual spectrum of quasi-split {$Spin(8)$} defined by a cubic
  extension.
\newblock {\em Preprint}, 2016.

\bibitem[Lap11]{MR2767521}
Erez Lapid.
\newblock On {A}rthur's asymptotic inner product formula of truncated
  {E}isenstein series.
\newblock In {\em On certain {$L$}-functions}, volume~13 of {\em Clay Math.
  Proc.}, pages 309--331. Amer. Math. Soc., Providence, RI, 2011.

\bibitem[Mg94]{MR1300285}
Colette M\oe~glin.
\newblock Repr\'esentations unipotentes et formes automorphes de carr\'e
  int\'egrable.
\newblock {\em Forum Math.}, 6(6):651--744, 1994.

\bibitem[MW95]{MR1361168}
C.~M{\oe}glin and J.-L. Waldspurger.
\newblock {\em Spectral decomposition and {E}isenstein series}, volume 113 of
  {\em Cambridge Tracts in Mathematics}.
\newblock Cambridge University Press, Cambridge, 1995.
\newblock Une paraphrase de l'{\'E}criture [A paraphrase of Scripture].

\bibitem[NSS]{SegalSingularities}
Taeuk Nam, Avner Segal, and Lior Silberman.
\newblock Singularities of intertwining operators and certain decompositions of
  principal series representations.
\newblock {\em unpublished}.

\bibitem[Roh11]{MR2882696}
David~E. Rohrlich.
\newblock Root numbers.
\newblock In {\em Arithmetic of {$L$}-functions}, volume~18 of {\em IAS/Park
  City Math. Ser.}, pages 353--448. Amer. Math. Soc., Providence, RI, 2011.

\bibitem[Sah95]{MR1329899}
Siddhartha Sahi.
\newblock Jordan algebras and degenerate principal series.
\newblock {\em J. Reine Angew. Math.}, 462:1--18, 1995.

\bibitem[Seg]{SegalEisen}
A.~Segal.
\newblock The degenerate {E}isenstein series attached to the {H}eisenberg
  parabolic subgroups of quasi-split forms of {$D_4$}.
\newblock {\em To appear in Tansaction of the American Mathematical Society}.

\bibitem[Seg17]{MR3658191}
Avner Segal.
\newblock A family of new-way integrals for the standard {$\mathcal
  L$}-function of cuspidal representations of the exceptional group of type
  {$G_2$}.
\newblock {\em Int. Math. Res. Not. IMRN}, (7):2014--2099, 2017.

\bibitem[Tad92]{MR1141803}
Marko Tadi\'c.
\newblock Notes on representations of non-{A}rchimedean {${\rm SL}(n)$}.
\newblock {\em Pacific J. Math.}, 152(2):375--396, 1992.

\bibitem[Win78]{MR517138}
Norman Winarsky.
\newblock Reducibility of principal series representations of {$p$}-adic
  {C}hevalley groups.
\newblock {\em Amer. J. Math.}, 100(5):941--956, 1978.

\end{thebibliography}

\vfill
\pagebreak

\appendix

\section{Structure of Local Degenerate Principal Series at Archimedean Places}
\label{Appendix:Archimedean}
All the results in this section were obtained using the \textit{atlas of lie groups} \cite{ATLAS}.
We consdier the cases where $s_0=\frac{1}{2}$ since the cases where $s_0=\frac{3}{2}$ and $s_0=\frac{5}{2}$ were dealt with in \Cref{Sec_Local_Representations}.
We also wish to remind the reader that $\R$ has no cubic field extensions and $\C$ has no proper field extension and that the only finite-order characters of $\R^\times$ are $\Id$ and $\sgn$ while the only finite-order character of $\C^\times$ is $\Id$.
\begin{Prop}
\begin{enumerate}
\item Assume $F_\nu=\R$ and $E_\nu=\R\times \R\times \R$.
\begin{itemize}
\item $I_\nu\bk{\Id_\nu,\frac{1}{2}}$ has length $5$ with a unique irreducible quotient, which is spherical.
\item $I_\nu\bk{\sgn_\nu,\frac{1}{2}}$ has length $5$ with a unique irreducible subrepresentation; the maximal semisimple quotient is a direct sum of four irreducible representations.
\end{itemize}
\item Assume $F_\nu=\R$ and $E_\nu=\R\times \C$.
Both $I_\nu\bk{\Id_\nu,\frac{1}{2}}$ and $I_\nu\bk{\sgn_\nu,\frac{1}{2}}$ has length $2$ with a unique irreducible subrepresentation and a unique irreducible quotient.
\item Assume $F_\nu=\C$ and $E_\nu=\C\times \C\times \C$. $I_\nu\bk{\Id_\nu,\frac{1}{2}}$ is irreducible.
\end{enumerate}
\end{Prop}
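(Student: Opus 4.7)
The plan is to reduce each case to a finite computation of composition series and socle/cosocle structure of a parabolically induced representation of a specific real form of $Spin_8$, and then to run this computation using the \emph{atlas of lie groups} software package, as advertised at the top of the appendix. In each case, the real form of $H_E$ corresponding to $E_\nu$ is determined by the $\Gal(\overline{F_\nu}/F_\nu)$-action on the Dynkin diagram of $D_4$: $E_\nu = \R\times\R\times\R$ gives the split form of $Spin_8(\R)$, $E_\nu = \R\times\C$ gives the quasi-split (non-split) form of $Spin_8(\R)$ whose outer involution fixes the two "legs" swapped by complex conjugation, and $E_\nu = \C\times\C\times\C$ gives $Spin_8(\C)$ regarded as a real group. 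The character $\chi_\nu$ is then translated into an infinitesimal character and a central character on the Heisenberg Levi $M_E(F_\nu)$, and $I_\nu(\chi_\nu,\tfrac12)$ is realized inside the ATLAS parameter space.

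First I would compute in each case the Langlands parameter of $I_\nu(\chi_\nu,\tfrac12)$, i.e.\ the minimal principal series of which it is a subquotient, and feed it into ATLAS. Using the commands that produce the composition series of a standard module together with the Kazhdan-Lusztig-Vogan polynomials, I would read off the length of $I_\nu(\chi_\nu,\tfrac12)$ and the multiplicities of its irreducible constituents. For the split real form with $\chi_\nu=\Id_\nu$, this produces a list of five constituents; by comparing infinitesimal characters and $K$-types with the spherical principal series, exactly one of them is spherical, and I would identify it with the Langlands quotient. For $\chi_\nu=\sgn_\nu$ in the split case, the same machinery yields again five constituents, and the socle/cosocle of the standard module is read off from the embedding/projection data that ATLAS produces.

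For case (2), the length-two assertion amounts to showing that the induced module has exactly two composition factors and that neither the socle nor the cosocle is a proper direct sum; both facts are produced directly by the ATLAS composition-series output on the quasi-split real form of $Spin_8$ associated to the algebra $\R\times\C$. For case (3), where the group is $Spin_8(\C)$, the degenerate principal series at $s=\tfrac12$ is (after taking real points) a principal series of a complex group, and the classical criterion for irreducibility of complex principal series (reducibility only when the infinitesimal character fails to be regular in a specific way relative to the inducing parabolic) can be checked directly; alternatively, one runs ATLAS with the complex group input and reads irreducibility off the output.

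The main obstacle I anticipate is matching the internal ATLAS conventions (choice of based root datum, Cartan involution, real form, $\rho$-shift, and parameterization of characters of $M_E$) with the conventions fixed in Section 3 of this paper for $H_E$, its Chevalley-Steinberg pinning, the Heisenberg parabolic $P_E$, and the normalization of $I_\nu(\chi_\nu,s)$. To control this, I would first run the identity check on a case where the answer is already known from Section 4, namely $I_\nu(\chi_{E,\nu},\tfrac32)$ for the minimal representation or $I_\nu(\Id_\nu,\tfrac52)$ for the trivial representation, and verify that the ATLAS output reproduces the correct unique irreducible quotient. Once this dictionary is fixed, the remaining length and socle/cosocle statements in (1)--(3) follow from the ATLAS tables, and the distinguished spherical constituent is always identified as the one whose lowest $K$-type is trivial.
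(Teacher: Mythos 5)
Your overall strategy -- realize each $I_\nu(\chi_\nu,\tfrac12)$ as an induced module in ATLAS, read off its Jordan--H\"older constituents, and fix the dictionary of conventions by calibrating against a known case -- is exactly the route the paper takes, and the length statements in all three cases, as well as the irreducibility in case (3), do come straight out of \texttt{real\_induce\_irreducible}. The gap is in your claim that the socle and cosocle structure is ``produced directly by the ATLAS composition-series output'' or by ``embedding/projection data that ATLAS produces.'' ATLAS (version 1.0.1, as used here) returns the multiset of irreducible constituents with multiplicities; it does not return the submodule lattice, and in particular it cannot distinguish a nonsplit extension from a direct sum. This is precisely the delicate point in case (2) with $\chi_\nu=\Id_\nu$: the module has length $2$ with constituents $\pi_{1,\nu}$ and $\pi_{-2,\nu}$, both of which occur as irreducible quotients of the ambient principal series $\Ind_{B}^{H}\lambda_{(1,-1,1,1)}$, so nothing in the composition series rules out $I_\nu(\Id_\nu,\tfrac12)\cong\pi_{1,\nu}\oplus\pi_{-2,\nu}$. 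The paper has to prove that the exact sequence $0\to\pi_{-2,\nu}\to I_\nu(\Id_\nu,\tfrac12)\to\pi_{1,\nu}\to 0$ does not split by a separate, non-ATLAS argument: using the $K$-type multiplicities of $\pi_{1}$ and $\pi_{-2}$ (the types $V_{(0,0)}\boxtimes V_n$ alternate between the two constituents according to the parity of $n$), one exhibits an element of $U(\mathfrak{h}_\C)$ of the form $X_+^2$ whose matrix coefficient between the spherical $K$-type and an even $K$-type is a quadratic polynomial $f(s)$; since $f$ must vanish at $s=-\tfrac12$ and $s=-\tfrac52$, it cannot vanish at $s=\tfrac12$, so the spherical vector generates the whole module and the extension is nonsplit. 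Without this (or some substitute, e.g.\ an Ext computation), your proof of the ``unique irreducible subrepresentation and unique irreducible quotient'' clause in case (2) is incomplete.

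A similar, though milder, issue affects case (1). For $\chi_\nu=\Id_\nu$ the uniqueness of the irreducible quotient among the five constituents is not read off from the composition series alone: the paper argues that any irreducible quotient of $I_\nu(\Id_\nu,\tfrac12)$ must be one of the two irreducible quotients $\pi_1,\pi_{-2}$ of the larger principal series, and then checks in ATLAS that $\pi_{-2}$ does not even occur in the Jordan--H\"older series of $I_\nu(\Id_\nu,\tfrac12)$. For $\chi_\nu=\sgn_\nu$ the identification of the length-four maximal semisimple quotient rests on the Langlands-quotient analysis of Section 4 (each of the four tempered summands of the Levi induction contributes a unique Langlands quotient), with ATLAS supplying only the multiplicity-one verification. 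If you supplement your ATLAS computations with these structural inputs -- in particular the nonsplitting argument in case (2) -- your proof is complete and coincides with the paper's.
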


In what follows, we supply the output from the ATLAS software regarding the above mentioned representations.
It should be noted before hand that ATLAS thinks of a irreducible representations of a real reductive Lie group $G$ in terms of the Langlands parametrization, i.e. triples $\bk{x,\lambda,\nu}$, called \emph{parameters}, where:
\begin{itemize}
\item $x$ is an element in $K\lmod G\rmod B$, where $K$ is a fixed maximal compact subgroup and $B$ is a fixed Borel subgroup.
The element $x$ gives rise to a Borel subgroup $B_x$, a Cartan subgroup $H_x$ and an involution $\theta_x$ of $G$.
\item $\lambda\in X^\ast\rmod\bk{1-\theta_x}X^\ast$, where $X^\ast$ is the lattice of algebraic characters of $G_x$.
This is a character on the maximal compact subgroup of $H_x$.
When $H_x$ is split, it can be identified with the group of connected components of $H_x$.
\item $\nu\in \bk{X^\ast}^{-\theta_x}$.
\item The parameter $p=\bk{x,\lambda,\nu}$ corresponds to the Langlands quotient of $J\bk{p}=\Ind_{B_x}^H\bk{\lambda\otimes\nu}$.
\item The infinitesimal character of $J\bk{p}$ is $\gamma=\frac{1+\theta_x}{2}\lambda+\frac{1-\theta_x}{2}\nu=\frac{1+\theta_x}{2}\lambda+\nu$.
\end{itemize}
For more details the reader may consult \cite{UnitaryRepresentationofRealReductiveGroups} (in particular Proposition 4.3 there), \cite{MR2485793} and \cite{MR2454331}.
The reader is also advised to consider the documentation in \url{http://www.liegroups.org/}.
The calculations bellow were performed on version 1.0.1 of ATLAS.


\subsection{$F_\nu=\R$, $E_\nu=\R\times \R\times \R$ $s=\frac{1}{2}$ and $\chi_\nu=\Id_\nu$}
We start by defining the group $\mathtt{H}=H_E=Spin\bk{4,4}$ and the subgroups $\mathtt{B}=B_E$, $\mathtt{T}=T_E$, $\mathtt{P}=P_E$ and $\mathtt{M}=M_E$:
\begin{verbatim}
atlas> set H=Spin(4,4)
Variable H: RealForm
atlas> #KGB(H)
Value: 109
atlas> set x=KGB(H,108)
Variable x: KGBElt
atlas> set P=Parabolic :([0,2,3],x)
Variable P: ([int],KGBElt)
atlas> set M=Levi(P)
Variable M: RealForm
atlas> M
Value: disconnected split real group with Lie algebra 
      sl(2,R).sl(2,R).sl(2,R).gl(1,R)'
atlas> set B=Parabolic :([],x)
Variable B: ([int],KGBElt)
atlas> set T=Levi(B)
Variable T: RealForm
\end{verbatim}

Then, we consider $I_\nu\bk{\Id_\nu,\frac{1}{2}}$ as a quotient of $\Ind_{B_E}^{H_E}\eta_{\frac{1}{2}}$.
First, we define $\eta_{\frac{1}{2}}=\bk{1,-1,1,1}$ and consider the induction $\Ind_{B_E}^{M_E}\eta_{\frac{1}{2}}$ and then we pick up the unique irreducible quotient of $\Ind_{B_E}^{M_E}\eta_{\frac{1}{2}}$; this is the one-dimensional representation $\FNorm{{det}_{M_E}}^{\frac{1}{2}}$ of $M_E$.
\begin{verbatim}
atlas> set z=KGB(T,0)
Variable z: KGBElt
atlas> set u=vec:[1,-1,1,1]
Variable u: vec
atlas> set p=parameter(z,null(rank(H)),u)
Variable p: Param
atlas> real_induce_standard (p,M)
Value: final parameter (x=13,lambda=[2,-3,2,2]/2,nu=[1,-1,1,1]/1)
atlas> dimension(real_induce_standard (p,M))
Value: 1
atlas> set q=real_induce_standard (p,M)
Variable q: Param
\end{verbatim}

We then consider the induced representation $I_\nu\bk{\Id_\nu,\frac{1}{2}}$:
\begin{verbatim}
atlas> real_induce_irreducible(q,H)
Value: 
1*final parameter (x=16,lambda=[0,1,0,0]/1,nu=[-1,2,-1,-1]/2)
1*final parameter (x=17,lambda=[0,1,0,0]/1,nu=[-1,2,-1,-1]/2)
1*final parameter (x=18,lambda=[0,1,0,0]/1,nu=[-1,2,-1,-1]/2)
1*final parameter (x=19,lambda=[0,1,0,0]/1,nu=[-1,2,-1,-1]/2)
1*final parameter (x=108,lambda=[1,1,1,1]/1,nu=[0,1,0,0]/1)
\end{verbatim}
Indeed we see that it has length $5$ and we turn to show that the last parameter represents the unique irreducible quotient of $I_\nu\bk{\Id_\nu,\frac{1}{2}}$. One can further show that the direct sum of the other representations is the maximal semi-simple subrepresentation of $I_\nu\bk{\Id_\nu,\frac{1}{2}}$.

By \Cref{Thm:local_results}, $\Ind_B^H \lambda_{\bk{1,-1,1,1}}$ admits two irreducible quotients.
Since $I_\nu\bk{\Id_\nu,\frac{1}{2}}$ is a quotient of $\Ind_B^H \lambda_{\bk{1,-1,1,1}}$, any irreducible quotient of $I_\nu\bk{\Id_\nu,\frac{1}{2}}$ is an irreducible quotient of $\Ind_B^H \lambda_{\bk{1,-1,1,1}}$.
We now compute the parameters of the two irreducible quotients of $\Ind_B^H \lambda_{\bk{1,-1,1,1}}$ and see that only one appears in the Jordan-H\"older series of $I_\nu\bk{\Id_\nu,\frac{1}{2}}$.
\begin{verbatim}
atlas> set Q=Parabolic :([1],x)
Variable Q: ([int],KGBElt)
atlas> set L=Levi(Q)
Variable L: RealForm
atlas> void: for q in monomials(real_induce_irreducible(p,L)) 
      do prints(q," ",is_finite_dimensional (q)) od
final parameter (x=0,lambda=[-1,2,-1,-1]/2,nu=[1,0,1,1]/2) false
final parameter (x=1,lambda=[-1,2,-1,-1]/2,nu=[0,1,0,0]/1) true
atlas> set par0=monomials(real_induce_irreducible(p,L))
Variable par0: [Param]
atlas> real_induce_standard (par0[0],H)
Value: non-normal parameter (x=106,lambda=[1,1,1,1]/1,nu=[1,0,1,1]/2)
atlas> finalize($)
Value: [final parameter (x=65,lambda=[-1,4,-1,-1]/1,nu=[-1,3,-1,-1]/2)]
atlas> real_induce_standard (par0[1],H)
Value: final parameter (x=108,lambda=[1,1,1,1]/1,nu=[0,1,0,0]/1)
\end{verbatim}
Namely, $\pi_{-2}$ is not a constituent of $I_\nu\bk{\Id_\nu,\frac{1}{2}}$.
It follows that $\pi_1$ is the unique irreducible quotient of $I_\nu\bk{\Id_\nu,\frac{1}{2}}$.

\subsection{$F_\nu=\R$, $E_\nu=\R\times \R\times \R$ $s=\frac{1}{2}$ and $\chi_\nu=\sgn_\nu$}


This case was proven in \Cref{Sec_Local_Representations}.
For the benefit of the reader we include the relevant calculation in ATLAS.

We set $\mathtt{H}$, $\mathtt{B}$, $\mathtt{T}$, $\mathtt{P}$ and $\mathtt{M}$ as in the previous case.
Then, we consider $I_\nu\bk{\sgn,\frac{1}{2}}$ as a quotient of $\Ind_{B_E}^{H_E}\mu_{\sgn}\otimes\eta_{\frac{1}{2}}$.
First, we define $\eta_{\frac{1}{2}}=\bk{1,-1,1,1}$ and consider the induction $\Ind_{B_E}^{M_E}\eta_{\frac{1}{2}}$ and then we pick up the unique irreducible quotient of $\Ind_{B_E}^{M_E}\eta_{\frac{1}{2}}$; this is the one-dimensional representation $\FNorm{{det}_{M_E}}^{\frac{1}{2}}$ of $M_E$.

\begin{verbatim}
atlas> set z=KGB(T,0)
Variable z: KGBElt
atlas> set u=vec:[1,-1,1,1]
Variable u: vec
atlas> set ud=dominant(H,u)
Variable ud: vec
atlas> ud
Value: [ 0, 1, 0, 0 ]
atlas> set psgn=parameter(z,ud,u)
Variable psgn: Param
atlas> set par0=monomials(real_induce_irreducible(psgn,M))
Variable par0: [Param]
atlas>void:for q in par0 do prints(q," ",is_finite_dimensional (q)) od
final parameter (x=0,lambda=[2,-3,2,2]/2,nu=[0,1,0,0]/2) false
final parameter (x=1,lambda=[2,-3,2,2]/2,nu=[0,1,0,0]/2) false
final parameter (x=2,lambda=[2,-3,2,2]/2,nu=[0,1,0,0]/2) false
final parameter (x=3,lambda=[2,-3,2,2]/2,nu=[0,1,0,0]/2) false
final parameter (x=4,lambda=[2,-3,2,2]/2,nu=[0,0,0,1]/1) false
final parameter (x=5,lambda=[2,-3,2,2]/2,nu=[0,0,0,1]/1) false
final parameter (x=6,lambda=[2,-3,2,2]/2,nu=[0,0,1,0]/1) false
final parameter (x=7,lambda=[2,-3,2,2]/2,nu=[0,0,1,0]/1) false
final parameter (x=8,lambda=[2,-3,2,2]/2,nu=[1,0,0,0]/1) false
final parameter (x=9,lambda=[2,-3,2,2]/2,nu=[1,0,0,0]/1) false
final parameter (x=10,lambda=[2,-3,2,2]/2,nu=[0,-1,2,2]/2) false
final parameter (x=11,lambda=[2,-3,2,2]/2,nu=[2,-1,0,2]/2) false
final parameter (x=12,lambda=[2,-3,2,2]/2,nu=[2,-1,2,0]/2) false
final parameter (x=13,lambda=[2,-1,2,2]/2,nu=[1,-1,1,1]/1) true
atlas> set q=par0[13]
Variable q: Param
\end{verbatim}
Another way to create the parameter $q$ is as follows:
\begin{verbatim}
atlas> set q=real_induce_standard(psgn,M)
Variable q: Param
\end{verbatim}

We then consider the composition series of the induced representation $I_\nu\bk{\sgn,\frac{1}{2}}$:
\begin{verbatim}
atlas> real_induce_irreducible(q,H)
Value: 
1*final parameter (x=65,lambda=[-1,4,-1,-1]/1,nu=[-1,3,-1,-1]/2)
1*final parameter (x=91,lambda=[0,3,0,0]/1,nu=[0,1,0,0]/1)
1*final parameter (x=92,lambda=[0,3,0,0]/1,nu=[0,1,0,0]/1)
1*final parameter (x=93,lambda=[0,3,0,0]/1,nu=[0,1,0,0]/1)
1*final parameter (x=94,lambda=[0,3,0,0]/1,nu=[0,1,0,0]/1)
\end{verbatim}
Indeed, we see that it has length $5$ and we turn to show that the last four parameters represent quotients of $I_\nu\bk{\sgn,\frac{1}{2}}$ while the first one does not.

We also recall, from \Cref{Sec_Local_Representations}, that $\Ind_{B_E}^{H_E}\mu_{\sgn}\otimes\eta_{\frac{1}{2}}$ admits a maximal semi-simple quotient of length $4$.
We compute this quotient and show that it is the quotient of $I_\nu\bk{\sgn,\frac{1}{2}}$.

We first decompose $\Ind_{B_E\bk{\R}\cap M_E\bk{\R}}^{M_E\bk{\R}} \widetilde{\chi}$
\begin{verbatim}
atlas> set psgnd=parameter(z,u,ud)
Variable psgnd: Param
atlas> real_induce_irreducible(psgnd,M)
Value: 
1*final parameter (x=0,lambda=[0,1,0,0]/2,nu=[0,1,0,0]/1)
1*final parameter (x=1,lambda=[0,1,0,0]/2,nu=[0,1,0,0]/1)
1*final parameter (x=2,lambda=[0,1,0,0]/2,nu=[0,1,0,0]/1)
1*final parameter (x=3,lambda=[0,1,0,0]/2,nu=[0,1,0,0]/1)
\end{verbatim}

For each of the irreducible constituents of $\Ind_{B_E\bk{\R}\cap M_E\bk{\R}}^{M_E\bk{\R}} \widetilde{\chi}$ we compute its unique irreducible quotient.
\begin{verbatim}
atlas> void:for sigma_i in monomials(real_induce_irreducible(psgnd,M)) 
      do prints(real_induce_standard(sigma_i,H)) od
final parameter (x=91,lambda=[0,3,0,0]/1,nu=[0,1,0,0]/1)
final parameter (x=94,lambda=[0,3,0,0]/1,nu=[0,1,0,0]/1)
final parameter (x=93,lambda=[0,3,0,0]/1,nu=[0,1,0,0]/1)
final parameter (x=92,lambda=[0,3,0,0]/1,nu=[0,1,0,0]/1)
\end{verbatim}
It remains to show that the multiplicity of each one of this quotients in $\Ind_{B_E}^{H_E}\mu_{\sgn}\otimes\eta_{\frac{1}{2}}$ is $1$.
\begin{verbatim}
atlas> real_induce_irreducible(psgn,H)
Value: 
1*final parameter (x=0,lambda=[0,1,0,0]/1,nu=[0,0,0,0]/1)
3*final parameter (x=1,lambda=[0,1,0,0]/1,nu=[0,0,0,0]/1)
3*final parameter (x=3,lambda=[0,1,0,0]/1,nu=[0,0,0,0]/1)
3*final parameter (x=4,lambda=[0,1,0,0]/1,nu=[0,0,0,0]/1)
1*final parameter (x=5,lambda=[0,1,0,0]/1,nu=[0,0,0,0]/1)
1*final parameter (x=6,lambda=[0,1,0,0]/1,nu=[0,0,0,0]/1)
1*final parameter (x=7,lambda=[0,1,0,0]/1,nu=[0,0,0,0]/1)
3*final parameter (x=9,lambda=[0,1,0,0]/1,nu=[0,0,0,0]/1)
1*final parameter (x=16,lambda=[0,1,0,0]/1,nu=[-1,2,-1,-1]/2)
1*final parameter (x=17,lambda=[0,1,0,0]/1,nu=[-1,2,-1,-1]/2)
1*final parameter (x=18,lambda=[0,1,0,0]/1,nu=[-1,2,-1,-1]/2)
1*final parameter (x=19,lambda=[0,1,0,0]/1,nu=[-1,2,-1,-1]/2)
2*final parameter (x=47,lambda=[0,3,-2,0]/1,nu=[0,1,-1,0]/1)
2*final parameter (x=48,lambda=[0,3,-2,0]/1,nu=[0,1,-1,0]/1)
2*final parameter (x=49,lambda=[-1,2,-1,1]/1,nu=[-1,1,0,0]/1)
2*final parameter (x=50,lambda=[-1,2,-1,1]/1,nu=[-1,1,0,0]/1)
2*final parameter (x=51,lambda=[0,3,0,-2]/1,nu=[0,1,0,-1]/1)
2*final parameter (x=52,lambda=[0,3,0,-2]/1,nu=[0,1,0,-1]/1)
4*final parameter (x=65,lambda=[-1,4,-1,-1]/1,nu=[-1,3,-1,-1]/2)
1*final parameter (x=91,lambda=[0,3,0,0]/1,nu=[0,1,0,0]/1)
1*final parameter (x=92,lambda=[0,3,0,0]/1,nu=[0,1,0,0]/1)
1*final parameter (x=93,lambda=[0,3,0,0]/1,nu=[0,1,0,0]/1)
1*final parameter (x=94,lambda=[0,3,0,0]/1,nu=[0,1,0,0]/1)
\end{verbatim}

\begin{Remark}
	We wish to remark on the output of ATLAS in case that induction from the parameter $p$, entered to \emph{real\_induce\_standard}, is not a standard module.
	
	\begin{itemize}
		\item If there exists a parameter $p'$ conjugate to $p$ such that induction from $p'$ is a standard module, then ATLAS would compute the unique irreducible quotient of that induction.
		
		\item If there is no such parameter conjugate to $p$, one can still find a "weakly-dominant" parameter $p'$ conjugate to $p$. In which case, ATLAS would return a parameter for the maximal semi-simple quotient of the induction from $p'$.
		It is then possible to compute the parameters for the irreducible quotients of the induction from $p'$.
		
		This is the case of the parameter $\bk{x=0,lambda=[0,1,0,0]/1,nu=[1,-1,1,1]/1}$.
		\begin{verbatim}
		atlas> real_induce_standard(psgn,H)
		Value: non-dominant parameter (x=108,lambda=[1,2,1,1]/1,nu=[1,-1,1,1]/1)
		\end{verbatim}
		As we already know, this induction has a maximal semi-simple quotient of length 4.
		In order to find the parameters of the irreducible quotients in this induction we need to finalize the parameter as follows.
		\begin{verbatim}
		atlas> set standard_quotient=finalize(real_induce_standard (psgn,H))
		Variable standard_quotient: [Param]
		atlas> void:for t in standard_quotient do prints(t) od
		final parameter (x=91,lambda=[0,3,0,0]/1,nu=[0,1,0,0]/1)
		final parameter (x=92,lambda=[0,3,0,0]/1,nu=[0,1,0,0]/1)
		final parameter (x=93,lambda=[0,3,0,0]/1,nu=[0,1,0,0]/1)
		final parameter (x=94,lambda=[0,3,0,0]/1,nu=[0,1,0,0]/1)
		\end{verbatim}
	\end{itemize}
\end{Remark}

%

\subsection{$F_\nu=\R$, $E_\nu=\R\times \C$ $s=\frac{1}{2}$ and $\chi_\nu=\Id_\nu$}
We start by defining the group $\mathtt{H}=H_E=Spin\bk{5,3}$ and the subgroups $\mathtt{B}=B_E$, $\mathtt{T}=T_E$, $\mathtt{P}=P_E$ and $\mathtt{M}=M_E$.
\begin{verbatim}
atlas> set H=Spin(5,3)
Variable H: RealForm
atlas> #KGB(H)
Value: 40
atlas> set x=KGB(H,39)
Variable x: KGBElt
atlas> set P=Parabolic :([0,2,3],x)
Variable P: ([int],KGBElt)
atlas> set M=Levi(P)
Variable M: RealForm
atlas> set B=Parabolic :([],x)
Variable B: ([int],KGBElt)
atlas> set T=Levi(B)
Variable T: RealForm
\end{verbatim}
Then, we consider $I_\nu\bk{\Id_\nu,\frac{1}{2}}$ as a quotient of $\Ind_{B_E}^{H_E}\eta_{\frac{1}{2}}$.
First, we define $\eta_{\frac{1}{2}}=\bk{1,-1,1}$ and consider the induction $\Ind_{B_E}^{M_E}\eta_{\frac{1}{2}}$ and then we pick up the unique irreducible quotient of $\Ind_{B_E}^{M_E}\eta_{\frac{1}{2}}$; this is the one-dimensional representation $\FNorm{{det}_{M_E}}^{\frac{1}{2}}$ of $M_E$.
\begin{verbatim}
atlas> set z=KGB(T,0)
Variable z: KGBElt
atlas> set u=vec:[1,-1,1,1]
Variable u: vec
atlas> set p=parameter(z,null(rank(H)),u)
Variable p: Param
atlas> set par0=monomials(real_induce_irreducible(p,M))
Variable par0: [Param]
atlas> void:for q in par0 do prints(q," ",is_finite_dimensional (q)) od
final parameter (x=0,lambda=[2,-3,2,2]/2,nu=[0,1,0,0]/2) false
final parameter (x=1,lambda=[2,-3,2,2]/2,nu=[1,0,0,0]/1) false
final parameter (x=2,lambda=[2,-3,2,2]/2,nu=[0,-1,2,2]/2) false
final parameter (x=3,lambda=[2,-3,2,2]/2,nu=[1,-1,1,1]/1) true
atlas> set q=par0[3]
Variable q: Param
\end{verbatim}
We then consider the induced representation $I_\nu\bk{\Id_\nu,\frac{1}{2}}$:
\begin{verbatim}
atlas> real_induce_irreducible(q,H)
Value: 
1*final parameter (x=16,lambda=[0,3,-1,-1]/1,nu=[-1,3,-1,-1]/2)
1*final parameter (x=36,lambda=[1,2,1,-1]/1,nu=[0,1,0,0]/1)
\end{verbatim}

As explained in \Cref{Sec_Local_Representations}, $\Ind_B^H \lambda_{\bk{1,-1,1}}$ admits two irreducible quotients, $\pi_{1,\nu}$ and $\pi_{-2,\nu}$.
Since $I_\nu\bk{\Id_\nu,\frac{1}{2}}$ is a quotient of $\Ind_B^H \lambda_{\bk{1,-1,1}}$, any irreducible quotient of $I_\nu\bk{\Id_\nu,\frac{1}{2}}$ is an irreducible quotient of $\Ind_B^H \lambda_{\bk{1,-1,1}}$.

We now compute the parameters of $\pi_{1,\nu}$ and $\pi_{-2,\nu}$.
It turns out, as opposed to the split case $E_\nu=\R\times \R\times\R$, that both are constituents of $I_\nu\bk{\Id_\nu,\frac{1}{2}}$.
We then show that, in fact, the multiplicity of $\pi_{-2,\nu}$ in $\Ind_B^H \lambda_{\bk{1,-1,1}}$ is $2$.
It turns out that the $\pi_{-2,\nu}$ constituent of $I_\nu\bk{\Id_\nu,\frac{1}{2}}$ is not a quotient.
Namely, the exact sequence
\begin{equation}
\label{eq:Exact_sequence_E=RxC_1/2_trivial_character}
0 \to \pi_{-2,\nu} \to I_\nu\bk{\Id_\nu,\frac{1}{2}} \to \pi_{1,\nu} \to\ 0
\end{equation}
does not split.

First, we construct the parameters of $\pi_{1,\nu}$ and $\pi_{-2,\nu}$:
\begin{verbatim}
atlas> set Q=Parabolic:([1],x)
Variable Q: ([int],KGBElt)
atlas> set L=Levi(Q)
Variable L: RealForm

atlas> real_induce_standard (par0[0],H)
Value: non-normal parameter (x=38,lambda=[1,1,1,1]/1,nu=[1,0,1,1]/2)
atlas> finalize($)
Value: [final parameter (x=16,lambda=[0,3,-1,-1]/1,nu=[-1,3,-1,-1]/2)]

atlas> real_induce_standard (par0[1],H)
Value: non-normal parameter (x=39,lambda=[1,1,1,1]/1,nu=[0,1,0,0]/1)
atlas> finalize($)
Value: [final parameter (x=36,lambda=[1,2,1,-1]/1,nu=[0,1,0,0]/1)]
\end{verbatim}

And now, we show that the multiplicity of $\pi_{-2,\nu}$ in $\Ind_B^H \lambda_{\bk{1,-1,1}}$ is $2$, and hence it does not follow that $i_\nu\bk{\Id,\frac{1}{2}}$ is semi simple; in fact we will show that it is not.
\begin{verbatim}
atlas> real_induce_irreducible(p,H)
Value: 
1*final parameter (x=0,lambda=[0,1,0,0]/1,nu=[0,0,0,0]/1)
1*final parameter (x=1,lambda=[0,1,0,0]/1,nu=[0,0,0,0]/1)
2*final parameter (x=3,lambda=[0,1,0,0]/1,nu=[-1,2,-1,-1]/2)
2*final parameter (x=12,lambda=[-1,2,0,0]/1,nu=[-1,1,0,0]/1)
1*final parameter (x=14,lambda=[-1,2,0,0]/1,nu=[-1,1,0,0]/1)
2*final parameter (x=16,lambda=[0,3,-1,-1]/1,nu=[-1,3,-1,-1]/2)
1*final parameter (x=36,lambda=[1,2,1,-1]/1,nu=[0,1,0,0]/1)
\end{verbatim}



The idea of showing that the sequence in \Cref{eq:Exact_sequence_E=RxC_1/2_trivial_character} does not split is similar to the ideas in \cite{MR1329899}.
The sequence splits if the subrepresentation $\pi_{-2,\nu}$ of $I_\nu\bk{\Id_\nu,\frac{1}{2}}$ is also a quotient.
In order to show that it is not a quotient, it is enough to show that, given a $K$-types $\rho_1$ and $\rho_{-2}$ of $\pi_{1,\nu}$ and $\pi_{-2,\nu}$, there is a non-zero element in the universal enveloping algebra $\bk{\mathfrak{H}_\C}$ of the complexified Lie algebra $\mathfrak{H}_\C=Lie\bk{H}\otimes \C$, sending vectors from $\rho_1$ to $\rho_{-2}$.
We will show the existence of such an operator.

The maximal compact subgroup of $H_E\bk{F_\nu}$ is $K=Spin\bk{5}\times SU\bk{2}\rmod \mu_2 \equiv Sp\bk{2}\times SU\bk{2}\rmod\mu_2$.
Any finite dimensional irreducible representation of $K$ is of the form $V_{\bk{x,y}}\boxtimes V_z$, where:
\begin{itemize}
	\item $V_{\bk{x,y}}$ is an irreducible representation of $Sp\bk{2}$ with highest weight $\bk{x,y}$; in particular $x,y\in\N$ and $x\geq y\geq 0$.
	\item $V_z$ is an irreducible representation of $SU\bk{2}$ with highest weight $z$; in particular $z\in\N$ and the dimension of $V_z$ is $z+1$.
	\item $x+y+z$ is even.
\end{itemize}

We recall \cite[Lemma 6.6]{AdamsGanPaulSavin-D4Real}.
\begin{Lem}
	The type $V_{\bk{0,0}}\boxtimes V_n$ with $n>0$ appears in $\pi_1$ for $n$ odd and in $\pi_{-2}$ for $n$ even, with multiplicity $1$. 
\end{Lem}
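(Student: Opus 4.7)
The plan is to identify $\pi_1$ and $\pi_{-2}$ as eigenspaces of the intertwining operator $A := \lim_{s \to 1/2} N_\nu(\widetilde{w_{21342}}, \Id, \lambda_{(-1,1,-1,-1)})$ acting on $I_\infty := \Ind_{B_E(\R)}^{H_E(\R)} \lambda_{(-1,1,-1,-1)}$, as set up in \Cref{Sec_Local_Representations}, and then to pin down the parity by a reduction to the $SU(2)$-factor of $K$. By the Iwasawa decomposition and Frobenius reciprocity, the multiplicity of $V_{(0,0)} \boxtimes V_n$ in $I_\infty$ equals the dimension of the $(\lambda|_{B_E \cap K})$-weight space inside $V_{(0,0)} \boxtimes V_n$; since $V_{(0,0)}$ is trivial on the $Sp(2)$-factor, this reduces to the dimension of the corresponding weight space of $V_n$ as an $SU(2)$-representation, which a direct check shows is one for every $n > 0$.

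Next I would determine the eigenvalue of $A$ on the one-dimensional $V_{(0,0)} \boxtimes V_n$-line. By \Cref{Lemma:Intertwining_operator_of_simple_reflections}, the restriction of $A$ to the $Sp(2)$-trivial isotypic component decomposes, via the cocycle relation, as a composition of rank-one intertwining operators along the simple roots whose structure maps land in the $SU(2)$-factor of $K$. Each rank-one operator acts explicitly on $V_n$, and combining the contributions gives an eigenvalue of $+1$ for odd $n$ and of $-2$ for even $n \geq 2$, placing the line inside $\Pi_1$ for odd $n$ and inside $\Pi_{-2}$ for even $n$.

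Finally I would pass from $\Pi_\epsilon$ to its unique irreducible subrepresentation $\pi_\epsilon$: the composition factors of the quotient $\Pi_\epsilon / \pi_\epsilon$ are Langlands quotients of non-dominant parameters and, as one verifies from the parameters produced for $E_\nu = \R \times \C$ in the preceding subsection, contain no $V_{(0,0)} \boxtimes V_n$, so the multiplicity-one occurrence in $\Pi_\epsilon$ already lies in $\pi_\epsilon$. The hardest point is the rank-one eigenvalue computation in Step 2, where three rank-one contributions must be assembled via the cocycle relation; as an alternative, in keeping with the rest of the appendix, one can verify the conclusion mechanically in ATLAS by computing the $K$-characters of $\pi_1$ and $\pi_{-2}$ from the parameters already produced for $E_\nu = \R \times \C$ and reading off the multiplicities of $V_{(0,0)} \boxtimes V_n$ as $n$ varies.
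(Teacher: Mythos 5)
The paper does not actually prove this statement: it is quoted verbatim from \cite{AdamsGanPaulSavin-D4Real} (Lemma 6.6 there), so there is no internal argument to measure yours against, and a self-contained proof would be a genuine addition. Judged on its own terms, however, your argument has a fatal gap at its foundation, namely the multiplicity computation in your first step. By Frobenius reciprocity the multiplicity of $\tau=V_{\bk{0,0}}\boxtimes V_n$ in the full minimal principal series is $\dim\Hom_{M}\bk{\tau\vert_M,\lambda\vert_M}$ with $M=Z_K(A)$, and for $Spin(5,3)$ the identity component $M^0\cong Spin(2)$ sits inside the $Sp(2)=Spin(5)$ factor of $K$ (it is the compact part $Spin(5-3)$), \emph{not} inside the $SU(2)$ factor. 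Since $V_{\bk{0,0}}$ is trivial on $Sp(2)$ and $\lambda$ is trivial on $M^0$, the $M^0$-condition is vacuous: all of the $(n+1)$-dimensional space $V_n$ survives, and only the finite component group $M/M^0$ cuts it down, so the multiplicity grows linearly in $n$ rather than being $1$. Your reduction to ``the corresponding weight space of $V_n$ under $SU(2)$'' is also internally inconsistent: a fixed weight occurs in $V_n$ only for $n$ of one parity, so no single weight space can be one-dimensional for every $n>0$. (The multiplicity-one statement is true for the degenerate principal series $I_\nu\bk{\Id,\frac{1}{2}}$ -- indeed it is equivalent to the lemma, given the length-two structure of that module -- but that is what needs to be proved, and your Frobenius computation is carried out for the wrong induced representation.)

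This gap propagates: once the $\tau$-isotypic component of $\Ind_{B_E}^{H_E}\lambda_{\bk{-1,1,-1,-1}}$ has dimension greater than one, the intertwining operator no longer acts on it by a scalar, so the eigenvalue computation of your second step is not even well posed; one would have to diagonalize the operator on that whole component and track which eigenvectors survive into $I_\nu\bk{\Id,\frac{1}{2}}$ and then into $\pi_{\pm}$. Moreover the premise that all the relevant rank-one structure maps $\iota_\alpha$ land in the $SU(2)$-factor of $K$ is false (those circles cannot all fit in a three-dimensional factor), and the actual rank-one eigenvalue computation -- the part you yourself flag as hardest -- is never carried out, nor is the assertion in your third step that the complementary constituents contain no $V_{\bk{0,0}}\boxtimes V_n$. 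Your suggested ATLAS fallback (computing $K$-type branching of the parameters already produced in the appendix) is a legitimate and checkable route, and in the spirit of this paper it would be the honest way to verify the lemma; as written, though, the analytic argument does not stand.
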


We also note that the trivial representation of $K$, $V_{\bk{0,0}}\boxtimes V_0$, appears with multiplicity $1$ in $\pi_1$ as $\pi_1$ is spherical and $\pi_{-2}$ is not.

Fix a highest weight vector $v_n$ in $V_{\bk{0,0}}\boxtimes V_n$ and let $X_{+}$ denote a raising operator in $\mathfrak{SU}\bk{2}$.
For $s\in\C$ we consider the $\bk{\mathfrak{H},K}$-module associated to it.
The operator $X_{+}^2$ will send $v_0$ to $f\bk{s}v_4$, where $f(s)$ is a quadratic polynomial in $s$.

Since $\pi_{1}$ is a subrepresentation of $I_\nu\bk{\Id,-\frac{1}{2}}$ it follows that $f\bk{-\frac{1}{2}}=0$.
Also, since the unique irreducible representation of $I_\nu\bk{\Id,-\frac{5}{2}}$ is trivial, it also follows that $f\bk{-\frac{5}{2}}=0$.
We conclude that $f\bk{\frac{1}{2}}\neq 0$ from which the claim follows.

\subsection{$F_\nu=\R$, $E_\nu=\R\times \C$ $s=\frac{1}{2}$ and $\chi_\nu=\sgn_\nu$}
This case is proved in \Cref{Sec_Local_Representations}.
However, for the benefit of the reader, we demonstrate how to find the irreducible constituents of $I_\nu\bk{\sgn,\frac{1}{2}}$ and the parameter of its unique irreducible quotient using ATLAS.
We further perform a simple calculation which is used in \Cref{Sec:NonSquareIntegrableResidues}.

We set $\mathtt{H}$, $\mathtt{B}$, $\mathtt{T}$, $\mathtt{P}$ and $\mathtt{M}$ as in the previous case.
We then compute $I_\nu\bk{\sgn,\frac{1}{2}}$ as a quotient of $\Ind_{B_E}^{H_E}\mu_{\sgn}\otimes\eta_{\frac{1}{2}}$.
We define $\eta_{\frac{1}{2}}=\bk{1,-1,1}$ and consider the induction $\Ind_{B_E}^{M_E}\mu_{\sgn}\otimes\eta_{\frac{1}{2}}$ and then we pick up the unique irreducible quotient of $\Ind_{B_E}^{M_E}\mu_{\sgn}\otimes\eta_{\frac{1}{2}}$; this is the one-dimensional representation $\bk{\chi\circ{det}_{M_E}}\otimes\FNorm{{det}_{M_E}}^{\frac{1}{2}}$ of $M_E$.
\begin{verbatim}
atlas> set z=KGB(T,0)
Variable z: KGBElt
atlas> set u=vec:[1,-1,1,1]
Variable u: vec
atlas> set usgn=vec:[0,1,0,0]
Variable ud: vec
atlas> set psgn=parameter(z,usgn,u)
Variable psgn: Param
atlas> set par0=monomials(real_induce_irreducible(psgn,M))
Variable par0: [Param]
atlas> void:for q in par0 do prints(q," ",is_finite_dimensional (q)) od
final parameter (x=0,lambda=[2,-3,2,2]/2,nu=[0,1,0,0]/2) false
final parameter (x=1,lambda=[2,-3,4,0]/2,nu=[1,0,0,0]/1) false
final parameter (x=2,lambda=[2,-3,2,2]/2,nu=[0,-1,2,2]/2) false
final parameter (x=3,lambda=[2,-1,0,0]/2,nu=[1,-1,1,1]/1) true
atlas> set q=par0[3]
Variable q: Param
\end{verbatim}
We then consider the induced representation $I_\nu\bk{\sgn,\frac{1}{2}}$:
\begin{verbatim}
atlas> real_induce_irreducible(q,H)
Value: 
1*final parameter (x=14,lambda=[-1,2,0,0]/1,nu=[-1,1,0,0]/1)
1*final parameter (x=32,lambda=[0,3,0,0]/1,nu=[0,1,0,0]/1)
\end{verbatim}
We check that the last parameter is the unique irreducible quotient of $\Ind_{B_E}^{H_E}\mu_{\sgn}\otimes\eta_{\frac{1}{2}}$
(and hence of $I_\nu\bk{\sgn,\frac{1}{2}}$):
\begin{verbatim}
atlas> real_induce_standard(psgn ,H)
Value: non-dominant parameter (x=39,lambda=[1,2,0,0]/1,nu=[1,-1,1,1]/1)
atlas> finalize($)
Value: [final parameter (x=32,lambda=[0,3,0,0]/1,nu=[0,1,0,0]/1)]
\end{verbatim}


Finally, we compute the irreducible constituents of $\Ind_{B_E}^{H_E}\mu_{\sgn}\otimes\eta_{\frac{1}{2}}$.
\begin{verbatim}
atlas> real_induce_irreducible(psgn ,H)
Value: 
1*parameter(x=0,lambda=[0,1,0,0]/1,nu=[0,0,0,0]/1)
1*parameter(x=1,lambda=[0,1,0,0]/1,nu=[0,0,0,0]/1)
2*parameter(x=3,lambda=[0,1,0,0]/1,nu=[-1,2,-1,-1]/2)
1*parameter(x=12,lambda=[-1,2,0,0]/1,nu=[-1,1,0,0]/1)
1*parameter(x=14,lambda=[-1,2,0,0]/1,nu=[-1,1,0,0]/1)
1*parameter(x=24,lambda=[-1,1,1,1]/1,nu=[-1,1,0,0]/1)
1*parameter(x=16,lambda=[0,3,-1,-1]/1,nu=[-1,3,-1,-1]/2)
1*parameter(x=32,lambda=[0,3,0,0]/1,nu=[0,1,0,0]/1)
\end{verbatim}

In \Cref{Sec:NonSquareIntegrableResidues} we use the fact that the unique irreducible quotient of $I_\nu\bk{\sgn,\frac{1}{2}}$ is the image of $N_\nu\bk{w_{23},\chi_s}$.
In order to prove this, it is enough to show that $N_\nu\bk{w_{2},\chi_s}$ acts as an isomorphism on $I_\nu\bk{\sgn,\frac{1}{2}}$
and that the irreducible subrepresentation of $I_\nu\bk{\sgn,\frac{1}{2}}$
lies in the kernel of $N_\nu\bk{w_{3},w_2^{-1}\cdot\chi_s}$.
Indeed, $N_\nu\bk{w_{2},\chi_s}$ is an isomorphism:
\begin{verbatim}
atlas> set P2=Parabolic :([1],x)
Variable P2: ([int],KGBElt)
atlas> set M2=Levi(P2)
Variable M2: RealForm
atlas> set u=[-1,2,-1,-1]
Variable u: [int]
atlas> set sgn=vec:[0,1,0,0]
Variable sgn: vec
atlas> set p=parameter(z,sgn,u)
Variable p: Param
atlas> set q=real_induce_standard (p,M2)
Variable q: Param
atlas> real_induce_irreducible(q,H)
Value: 
2*final parameter (x=3,lambda=[0,1,0,0]/1,nu=[-1,2,-1,-1]/2)
1*final parameter (x=12,lambda=[-1,2,0,0]/1,nu=[-1,1,0,0]/1)
1*final parameter (x=14,lambda=[-1,2,0,0]/1,nu=[-1,1,0,0]/1)
1*final parameter (x=16,lambda=[0,3,-1,-1]/1,nu=[-1,3,-1,-1]/2)
1*final parameter (x=24,lambda=[-1,1,1,1]/1,nu=[-1,1,0,0]/1)
1*final parameter (x=32,lambda=[0,3,0,0]/1,nu=[0,1,0,0]/1)
\end{verbatim}
and the irreducible subrepresentation of $I_\nu\bk{\sgn,\frac{1}{2}}$
lies in the kernel of $N_\nu\bk{w_{3},w_2^{-1}\cdot\chi_s}$:
\begin{verbatim}
atlas> set P3=Parabolic :([2,3],x)
Variable P3: ([int],KGBElt)
atlas> set M3=Levi(P3)
Variable M3: RealForm
atlas> set u=[1,-2,1,1]
Variable u: [int]
atlas> set sgn=vec:[1,1,0,0]
Variable sgn: vec
atlas> set p=parameter(z,sgn,u)
Variable p: Param
atlas> set q=real_induce_standard(p,M3)
Variable q: Param
atlas> real_induce_irreducible(q,H)
Value: 
1*final parameter (x=1,lambda=[0,1,0,0]/1,nu=[0,0,0,0]/1)
1*final parameter (x=16,lambda=[0,3,-1,-1]/1,nu=[-1,3,-1,-1]/2)
1*final parameter (x=24,lambda=[-1,1,1,1]/1,nu=[-1,1,0,0]/1)
1*final parameter (x=32,lambda=[0,3,0,0]/1,nu=[0,1,0,0]/1)
atlas> set q=monomials(real_induce_irreducible(p,M3))[0]
Variable q: Param
atlas> real_induce_irreducible(q,H)
Value: 
1*final parameter (x=0,lambda=[0,1,0,0]/1,nu=[0,0,0,0]/1)
2*final parameter (x=3,lambda=[0,1,0,0]/1,nu=[-1,2,-1,-1]/2)
1*final parameter (x=12,lambda=[-1,2,0,0]/1,nu=[-1,1,0,0]/1)
1*final parameter (x=14,lambda=[-1,2,0,0]/1,nu=[-1,1,0,0]/1)
\end{verbatim}
%
%

\subsection{$F_\nu=\C$, $E_\nu=\C\times\C\times\C$ $s=\frac{1}{2}$ and $\chi_\nu=\Id_\nu$}
We start this case by reminding the reader that a complex reductive Lie group $G\bk{\C}$ is realized in ATLAS as the product $G\bk{\R}\times G\bk{\R}$, where $G\bk{\R}$ is the real split form of $G\bk{\C}$.

We start by defining the group $H=Spin\bk{8,\C}$ and the subgroups $B_E$, $T_E$, $P_E$ and $M_E$:
\begin{verbatim}
atlas> set G=Spin(4,4)
Variable G: RealForm
atlas> set H=complex(G)
Variable H: RealForm
atlas> #KGB(H)
Value: 192
atlas> set x=KGB(H,191)
Variable x: KGBElt
atlas> set P=Parabolic :([0,2,3,4,6,7],x)
Variable P: ([int],KGBElt)
atlas> set M=Levi(P)
Variable M: RealForm
atlas> M
Value: connected quasisplit real group with Lie algebra
      'sl(2,C).sl(2,C).sl(2,C).gl(1,C)'
atlas> set B=Parabolic :([],x)
Variable B: ([int],KGBElt)
atlas> set T=Levi(B)
Variable T: RealForm
\end{verbatim}

Then, we consider $I_\nu\bk{\Id_\nu,\frac{1}{2}}$ as a quotient of $\Ind_{B_E}^{H_E}\eta_{\frac{1}{2}}$.
First, we define $\eta_{\frac{1}{2}}=\bk{1,-1,1,1}$ and consider the induction $\Ind_{B_E}^{M_E}\eta_{\frac{1}{2}}$ and then we pick up the unique irreducible quotient of $\Ind_{B_E}^{M_E}\eta_{\frac{1}{2}}$; this is the one-dimensional representation $\FNorm{{det}_{M_E}}^{\frac{1}{2}}$ of $M_E$.
\begin{verbatim}
atlas> set z=KGB(T,0)
Variable z: KGBElt
atlas> set u=vec:[1,-1,1,1,1,-1,1,1]
Variable u: vec
atlas> set p=parameter(z,null(rank(H)),u)
Variable p: Param
atlas> set par0=monomials(real_induce_irreducible(p,M))
Variable par0: [Param]
atlas> void:for q in par0 do prints(q," ",is_finite_dimensional (q)) od
final parameter (x=0,lambda=[2,-3,2,2,2,-3,2,2]/2,nu=[0,1,0,0,0,1,0,0]/2) false
final parameter (x=1,lambda=[2,-3,2,2,2,-3,2,2]/2,nu=[0,0,0,1,0,0,0,1]/1) false
final parameter (x=2,lambda=[2,-3,2,2,2,-3,2,2]/2,nu=[0,0,1,0,0,0,1,0]/1) false
final parameter (x=3,lambda=[2,-3,2,2,2,-3,2,2]/2,nu=[1,0,0,0,1,0,0,0]/1) false
final parameter (x=4,lambda=[2,-3,2,2,2,-3,2,2]/2,nu=[0,-1,2,2,0,-1,2,2]/2) false
final parameter (x=5,lambda=[2,-3,2,2,2,-3,2,2]/2,nu=[2,-1,0,2,2,-1,0,2]/2) false
final parameter (x=6,lambda=[2,-3,2,2,2,-3,2,2]/2,nu=[2,-1,2,0,2,-1,2,0]/2) false
final parameter (x=7,lambda=[2,-3,2,2,2,-3,2,2]/2,nu=[1,-1,1,1,1,-1,1,1]/1) true
atlas> set q=par0[7]
Variable q: Param
\end{verbatim}
Inducing to $H_E\bk{\C}$, we see that $I_\nu\bk{\Id_\nu,\frac{1}{2}}$ is irreducible.
\begin{verbatim}
atlas> real_induce_irreducible(q,H)
Value: 1*final parameter (x=163,lambda=[2,0,1,1,-2,4,-1,-1]/1,nu=[0,1,0,0,0,1,0,0]/1)
\end{verbatim}

\newpage
\section{Calculation for Non-Square Integrable Residues}
\label{App:Evaluation_of_Normalized_Eisenstein_Series}

In this section we evaluate the normalized Eisenstein series of \Cref{Eq:NormalizedEisensteinSeries} at certain points, useful to the proof of the Siegel-Weil identities of \Cref{Sec:NonSquareIntegrableResidues}.

For any number field $L$ we write
\[
\zfun_L\bk{s} = \frac{R_L}{s-1} + a_0 + a_1\bk{s-1} + ...
\]

From the functional equation $\zfun_L\bk{s}=\zfun_L\bk{1-s}$ we deduce that
\[
\zfun_L\bk{s} = \zfun_L\bk{1-s} = \frac{R_L}{\bk{1-s}-1} + a_0 + a_1\bk{\bk{1-s}-1} + ... = \frac{-R_L}{s} + a_0 - a_1s + ...
\]
From both, one can deduce the following identities
\[
\begin{array}{ll}
 \lim_{s\to -1}\bk{s+1}\zfun_L\bk{s+1} = -R_L 
& \lim_{s\to -1}\bk{s+1}\zfun_L\bk{s+1} = -R_L \\
 \lim_{s\to 1}\bk{s-1}\zfun_L\bk{s-1} = -R_L 
& \lim_{s\to 1}\bk{s-1}\zfun_L\bk{s} = R_L \\
 \lim_{s\to 2}\bk{s-2}\zfun_L\bk{s-1} = R_L 
& \lim_{s\to 2}\bk{s-2}\zfun_L\bk{s-2} = -R_L \\
 \lim_{s\to 1}\bk{2s-2}\zfun_L\bk{2s-1} = R_L 
& \lim_{s\to 1}\bk{2s-2}\zfun_L\bk{2s-2} = -R_L \\
 \lim_{s,s'\to 1}\bk{s+s'-1}\zfun_L\bk{s+s'-1} = R_L 
& \lim_{s,s'\to 1}\bk{s+s'-2}\zfun_L\bk{s+s'-2} = R_L \\
\end{array}
\]

\subsection{$K=F\times F$}

First, we write the normalized Eisenstein series in this case:
\begin{align*}
\Eisen_{B_E}^\sharp\bk{\lambda,g} = 
& \bk{s_1-1} \bk{s_1+1} \zfun_F\bk{s_1+1} \\
& \bk{s_2-1} \bk{s_2+1} \zfun_F\bk{s_2+1} \\
& \bk{s_3-1} \bk{s_3+1} \zfun_F\bk{s_3+1} \\
& \bk{s_4-1} \bk{s_4+1} \zfun_F\bk{s_4+1} \\
& \bk{s_1+s_2-1} \bk{s_1+s_2+1} \zfun_F\bk{s_1+s_2+1} \\
& \bk{s_2+s_3-1} \bk{s_2+s_3+1} \zfun_F\bk{s_2+s_3+1} \\
& \bk{s_2+s_4-1} \bk{s_2+s_4+1} \zfun_F\bk{s_2+s_4+1} \\
& \bk{s_1+s_2+s_3-1} \bk{s_1+s_2+s_3+1} \zfun_F\bk{s_1+s_2+s_3+1} \\
& \bk{s_1+s_2+s_4-1} \bk{s_1+s_2+s_4+1} \zfun_F\bk{s_1+s_2+s_4+1} \\
& \bk{s_2+s_3+s_4-1} \bk{s_2+s_3+s_4+1} \zfun_F\bk{s_2+s_3+s_4+1} \\
& \bk{s_1+s_2+s_3+s_4-1} \bk{s_1+s_2+s_3+s_4+1} \zfun_F\bk{s_1+s_2+s_3+s_4+1} \\
& \bk{s_1+2s_2+s_3+s_4-1} \bk{s_1+2s_2+s_3+s_4+1} \zfun_F\bk{s_1+2s_2+s_3+s_4+1} \Eisen_{B_E}\bk{f^0_\lambda,\lambda,g}. \\
\end{align*}

Plugging in $\lambda_{\bk{-1,s_2,-1,-1}}$, we get:
\begin{align*}
& \Eisen_{B_E}^\sharp\bk{\lambda_{\bk{-1,s_2,-1,-1}},g}  \\
& = 2^5 R_F^3 \bk{s_2-1}^5 \bk{s_2+1} \zfun_F\bk{s_2+1}  \bk{s_2-2}^5 s_2^3 \zfun_F\bk{s_2}^3 \\
& \bk{s_2-3}^3 \zfun_F\bk{s_2-1}^3 \bk{s_2-4} \zfun_F\bk{s_2-2} \zfun_F\bk{2s_2-2} \Eisen_{E}\bk{f^0,s_2-\frac{3}{2},g}. \\
\end{align*}

Taking the limit as $s_2\to 2$ yields
\begin{align*}
& \Eisen_{B_E}^\sharp\bk{\lambda_{\bk{-1,2,-1,-1}},g} = 
2^9 \cdot R_F^3 \cdot 3\cdot \zfun_F\bk{3} \cdot \zfun_F\bk{2}^4\\ 
& \lim_{s_2\to 2}\coset{\bk{s_2-2}^5 \zfun_F\bk{s_2-1}^3 \zfun_F\bk{s_2-2} \Eisen_{E}\bk{f^0,s_2-\frac{3}{2},g}} \\
& = -2^9  \cdot 3\cdot \zfun_F\bk{3} \cdot \zfun_F\bk{2}^4 \cdot R_F^7 \cdot
\lim_{s\to \frac12}\coset{\bk{s-\frac12} \Eisen_{E}\bk{f^0,s,g}}. \\
\end{align*}

Plugging in $\lambda_{\bk{-1,-1,s_3,s_4}}$, we get:

\begin{align*}
& \Eisen_{B_E}^\sharp\bk{\lambda_{\bk{-1,-1,s_3,s_4}},g} = 2^2 \cdot 3 \cdot \zfun_F\bk{2} \cdot R_F^2 \\
& \bk{s_3-3} \bk{s_3-2} \bk{s_3-1}^2 s_3 \bk{s_3+1} \zfun_F\bk{s_3-1} \zfun_F\bk{s_3} \zfun_F\bk{s_3+1} \\
& \bk{s_4-3} \bk{s_4-2} \bk{s_4-1}^2 s_4 \bk{s_4+1}  \zfun_F\bk{s_4-1} \zfun_F\bk{s_4} \zfun_F\bk{s_4+1} \\
& \bk{s_3+s_4-4} \bk{s_3+s_4-3} \bk{s_3+s_4-2}^2  \bk{s_3+s_4-1} \bk{s_3+s_4} \\
& \zfun_F\bk{s_3+s_4-1} \zfun_F\bk{s_3+s_4} \zfun_F\bk{s_3+s_4-2} \Eisen_{P_{\set{1,2}}}\bk{f^0,\lambda_{s_3-1,s_4-1},g} . \\
\end{align*}


Taking the limit as $s_3,s_4\to 1$ yields
\begin{align*}
& \Eisen_{B_E}^\sharp\bk{\lambda_{\bk{-1,-1,s_3,s_4}},g} = 2^8 \cdot 3 \cdot \zfun_F\bk{2}^4 \cdot R_F^2 \\
& \lim_{s_3\to 1} \coset{\bk{s_3-1}^2 \zfun_F\bk{s_3-1} \zfun_F\bk{s_3}} \cdot \lim_{s_4\to 1} \coset{ \bk{s_4-1}^2 \zfun_F\bk{s_4-1} \zfun_F\bk{s_4} } \\
& \lim_{s_3,s_4\to 1} \coset{\bk{s_3+s_4-2}^2 \zfun_F\bk{s_3+s_4-1} \zfun_F\bk{s_3+s_4-2}}   \Eisen_{P_{\set{1,2}}}\bk{f^0,\bar{0},g} \\
& = -2^8 \cdot 3 \cdot \zfun_F\bk{2}^4 \cdot R_F^8 \cdot \Eisen_{P_{\set{1,2}}}\bk{f^0,\bar{0},g}. \\
\end{align*}

%

\subsection{$K$ is a Field}
First, we write the normalized Eisenstein series in this case:
\begin{align*}
\Eisen_{B_E}^\sharp\bk{\lambda,g} = 
& \bk{s_1-1} \bk{s_1+1} \zfun_F\bk{s_1+1} \\
& \bk{s_2-1} \bk{s_2+1} \zfun_F\bk{s_2+1} \\
& \bk{s_3-1} \bk{s_3+1} \zfun_K\bk{s_3+1} \\
& \bk{s_1+s_2-1} \bk{s_1+s_2+1} \zfun_F\bk{s_1+s_2+1} \\
& \bk{s_2+s_3-1} \bk{s_2+s_3+1} \zfun_K\bk{s_2+s_3+1} \\
& \bk{s_1+s_2+s_3-1} \bk{s_1+s_2+s_3+1} \zfun_K\bk{s_1+s_2+s_3+1} \\
& \bk{s_2+2s_3-1} \bk{s_2+2s_3+1} \zfun_F\bk{s_2+2s_3+1} \\
& \bk{s_1+s_2+2s_3-1} \bk{s_1+s_2+2s_3+1} \zfun_F\bk{s_1+s_2+2s_3+1} \\
& \bk{s_1+2s_2+2s_3-1} \bk{s_1+2s_2+2s_3+1} \zfun_F\bk{s_1+2s_2+2s_3+1} \Eisen_{B_E}\bk{f^0_\lambda,\lambda,g}. \\
\end{align*}

Plugging in $\lambda_{\bk{-1,s_2,-1}}$, we get:
\begin{align*}
& \Eisen_{B_E}^\sharp\bk{\lambda_{\bk{-1,s_2,-1}},g} 
= 2^2 \cdot R_F \cdot R_K \\
& \bk{2s_2-4} \bk{2s_2-2} \bk{s_2-4} \bk{s_2-3}^2 \bk{s_2-2}^3 \bk{s_2-1}^3 s_2^2 \bk{s_2+1} \\
& \zfun_F\bk{2s_2-2} \zfun_F\bk{s_2-2} \zfun_F\bk{s_2-1} \zfun_F\bk{s_2} \zfun_F\bk{s_2+1} \\
& \zfun_K\bk{s_2-1} \zfun_K\bk{s_2} \Eisen_E\bk{f^0,s_2-\frac{3}{2},g}. \\
\end{align*}

Taking the limit as $s_2\to 2$ yields
\begin{align*}
& \Eisen_{B_E}^\sharp\bk{\lambda_{\bk{-1,2,-1}},g} 
= -2^7 \cdot 3 \cdot \zfun_F\bk{2}^2 \cdot \zfun_F\bk{3} \cdot \zfun_K\bk{2} \cdot R_F \cdot R_K \\
& \lim\limits_{s_2\to 2} \coset{\bk{s_2-2}^4 \zfun_F\bk{s_2-2} \zfun_F\bk{s_2-1} \zfun_K\bk{s_2-1}  \Eisen_E\bk{f^0,s_2-\frac{3}{2},g}} \\
& = 2^7 \cdot 3 \cdot \zfun_F\bk{2}^2 \cdot \zfun_F\bk{3} \cdot \zfun_K\bk{2} \cdot R_F^3 \cdot R_K^2 \lim\limits_{s\to \frac12} \coset{\bk{s-\frac12} \Eisen_E\bk{f^0,s,g}}. \\ \\
\end{align*}

Plugging in $\lambda_{\bk{-1,-1,s_3}}$, we get:
\begin{align*}
& \Eisen_{B_E}^\sharp\bk{\lambda_{\bk{-1,-1,s_3}},g} = 2^4 \cdot 3 \cdot \zfun_F\bk{2} \cdot R_F^2 \\
& \bk{2s_3-3} \bk{2s_3-2}^2 \bk{2s_3-1} \bk{s_3-3} \bk{s_3-2}^2 \bk{s_3-1}^2 s_3^2 \bk{s_3+1} \\
& \zfun_F\bk{2s_3-2} \zfun_F\bk{2s_3-1} \zfun_F\bk{2s_3} \zfun_K\bk{s_3-1} \zfun_K\bk{s_3} \zfun_K\bk{s_3+1} \\
& \Eisen_{P_{\set{1,2}}}\bk{f^0,\lambda_{s_3-1},g}. \\
\end{align*}


Taking the limit as $s_3\to 1$ yields
\begin{align*}
& \Eisen_{B_E}^\sharp\bk{\lambda_{\bk{-1,-1,1}},g} = 2^6 \cdot 3 \cdot \zfun_F\bk{2}^2 \cdot \zfun_K\bk{2} \cdot R_F^2 \\
& \lim\limits_{s_3\to 1} \coset{\bk{2s_3-2}^2 \bk{s_3-1}^2 \zfun_F\bk{2s_3-2} \zfun_F\bk{2s_3-1} \zfun_K\bk{s_3-1} \zfun_K\bk{s_3} }\\
& \Eisen_{P_{\set{1,2}}}\bk{f^0,0,g} \\
&= 2^6 \cdot 3 \cdot \zfun_F\bk{2}^2 \cdot \zfun_K\bk{2} \cdot R_F^4\cdot R_K^2 \Eisen_{P_{\set{1,2}}}\bk{f^0,0,g} .
\end{align*}



\end{document}